\newtheorem{theorem}{Theorem}[section]
\newtheorem{lemma}{Lemma}[section]
\newtheorem{proposition}[lemma]{Proposition}
\newtheorem{definition}[lemma]{Definition}
\newtheorem{remark}[lemma]{Remark}
\newtheorem{problem}{{\bf{Problem}}}[section]
\numberwithin{figure}{section}
\numberwithin{equation}{section}
\begin{document}

\title[Supersonic flows for the Euler-Poisson system]{Three-dimensional Supersonic  flows for the steady Euler-Poisson system in divergent nozzles}

\author{Hyangdong Park}
\address{School of Mathematics, Korea Institute for Advanced Study (KIAS), 85 Hoegi-ro, Dongdaemun-gu, Seoul, 02455, Republic of Korea}
\email{hyangdong@kias.re.kr}

\keywords{angular momentum density, axisymmetric, Euler-Poisson system,  supersonic, vorticity}

\subjclass[2020]{
 35G60, 35J66, 35L72, 35M32, 76J20, 76N10}


\date{\today}

\begin{abstract}
We are concerned with the unique existence of an axisymmetric supersonic solution with nonzero vorticity and nonzero angular momentum density for the steady Euler-Poisson system in three-dimensional divergent nozzles when prescribing the velocity, strength of electric field, and the entropy at the entrance. We first reformulate the problem via the method of the Helmholtz decomposition for three-dimensional axisymmetric flows and obtain a solution to the reformulated problem by the iteration method. Furthermore, we deal carefully with singularity issues related to the polar angle on the axis of the divergent nozzle.
\end{abstract}

\maketitle


\section{Introduction}
The steady Euler-Poisson system
\begin{equation}\label{E-S}
\left\{
\begin{split}
&\mbox{div}(\rho{\bf u})=0,\\
&\mbox{div}(\rho{\bf u}\otimes{\bf u})+\nabla p=\rho\nabla\Phi,\\
&\mbox{div}(\rho\mathcal{E}{\bf u}+p{\bf u})=\rho{\bf u}\cdot\nabla\Phi,\\
&\Delta\Phi=\rho-b,
\end{split}
\right.
\end{equation}
describes a hydrodynamical model of semiconductor devices or plasmas.
In \eqref{E-S}, the functions $\rho=\rho({\bf x})$, ${\bf u}=(u_1,u_2,u_3)({\bf x})$, $p=p({\bf x})$, and $\mathcal{E}=\mathcal{E}({\bf x})$ represent the macroscopic particle electron density, velocity, pressure, and the total energy, respectively, at ${\bf x}=(x_1,x_2,x_3)\in\mathbb{R}^3$.
The function $\Phi=\Phi({\bf x})$ is the electric potential generated by the Coulomb force of particles.
The function $b=b({\bf x})>0$ represents the density of fixed, positively charged background ions. 
In this paper, we consider the case for which the pressure $p$ and the total energy $\mathcal{E}$ are given by 
\begin{equation*}
p(\rho, S)=S\rho^{\gamma}\quad\mbox{and}\quad 
\mathcal{E}(\rho,|{\bf u}|,S)=\frac{|{\bf u}|^2}{2}+\frac{1}{\gamma-1}S\rho^{\gamma},
\end{equation*}
respectively, where the function $S=S({\bf x})$ represents the entropy and the constant $\gamma>1$ is the {\emph {adiabatic constant}}.
Then the system \eqref{E-S} can be rewritten as 
\begin{equation}\label{E-B}
\left\{
\begin{split}
&\mbox{div}(\rho{\bf u})=0,\\
&\mbox{div}(\rho{\bf u}\otimes{\bf u})+\nabla p=\rho\nabla\Phi,\\
&\mbox{div}(\rho{\bf u}B)=\rho{\bf u}\cdot\nabla\Phi,\\
&\Delta\Phi=\rho-b.
\end{split}
\right.
\end{equation}
In the above, the function $B$ is called the {\emph{Bernoulli's function}} which is defined by
\begin{equation}\label{Ber-inv}
B(\rho,|{\bf u}|,S):=\frac{|{\bf u}|^2}{2}+\frac{\gamma }{\gamma-1}S\rho^{\gamma-1}.
\end{equation}
From the system \eqref{E-B}, we can see that
\begin{equation*}
\rho{\bf u}\cdot\nabla S=\rho{\bf u}\cdot\nabla \mathcal{K}=0
\end{equation*}
for the {\emph{pseudo-Bernoulli's invariant}} $\mathcal{K}=\mathcal{K}({\bf x})$ defined by 
\begin{equation*}
\mathcal{K}:=B-\Phi.
\end{equation*}
Since we will consider nonzero entropy $S$, we assume that $\mathcal{K}\equiv 0$  for simplicity throughout the paper.

Let $(r,\phi,\theta)$ be the spherical coordinates of ${\bf x}=(x_1,x_2,x_3)\in\mathbb{R}^3$, that is, 
\begin{equation}\label{s-coord}
(x_1,x_2,x_3)=(r\sin\phi\cos\theta,r\sin\phi\sin\theta,r\cos\phi),\quad r\ge 0,\, \phi\in[0,\pi],\,\theta\in\mathbb{T},
\end{equation}
where $\mathbb{T}$ is a one dimensional torus with period $2\pi$.
%
Let $D$ be an open subset of $\mathbb{R}^3$ and let ${\bf x}=(x_1,x_2,x_3)\in D$.
\begin{definition}
\begin{itemize}
\item[(i)] A function $f:D\to\mathbb{R}$ is {\emph{axisymmetric}} if $f({\bf x})=\tilde{f}(r,\phi)$ for some $\tilde{f}:\mathbb{R}^2\to\mathbb{R}$ so its value is independent of $\theta$.
\item[(ii)] A vector-valued function ${\bf F}:D\to\mathbb{R}^3$ is {\emph{axisymmetric}} if ${\bf F}({\bf x})=F_r(r,\phi){\bf e}_r+F_\phi(r,\phi){\bf e}_\phi+F_{\theta}(r,\phi){\bf e}_\theta$ for some axisymmetric functions $F_r$, $F_\phi$, and $F_\theta$ and an orthonormal basis $\{{\bf e}_r,{\bf e}_{\phi},{\bf e}_\theta\}$ defined by 
\begin{equation*}
\begin{split}
&{\bf e}_r:=(\sin\phi\cos\theta,\sin\phi\sin\theta,\cos\phi),\\
 &{\bf e}_\phi:=(\cos\phi\cos\theta,\cos\phi\sin\theta,-\sin\phi),\\
&{\bf e}_{\theta}:=(-\sin\theta,\cos\theta, 0). 
\end{split}
\end{equation*}
\end{itemize}
\end{definition}
%
%
The goal of this work is to prove
the unique existence of an axisymmetric supersonic ($|{\bf u}|>\sqrt{\gamma S\rho^{\gamma-1}}$) solutions with nonzero vorticity ($\nabla\times{\bf u}\ne 0$)  and nonzero angular momentum density (${\bf u}\cdot{\bf e}_{\theta}\ne 0$) for the steady Euler-Poisson system \eqref{E-B} in a three-dimensional divergent nozzle  $\Omega$ defined by 
\begin{equation*}\label{domain}
\Omega:=\left\{{\bf x}=(x_1,x_2,x_3)\in\mathbb{R}^3:\, 1<r_{\rm en}<r<r_{\rm ex},\,\phi\in[0,\phi_0)\right\}\quad\mbox{(Figure \ref{fig-pro})}
\end{equation*}
for positive constants $r_{\rm en}$, $r_{\rm ex}\in\mathbb{R}$ and $\phi_0\in(0,\pi)$ 
when prescribing the velocity, strength of electric field, and the entropy at the entrance.
The length $L:=r_{\rm ex}-r_{\rm en}$ of the nozzle will be determined later.

\begin{figure}[!h]
\centering
\includegraphics[scale=0.65]{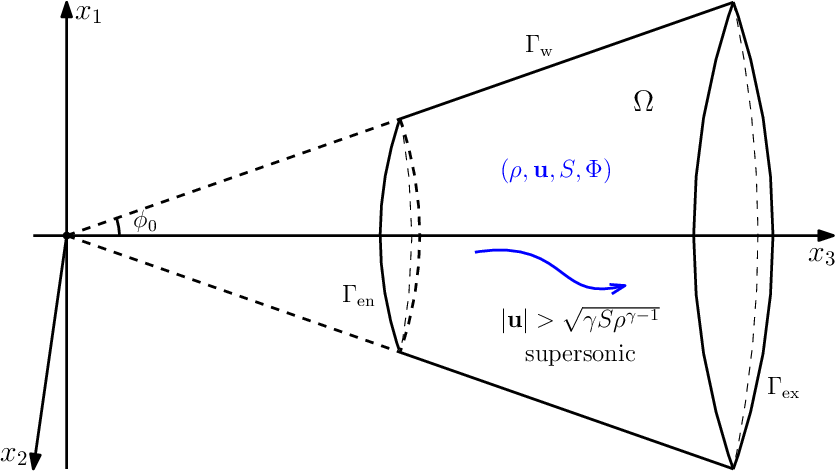}
\caption{Flows in a divergent nozzle $\Omega$}\label{fig-pro}
\end{figure}



Very recently, the stability of   supersonic  solutions to the steady Euler-Poisson system in two-dimensional divergent nozzles and three-dimensional cylinders were studied in \cite{Duan2023super,bae2025supersonic}, respectively.
In \cite{Duan2023super}, the authors studied the stability of potential flows  by  applying the method of \cite{bae2021structural} to the divergent nozzle problem with the standard polar coordinates $(r,\theta)$. 
In this paper, we study flows with nonzero vorticity in a three-dimensional space, so we  deal carefully with singularity issues related with the polar angle on the axis of the divergent nozzle.
%
%
In \cite{bae2025supersonic}, the unique existence of three-dimensional supersonic irrotational solutions in a cylindrical nozzle with an arbitrary cross section and axisymmetric solutions with nonzero vorticity in a circular cylinder were studied.
One of the differences  from \cite{bae2025supersonic} is that here we do not use the method of reflection across the entrance, so we do not assume the zero entrance data of the background electric force. 
Moreover, we do not use a weighted norm to obtain the classical solution. 
We first reformulate the problem via the  Helmholtz decomposition method for axisymmetric flows in divergent nozzles and obtain a unique strong solution with a standard Sobolev norm. 
Then, using the fact that the longitudinal axis is the axis of the time-like variable, we apply the standard elliptic theory for the interior regularity to see that the solution is a classical solution.
The new feature of this work is that it is not only the first to study the three-dimensional divergent nozzle problem, but also proves the stability of the three-dimensional supersonic flows to the steady Euler-Poisson system in a different way from  \cite{bae2025supersonic}. 


To the best of our knowledge, there are very few mathematical results on multi-dimensional transonic shocks for the steady Euler-Poisson system (cf. \cite{gamba1996viscous,luo2011stability,luo2012transonic}).
The ultimate goal of this work is to study the existence and stability of  multi-dimensional transonic shocks for the Euler-Poisson system in various domains. 
The geometry of domains is quite important in study about transonic shocks, because the behavior of the flow depends on it. 
Thus this study will be an important basis for studying transonic shock solutions to the steady Euler-Poisson system in divergent nozzles.
For more information on transonic shocks, one may refer to \cite{chen2018mathematics,courant1999supersonic} and references cited therein.

The rest of the paper is organized  as follows. 
In Section \ref{Sec-Main}, problem and main result are stated. 
In Section \ref{Sec-HD}, the main theorem is restated for the reformulated problem via the method of the Helmholtz decomposition. 
In Section \ref{last-sec}, assuming that a lemma is true, we prove the main theorem.  
In Section \ref{sec-lem-last}, we provide a proof of the lemma assumed in Section 4. 

\section{Problems and Main Theorems}\label{Sec-Main}

\subsection{One-dimensional radial solutions}
Suppose that 
\begin{equation*}
b=\bar{b}(r)
\end{equation*}
for a fixed function $\bar{b}>0$, and suppose that 
\begin{equation*}
{\bf u}=u(r){\bf e}_r,\quad \rho=\rho(r),\quad p=p(r),\quad \Phi=\Phi(r)
\end{equation*}
with $u>0$, $\rho>0$, and $p>0$.
Let us set 
\begin{equation*}
E:=\Phi',
\end{equation*}
where $'$ denotes the derivative with respect to $r$.
Then the Euler-Poisson system \eqref{E-B} can be rewritten as follows:
\begin{equation}\label{F-sys}
\left\{\begin{split}
&(r^2\rho u)'=0,\\
&(r^2\rho u^2)'+r^2p'=r^2\rho E,\\
&(r^2\rho  u B)'=r^2\rho u E,\\
&(r^2E)'=r^2(\rho-\bar{b}).
\end{split}\right.
\end{equation}
From \eqref{F-sys}, one can see that the following properties hold:
\begin{itemize}
\item $r^2\rho u=m_0\mbox{ for a constant $m_0>0$;}$
\item $S=S_0\mbox{ for a constant $S_0>0$;}$
\item  $B'=E$.
\end{itemize}
%
Then the system \eqref{F-sys} can be rewritten as an ODE system for $(\rho, E)$ as follows:
\begin{equation}\label{ODE-RE}
\left\{\begin{split}
&\rho'=g_1(r,\rho, E),\\
&E'=g_2(r,\rho,E)
\end{split}\right.
\end{equation}
for $g_1$ and $g_2$ defined by 
\begin{equation*}
\begin{split}
&g_1(r,\rho,E):=\left(r^2\rho E+\frac{2m_0^2}{r^3\rho}\right)\left(r^2\gamma S_0\rho^{\gamma-1}-\frac{m_0^2}{r^2\rho^2}\right)^{-1},\\
&g_2(r,\rho,E):=(\rho-\bar{b})-\frac{2E}{r}.
\end{split}
\end{equation*}
Equivalently, the system \eqref{F-sys} can be rewritten as an ODE system for $(M:=|u|/\sqrt{\gamma S\rho^{\gamma-1}}, E)$ as follows: 
\begin{equation}\label{ODE-ME}
\left\{\begin{split}
&(M^2)'=h_1(r,M,E),\\
&(r^2E)'=h_2(r,M)
\end{split}\right.
\end{equation}
for
\begin{equation*}
\begin{split}
&h_1(r,M,E):=\frac{M^2}{M^2-1}\left[\frac{2}{r}(2+(\gamma-1)M^2)+\frac{1}{m_0^2}(\gamma+1)\kappa_0^2(r^4M^2)^{\frac{\gamma-1}{\gamma+1}}E\right],\\
&h_2(r,M):=r^2\left(\kappa_0\left(\frac{1}{r^4M^2}\right)^{\frac{1}{\gamma+1}}-\bar{b}\right)\quad\mbox{with }\kappa_0:=\left(\frac{m_0^2}{\gamma S_0}\right)^{\frac{1}{\gamma+1}}.
\end{split}
\end{equation*}
One may refer to \cite[Section 2]{bae2018radial} for detailed derivations.

For the ODE system \eqref{ODE-RE} (or \eqref{ODE-ME}), we 
assume that $\bar{b}\equiv b_0$ for a fixed constant $b_0>0$, and we consider the following initial conditions:
\begin{equation}\label{ini-con-super}
(\rho,  E)(r_{\rm en})=(\rho_0,  E_0)\,\,\mbox{ or }\,\, (E,M)(r_{\rm en})=(E_0, M_0)
\end{equation}%
for constants $\rho_0$, $E_0$, and $M_0$ satisfying
\begin{equation}\label{ini-con-super-1}
\begin{split}
&0<\rho_0<\left(\frac{m_0^2}{\gamma r_{\rm en}^4S_0}\right)^{\frac{1}{\gamma+1}}=:\rho_s,\\
&M_0=\frac{m_0}{r_{\rm en}^2\rho_0\sqrt{\gamma S_0\rho_0^{\gamma-1}}}>1,\,\,\mbox{ and }\,\,0<\bar{b}(r)=b_0<\kappa_0\left(\frac{1}{r^4M_0^2}\right)^{\frac{1}{\gamma+1}}.
\end{split}
\end{equation}

\begin{lemma}\label{super-lemma}
For given $\gamma>1$, $r_{\rm en}>1$, $m_0>0$, and $S_0>0$, suppose that constants $\rho_0$, $E_0$, and $M_0$ satisfy \eqref{ini-con-super-1}. 
For a given constant $\bar{\delta}>0$ sufficiently small,
there exists a constant $r_{\ast}\in(r_{\rm en},\infty)$ depending on $r_{\rm en}$, $\gamma$, $\bar{b}$, $m_0$, $S_0$, $\rho_0$, $E_0$, $M_0$, and $\bar{\delta}$ such that 
the initial value problem \eqref{ODE-RE}(or \eqref{ODE-ME}) with \eqref{ini-con-super} 
admits a unique smooth supersonic solution $(\rho, E)(r)\in [C^{\infty}([r_{\rm en}, r_{\ast}])]^2$ 
satisfying
\begin{equation*}
\bar{\delta}\le \rho(r)\le \rho_s-\bar{\delta}\mbox{ for }r\in[r_{\rm en},r_{\ast}]
\end{equation*}
for $\rho_s$ in \eqref{ini-con-super-1}.
%
\end{lemma}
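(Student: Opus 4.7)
The plan is to derive Lemma \ref{super-lemma} as a direct application of classical ODE theory (Picard--Lindel\"of plus continuation) to the smooth vector field on the right-hand side of \eqref{ODE-RE}. First I would observe that $g_1,g_2$ are smooth (indeed real-analytic in $(\rho,E)$ and $C^\infty$ in $r$) on the open ``supersonic set''
\[
U := \{(r,\rho,E) : r>0,\, 0<\rho<\rho_{\star}(r)\}, \qquad \rho_{\star}(r) := \left(\frac{m_0^2}{\gamma S_0 r^4}\right)^{1/(\gamma+1)},
\]
because the denominator of $g_1$ can be written $\rho^{-2}\bigl[r^2\gamma S_0\rho^{\gamma+1}-m_0^2/r^2\bigr]$, which vanishes exactly on the sonic curve $\rho=\rho_{\star}(r)$. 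The hypothesis \eqref{ini-con-super-1} places the initial datum strictly inside $U$ (since $0<\rho_0<\rho_s=\rho_{\star}(r_{\rm en})$), so Picard--Lindel\"of produces a unique $C^\infty$ solution on a maximal forward interval $[r_{\rm en},r_{\max})$ with graph in $U$.

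Next I would choose $\bar{\delta}>0$ so small that $2\bar{\delta}<\rho_0$ and $\rho_0<\rho_s-2\bar{\delta}$, and pick $\eta_0>0$ with $\rho_{\star}(r)>\rho_s-\bar{\delta}/2$ on $[r_{\rm en},r_{\rm en}+\eta_0]$, which is possible by continuity of $\rho_{\star}$ with $\rho_{\star}(r_{\rm en})=\rho_s$. Then the compact set
\[
K := \{(r,\rho,E) : r\in[r_{\rm en},r_{\rm en}+\eta_0],\; \bar{\delta}/2\le\rho\le\rho_s-\bar{\delta}/2,\; |E-E_0|\le 1\}
\]
is contained in $U$, so $g_1$ and $g_2$ are bounded on $K$ by some $C>0$ depending only on the parameters listed in the lemma. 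Integrating the ODE then gives $|\rho(r)-\rho_0|+|E(r)-E_0|\le C(r-r_{\rm en})$ as long as the graph remains in $K$, and setting $r_\ast:=r_{\rm en}+\min(\eta_0,\bar{\delta}/(2C),1/(2C))$ forces the solution to stay strictly inside $K$ on $[r_{\rm en},r_\ast]$ by a standard continuation argument. This yields in particular the required bound $\bar{\delta}\le\rho\le\rho_s-\bar{\delta}$.

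The main subtlety, and essentially the only obstacle, is that $\rho_s$ is the sonic density at the fixed radius $r_{\rm en}$, whereas the true sonic density $\rho_{\star}(r)$ strictly decreases in $r$. Consequently, the bound $\rho(r)\le\rho_s-\bar{\delta}$ does not by itself keep the solution a definite distance from the sonic curve for arbitrarily large $r$, which is precisely why the length $r_\ast-r_{\rm en}$ must be chosen small in terms of $\bar{\delta}$; it is also the reason one cannot hope to extend the solution indefinitely by this argument alone. Apart from this point, uniqueness and $C^\infty$ regularity are immediate from Picard--Lindel\"of and the smoothness of the vector field $(g_1,g_2)$ on $U$. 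The equivalent formulation \eqref{ODE-ME} for $(M,E)$ is handled by the same argument after converting the sonic constraint $\rho<\rho_{\star}(r)$ into $M>1$.
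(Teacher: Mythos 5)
Your proof is correct. Note that the paper does not actually supply a proof of this lemma; it simply remarks that the argument is analogous to the ones in \cite[Lemma~1.1]{bae2021structural}, \cite{luo2012transonic}, and \cite[Lemma~1.1]{Duan2023super}, and skips the details. Your self-contained argument---identifying the open supersonic set $U$ on which $(g_1,g_2)$ are smooth (the denominator of $g_1$ vanishing exactly on the sonic curve $\rho=\rho_\star(r)$), invoking Picard--Lindel\"of, bounding the vector field on a compact $K\subset U$ that keeps a margin away from both $\rho=0$ and the decreasing sonic curve, and then shrinking the interval length so the solution cannot exit $K$ by continuation---is exactly the standard local existence argument one would expect those references to employ, so it fills the gap the paper leaves to the reader. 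Your explicit remark that $\rho_s=\rho_\star(r_{\rm en})$ is the sonic density only at the entrance, while $\rho_\star(r)$ strictly decreases in $r$, is the right subtlety to flag: it is precisely why $r_\ast-r_{\rm en}$ must shrink with $\bar\delta$ and why one cannot extend indefinitely by this argument. The bookkeeping also closes: from $2\bar\delta<\rho_0<\rho_s-2\bar\delta$ and $|\rho(r)-\rho_0|\le C(r-r_{\rm en})\le\bar\delta/2$ you obtain $\rho(r)\in(3\bar\delta/2,\rho_s-3\bar\delta/2)\subset[\bar\delta,\rho_s-\bar\delta]$, and $|E(r)-E_0|\le 1/2<1$ keeps the graph strictly interior to $K$, so the continuation argument indeed applies. (The hypothesis $b_0<\kappa_0(r^4M_0^2)^{-1/(\gamma+1)}$ in \eqref{ini-con-super-1} is not needed for this local existence statement, which is consistent with your not using it; it matters for structural/monotonicity properties of the radial solution established elsewhere.)
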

The proof of Lemma \ref{super-lemma} can be obtained by a similar way of \cite[Lemma 1.1]{bae2021structural}, \cite{luo2012transonic}, and \cite[Lemma 1.1]{Duan2023super}. So we skip it.

\begin{remark}\label{rem}
In \cite{Duan2023super}, a parameter set of initial data for \eqref{ini-con-super} was required because the authors computed Sobolev norm estimates in polar coordinates. However, it is not required if we compute the estimates in cartesian coordinates.
\end{remark}

\begin{definition}
For the unique solution $(\bar{\rho},\bar{E})$ in Lemma \ref{super-lemma},
define a background solution ($\bar{\varphi},\bar{\Phi})$ by 
\begin{equation*}
\begin{split}
&\bar{\varphi}({\bf x}):=\int_{r_{\rm en}}^{|{\bf x}|}\frac{m_0}{t^2\bar{\rho}(t)} dt+\frac{m_0}{r_{\rm en}^2\rho_0}\quad\mbox{for}\quad r_{\rm en}<|{\bf x}|<r_{\ast},\\
&\bar{\Phi}({\bf x}):=\int_{r_{\rm en}}^{|{\bf x}|} \bar{E}(t) dt+E_0\quad\mbox{for}\quad r_{\rm en}<|{\bf x}|<r_{\ast}.
\end{split}
\end{equation*}

\end{definition}
%
%
The goal of this work is to  prove the unique existence of the three-dimensional axisymmetric supersonic solution with nonzero vorticity and nonzero angular momentum density to the steady Euler-Poisson system \eqref{E-B} in a divergent nozzle $\Omega$ under some small perturbations of the background solution.

\subsection{Problems and Theorems}
We recall the definition of the divergent nozzle $\Omega$:
\begin{equation*}\label{domain}
\Omega:=\left\{{\bf x}=(x_1,x_2,x_3)\in\mathbb{R}^3:\, 1<r_{\rm en}<r<r_{\rm ex},\,\phi\in[0,\phi_0),\,\theta\in \mathbb{T}\right\}
\end{equation*}
for the spherical coordinates $(r,\phi,\theta)$ of ${\bf x}$ in \eqref{s-coord}.
For notational simplicity,
let us set the entrance $\Gamma_{\rm en}$, wall $\Gamma_{\rm w}$, and the exit $\Gamma_{\rm ex}$ of $\Omega$ as follows:
\begin{equation*}
\Gamma_{\rm en}:=\partial\Omega\cap\{r=r_{\rm en}\},\quad
\Gamma_{\rm w}:=\partial\Omega\cap\{\phi=\phi_0\},\quad
\Gamma_{\rm ex}:=\partial\Omega\cap\{r=r_{\rm ex}\}.
\end{equation*}

Suppose that  ${\bf u},$ $\rho$, $p$, and $\Phi$ are axisymmetric, i.e.,
\begin{equation*}
{\bf u}=u_r(r,\phi){\bf e}_r+u_{\phi}(r,\phi){\bf e}_{\phi}+u_{\theta}(r,\phi){\bf e}_{\theta},\,\, \rho=\rho(r,\phi),\,\, p=p(r,\phi),\,\,\Phi=\Phi(r,\phi).
\end{equation*}
Then, obviously, the entropy $S=p\rho^{-\gamma}$ is also axisymmetric.
Define the angular momentum density $\Lambda$ of the flow by 
\begin{equation}\label{def-Lambda}
\Lambda(r,\phi):=(r\sin\phi)u_{\theta}(r,\phi).
\end{equation}
Then we can rewrite the Euler-Poisson system \eqref{E-B} as a nonlinear system for $(\rho, u_r, u_{\phi}, \Lambda, S,\Phi)$ as follows:
\begin{equation}\label{Axi-sys}
\left\{\begin{split}
&\partial_r(r^2\sin\phi\rho u_r)+\partial_{\phi}(r\sin\phi\rho u_{\phi})=0,\\
&\rho u_r\partial_r u_{\phi}+\frac{\rho u_{\phi}\partial_{\phi}u_{\phi}}{r}+\frac{\partial_{\phi}(S\rho^{\gamma})}{r}+\frac{\rho u_r u_{\phi}}{r}-\frac{\rho}{r}\left(\frac{\Lambda}{r\sin\phi}\right)^2\cot\phi=\frac{\rho\partial_{\phi}\Phi}{r},\\
&(r^2\sin\phi\rho u_r) \partial_rS+(r\sin\phi\rho u_{\phi})\partial_{\phi}S={ 0},\\
&(r^2\sin\phi\rho u_r) \partial_r\Lambda+(r\sin\phi\rho u_{\phi})\partial_{\phi}\Lambda={ 0},\\
&\frac{1}{r^2}\partial_r(r^2\partial_r\Phi)+\frac{1}{r^2\sin\phi}\partial_{\phi}(\sin\phi\partial_{\phi}\Phi)=\rho-b.
\end{split}\right.
\end{equation}

\begin{problem}\label{Pro}
For given  functions $b=b(r,\phi)$ and $$(u_{\rm en},v_{\rm en}, w_{\rm en},S_{\rm en},E_{\rm en},\Phi_{\rm ex})=(u_{\rm en},v_{\rm en}, w_{\rm en},S_{\rm en},E_{\rm en},\Phi_{\rm ex})(\phi),$$
suppose that the following compatibility conditions hold:
\begin{equation}\label{super-comp}
\begin{split}
\partial_{\phi}b=0\mbox{ on }\Gamma_{\rm w},\quad
w_{\rm en}(0)=0,
\quad \frac{d\Phi_{\rm ex}}{d\phi}=0 \mbox{ on }\overline{\Gamma_{\rm ex}}\cap\{\phi=\phi_0\},\\
\frac{du_{\rm en}}{d\phi}=\frac{dv_{\rm en}}{d\phi}=\frac{dw_{\rm en}}{d\phi}
=\frac{dS_{\rm en}}{d\phi}=\frac{dE_{\rm en}}{d\phi}
=0 \mbox{ on }\overline{\Gamma_{\rm en}}\cap\{\phi=\phi_0\}.
\end{split}
\end{equation}
Find $r_{\rm ex}\in(r_{\rm en},r_{\ast})$ for $r_{\ast}$ in Lemma \ref{super-lemma} and an axisymmetric solution $U=(\rho, u_r, u_{\phi}, \Lambda, S,\Phi)$ to \eqref{Axi-sys} in $\Omega$ such that the following properties hold:
\begin{itemize}
\item[(i)] 
In $\overline{\Omega}$, 
	\begin{itemize}
	\item[$\cdot$] $\rho>0$ and $u_r>0$;
	\item[$\cdot$] $|{\bf u}|=\sqrt{u_r^2+u_{\phi}^2+\left(\frac{\Lambda}{r\sin\phi}\right)^2}>c$ for the sound speed $c:=\sqrt{\gamma S\rho^{\gamma-1}}$.
	\end{itemize}
\item[(ii)] $U$ satisfies the following physical boundary conditions:
\begin{equation}\label{sup-bd}
\left\{\begin{split}
(S, \Lambda)=( S_{\rm en},r_{\rm en}\sin\phi w_{\rm en})\,\,&\mbox{on}\,\,\Gamma_{\rm en},\\
(u_r,u_{\phi})=({u}_{\rm en},v_{\rm en}),\,\,\nabla\Phi\cdot{\bf e}_r=E_{\rm en}\,\,&\mbox{on}\,\,\Gamma_{\rm en},\\
u_{\phi}=0,\,\,\nabla\Phi\cdot{\bf e}_{\phi}=0\,\,&\mbox{on}\,\,\Gamma_{\rm w},\\
\nabla\Phi\cdot{\bf e}_r=\Phi_{\rm ex}\,\,&\mbox{on}\,\,\Gamma_{\rm ex}.
\end{split}\right.
\end{equation}
\end{itemize}
\end{problem}

To state our main results, we introduce a Sobolev norm: 
\begin{definition}
For each $k\in\mathbb{N}$ and a function $u:\Omega\to\mathbb{R}$, define a Sobolev norm $\|\cdot\|_{H^k_{\ast}(\Omega)}$ by 
\begin{equation*}
\|u\|_{H^k_{\ast}(\Omega)}:=\|u\|_{H^{k-1}(\Omega)}+\|\partial_ru\|_{H^{k-1}(\Omega)}.
\end{equation*}
For a vector-valued function ${\bf u}=(u_1,\cdots, u_n):\Omega\to\mathbb{R}^n$, we define
\begin{equation*}
\|{\bf u}\|_{H^k_{\ast}(\Omega)}:=\sum_{j=1}^n\|u_j\|_{H^k_{\ast}(\Omega)}.
\end{equation*}
Define $H^k_{\ast}(\Omega)$ and $H^k_{\ast}(\Omega;\mathbb{R}^n)$ by 
\begin{equation*}
\begin{split}
&H^k_{\ast}(\Omega):=\left\{u:\Omega\to\mathbb{R}:\|u\|_{H^k_{\ast}(\Omega)}<\infty\right\},\\
&H^k_{\ast}(\Omega;\mathbb{R}^n):=\left\{{\bf u}=(u_1,\cdots,u_n):\Omega\to\mathbb{R}^n:\sum_{j=1}^n\|u_j\|_{H^k_{\ast}(\Omega)}<\infty\right\}.
\end{split}
\end{equation*}
\end{definition}

Now we state the main theorem.



\begin{theorem}\label{Thm2.2}  Fix $\gamma>1$. 
For given  functions $b=b(r,\phi)\in C^2(\overline{\Omega})$ and 
\begin{equation*}
\begin{split}
(u_{\rm en},v_{\rm en}, w_{\rm en},S_{\rm en},E_{\rm en},\Phi_{\rm ex})
&=(u_{\rm en},v_{\rm en}, w_{\rm en},S_{\rm en},E_{\rm en},\Phi_{\rm ex})(\phi)\\
&\in C^3(\overline{\Gamma_{\rm en}})\times [C^4(\overline{\Gamma_{\rm en}})]^4\times C^4(\overline{\Gamma_{\rm ex}}),
\end{split}
\end{equation*}
suppose that the compatibility conditions in \eqref{super-comp} hold.
For simplicity of notations, let us set ${\bm\tau}_p$ as
\begin{equation*}
\begin{split}
{\bm\tau}_p:=&\|b-\bar{b}\|_{C^2(\overline{\Omega})}+\|u_{\rm en}-\frac{m_0}{r_{\rm en}^2\rho_0}\|_{C^3(\overline{\Gamma_{\rm en}})}+\|\Phi_{\rm ex}-\bar{\Phi}\|_{C^4(\overline{\Gamma_{\rm ex}})}
\\
&+\|v_{\rm en}\|_{C^4(\overline{\Gamma_{\rm en}})}
+\|w_{\rm en}\|_{C^4(\overline{\Gamma_{\rm en}})}
+\|S_{\rm en}-S_0\|_{C^4(\overline{\Gamma_{\rm en}})}
+\|E_{\rm en}-E_0\|_{C^4(\overline{\Gamma_{\rm en}})}.
\end{split}
\end{equation*}
There exist $r_{\rm ex}\in(r_{\rm en},r_{\ast})$ for $r_{\ast}$ in Lemma \ref{super-lemma} 
and a small constant ${\bm\tau}^{\star}>0$ depending only on $\gamma$, $\bar{b}$,  $m_0$, $\rho_0$, $\bar{\varphi}$, $\bar{\rho}$, $\bar{\Phi}$, $S_0,$ $E_0$,  $r_{\rm en}$, $r_{\rm ex},$ $\bar{\delta}$, and $\phi_0$ so that if 
\begin{equation*}
{\bm\tau}_p\le{\bm\tau}^{\star},
\end{equation*}
then Problem \ref{Pro} has a unique axisymmetric  solution $({\bf u},\rho, S,\Phi)$ that satisfies the estimate 
\begin{equation}\label{super-est-thm}
\|{\bf u}-\nabla\bar{\varphi}\|_{H_{\ast}^3(\Omega;\mathbb{R}^3)}
+\|\rho-\bar{\rho}\|_{H_{\ast}^3(\Omega)}
+\|S-S_0\|_{H_{\ast}^4(\Omega)}
+\|\Phi-\bar{\Phi}\|_{H^4_{\ast}(\Omega)}\le C{\bm\tau}_p
\end{equation}
for a constant $C>0$ depending only on $\gamma$, $\bar{b}$,  $m_0$, $\rho_0$, $\bar{\varphi}$, $\bar{\rho}$, $\bar{\Phi}$, $S_0,$ $E_0$,  $r_{\rm en}$, $r_{\rm ex},$ $\bar{\delta}$, and $\phi_0$.
\end{theorem}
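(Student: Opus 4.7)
The approach I would take is to reformulate the problem via the Helmholtz decomposition and then set up a nonlinear Picard-type iteration. Writing ${\bf u} = \nabla\varphi + {\bf W}$ in an axisymmetric manner (so that ${\bf W}$ is determined by $\Lambda/(r\sin\phi)$ together with a stream function associated with the vorticity), the steady Euler-Poisson system \eqref{Axi-sys} decouples into (i) first-order transport equations for $S$, $\Lambda$, and for the vorticity components along streamlines, and (ii) a coupled second-order system for $(\varphi, \Phi)$ that behaves as a quasilinear equation with $r$ as the time-like variable and is elliptic in the transverse angular directions, thanks to the supersonic character of the background flow. The background state $(\bar\varphi, \bar\Phi, \bar\rho, S_0)$ with $\Lambda \equiv 0$ and ${\bf W} \equiv 0$ provides the zeroth iterate.

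The core of the argument is a linear existence lemma (the one invoked in Section \ref{last-sec} and proved in Section \ref{sec-lem-last}) for the $(\varphi, \Phi)$ system with prescribed forcing terms coming from lower-order quantities. The $H^k_{\ast}$ norm is tailored exactly to this situation: since $r$ is time-like for $\varphi$, one expects $\partial_r\varphi$ to have one fewer derivative than transverse derivatives, and $\|\cdot\|_{H^k_{\ast}}$ encodes precisely that asymmetry, while the elliptic operator for $\Phi$ gains the full $H^k$ regularity. Granted this lemma, I would run the iteration as follows: given $U^{(n)} = (\rho^{(n)}, {\bf u}^{(n)}, S^{(n)}, \Phi^{(n)})$, first integrate the transport equations to get $S^{(n+1)}, \Lambda^{(n+1)}$, and the rotational piece ${\bf W}^{(n+1)}$ along the streamlines of ${\bf u}^{(n)}$ from $\Gamma_{\rm en}$; these characteristics remain globally well-defined across $\Omega$ because ${\bf u}^{(n)}$ stays nearly radial and supersonic once ${\bm\tau}_p$ is small and $L = r_{\rm ex} - r_{\rm en}$ is chosen suitably. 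Next, solve the linearized system of the lemma for $(\varphi^{(n+1)}, \Phi^{(n+1)})$, and finally recover $\rho^{(n+1)}$ from $\mathcal{K} \equiv 0$, which expresses $\rho$ as a function of $|{\bf u}|^2/2 - \Phi$ and $S$. Closing the estimates in $H^3_{\ast}$ for ${\bf u}$ and $\rho$ and in $H^4_{\ast}$ for $S$ and $\Phi$ and demonstrating contraction for small ${\bm\tau}_p$ yields the unique strong solution of \eqref{super-est-thm}. To upgrade to a classical solution, I would use interior elliptic regularity for the Poisson equation $\Delta\Phi = \rho - b$ in Cartesian coordinates together with characteristic regularity for $\varphi$, exploiting that the longitudinal axis is time-like.

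The hardest part will be handling the coordinate singularity at the axis $\phi = 0$: the coefficients $1/\sin\phi$ and $\cot\phi$ in the $\phi$-component of the momentum equation are singular there, and $\Lambda$ must vanish on the axis compatibly with $w_{\rm en}(0) = 0$. A clean resolution, as hinted at in Remark \ref{rem}, is to carry out all Sobolev estimates in Cartesian coordinates, so that axisymmetric norms do not see spurious polar singularities; the spherical-coordinate representation is then used only for formulating boundary conditions, stating the transport/characteristic structure, and identifying the supersonic time-like direction. Equally delicate is ensuring that the compatibility conditions in \eqref{super-comp} at the corners $\overline{\Gamma_{\rm en}}\cap \{\phi = \phi_0\}$ and $\overline{\Gamma_{\rm ex}}\cap\{\phi = \phi_0\}$ and on the axis are preserved throughout the iteration, so that the iterates remain genuinely axisymmetric and smooth up to $\phi = 0$; this is what forces the slightly higher regularity on $v_{\rm en}, w_{\rm en}, S_{\rm en}, E_{\rm en}, \Phi_{\rm ex}$ than on $u_{\rm en}$ in the statement.
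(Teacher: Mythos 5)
Your high-level architecture (Helmholtz decomposition, transport for conserved scalars, a linear lemma for a $(\varphi,\Phi)$ system with $r$ time-like, fixed-point closure, interior regularity to upgrade to a classical solution, reflection across $\Gamma_{\rm w}$) agrees with the paper in spirit. However, there is a genuine structural gap in how you propose to recover the rotational part. You write that you would "integrate the transport equations to get $S^{(n+1)}, \Lambda^{(n+1)}$, and the rotational piece ${\bf W}^{(n+1)}$ along the streamlines of ${\bf u}^{(n)}$." This cannot work: the vector potential ${\bf W}=\psi{\bf e}_\theta$ is not conserved along streamlines and is not obtained by characteristic integration. What is transported are the scalars $S$ and $\Lambda$; their transverse derivatives $\partial_\phi S$ and $\partial_\phi\Lambda$ enter the source $G$ in the \emph{elliptic} vector Poisson problem $-\Delta(\psi{\bf e}_\theta)=G\,{\bf e}_\theta$ of \eqref{W-super}, which must be solved with the carefully chosen boundary conditions $\nabla(\psi{\bf e}_\theta)\cdot{\bf e}_r+\frac{\psi{\bf e}_\theta}{r}={\bf 0}$ on $\Gamma_{\rm en}$ and $\psi{\bf e}_\theta={\bf 0}$ on $\Gamma_{\rm w}\cup\Gamma_{\rm ex}$ precisely so that the Helmholtz decomposition is compatible with the physical velocity boundary conditions in \eqref{sup-bd}. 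Without this elliptic solve and these boundary conditions, the decomposition ${\bf u}=\nabla\varphi+\mbox{curl}\,{\bf V}$ does not close. The paper in fact organizes this as a nested Schauder fixed point: an inner iteration in $\mathcal J(\delta_3)$ for $\psi{\bf e}_\theta$ inside Proposition \ref{su-lemma-1}, and an outer iteration in $\mathcal J(\delta_1)$ for $(S,\Lambda)$.

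A second, smaller misalignment concerns your resolution of the axis singularity. You propose to "carry out all Sobolev estimates in Cartesian coordinates" so that the $1/\sin\phi$ and $\cot\phi$ coefficients never appear. The paper instead performs the key higher-order estimates (Lemma \ref{Lemma27} and Lemma \ref{lem28}) by a Galerkin expansion in the eigenfunctions $\{\xi_k(\phi)\}$ of \eqref{eigen}, computing directly in spherical coordinates with the volume element $r^2\sin\phi$; the coordinate singularities are then tamed via algebraic identities such as $-\Delta_{\bf x}\xi=\frac{\omega}{r^2}\xi$ in \eqref{delta-xi} and the manipulations in \eqref{664} and \eqref{omega-ab}. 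This eigenfunction-expansion step is the technical heart of the proof of Lemma \ref{lema-po}, and it is absent from your proposal; the a priori $H^1$ energy identity of Lemma \ref{lem-H1} alone, even together with a Cartesian viewpoint, does not by itself furnish the $H^4_\ast$ estimates needed to close the iteration.
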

\begin{remark}
Unlike \cite{bae2025supersonic}, we do not use the weighted Sobolev norms.
In \cite{bae2025supersonic}, authors used the method of reflection across the entrance and the weighted Sobolev norms with a distance function from the exit boundary to resolve the corner singularity issues. 
In this paper, however, we use the method of reflection across the wall and the standard Sobolev norms.
\end{remark}



\section{Reformulation Problems via Helmholtz decomposition}\label{Sec-HD}

We decompose the velocity vector field ${\bf u}$ as
\begin{equation}\label{HD}
{\bf u}=\nabla\varphi+\mbox{curl}{\bf V}
\end{equation}
for axisymmetric functions
\begin{equation*}
\varphi=\varphi(r,\phi)\mbox{ and }{\bf V}=h_r(r,\phi){\bf e}_r+h_{\phi}(r,\phi){\bf e}_{\phi}+\psi(r,\phi){\bf e}_{\theta}.
\end{equation*}
If $\varphi$, $h_r$, $h_{\phi}$, and $\psi$ are $C^2$, then  a direct computation implies 
\begin{equation*}
{\bf u}=u_r{\bf e}_r+u_{\phi}{\bf e}_{\phi}+u_{\theta}{\bf e}_{\theta}.
\end{equation*}
with
\begin{equation*}
\begin{split}
&u_r:=\partial_r\varphi+\frac{1}{r\sin\phi}\partial_{\phi}( \psi\sin\phi)=\partial_r\varphi+[\nabla\times(\psi{\bf e}_{\theta})]\cdot{\bf e}_r,\\
&u_{\phi}:=\frac{1}{r}(\partial_{\phi}\varphi)-\frac{1}{r}\partial_r(r\psi)=\frac{1}{r}(\partial_{\phi}\varphi)+[\nabla\times(\psi{\bf e}_{\theta})]\cdot{\bf e}_\phi,\\
&u_{\theta}:=\frac{1}{r}\left(\partial_r(rh_{\phi})-\partial_{\phi}h_r\right)=\frac{\Lambda}{r\sin\phi}.
\end{split}
\end{equation*}
Since
$u_{\theta}=\frac{\Lambda}{r\sin\phi}$
by the definition of $\Lambda$ in \eqref{def-Lambda} and we will find $\Lambda$, not $h_{r}$ and $h_{\phi}$,
we set
\begin{equation}
{\bf q}\left(\nabla\varphi,\nabla\times(\psi{\bf e}_{\theta}),\frac{\Lambda}{r\sin\phi}\right):=
{\bf u}
\end{equation}
For notational simplicity, let us set 
\begin{equation*}
{\bf t}\left(\nabla\times(\psi{\bf e}_{\theta}),\frac{\Lambda}{r\sin\phi}\right):={\bf q}\left(\nabla\varphi,\nabla\times(\psi{\bf e}_{\theta}),\frac{\Lambda}{r\sin\phi}\right)-\nabla\varphi\,(=\mbox{curl}{\bf V}).
\end{equation*}
Then we can rewrite the Euler-Poisson system \eqref{Axi-sys} in terms of $(\varphi,\Phi,\psi{\bf e}_{\theta},S,\Lambda)$ as follows:
\begin{equation}\label{HD-sys}
\left\{\begin{split}
&\mbox{div}\left(\varrho(S,\Phi,{\bf q}){\bf q}\right)=0,\\
&\Delta\Phi=\varrho(S,\Phi,{\bf q})-b,\\
&-\Delta(\psi{\bf e}_\theta)=G(S,\Lambda,\partial_{\phi}S,\partial_{\phi}\Lambda, \Phi,{\bf t},\nabla\varphi){\bf e}_{\theta},\\
&\varrho(S,\Phi,{\bf q}){\bf q}\cdot\nabla(S,\Lambda)={\bf 0}
\end{split}\right.
\end{equation}
with ${\bf q}$, ${\bf t}$, $\varrho$ and $G$ defined by 
\begin{equation}\label{def-H-G}
\left.\begin{split}
&{\bf q}:={\bf q}\left(\nabla\varphi,\nabla\times(\psi{\bf e}_{\theta}),\frac{\Lambda}{r\sin\phi}\right),\quad{\bf t}:={\bf t}\left(\nabla\times(\psi{\bf e}_{\theta}),\frac{\Lambda}{r\sin\phi}\right),\\
&\varrho(\eta,z,{\bf p}):=\left[\frac{\gamma-1}{\gamma\eta}\left(z-\frac{1}{2}|{\bf p}|^2\right)\right]^{\frac{1}{\gamma-1}},\\
&G(\eta_1,\eta_2,\eta_3,\eta_4,z,{\bf s},{\bf v}):=\frac{1}{r({\bf s}+{\bf v})\cdot{\bf e}_r}\left(\frac{\eta_3\varrho^{\gamma-1}(\eta_1, z,{\bf s}+{\bf v})}{\gamma-1}+\frac{\eta_2\eta_4}{r^2\sin^2\phi}\right)
\end{split}
\right.
\end{equation}
for $\eta\in\mathbb{R}$, $z\in\mathbb{R}$, ${\bf p}\in\mathbb{R}^3$, $\eta_1,\eta_2,\eta_3,\eta_4\in\mathbb{R}$, and ${\bf s},{\bf v}\in\mathbb{R}^3$.

One can easily check that if the following boundary conditions for 
 $(\varphi,\Phi, \psi{\bf e}_{\theta},S,\Lambda)$ hold, then the boundary conditions in \eqref{sup-bd} hold:
\begin{equation}\label{HD-bd-sup}
\left\{\begin{split}
(S,\Lambda)=( S_{\rm en},r\sin\phi w_{\rm en})
\,\,&\mbox{on}\,\,\Gamma_{\rm en},\\
\varphi=\varphi_{\rm en},\,\,\nabla\varphi\cdot{\bf e}_r=u_{\rm en}-[\nabla\times(\psi{\bf e}_{\theta})]\cdot{\bf e}_r=u_{\rm en}-\frac{\partial_{\phi}(\psi\sin\phi)}{r\sin\phi}\,\, 
&\mbox{on}\,\,\Gamma_{\rm en},\\
[\nabla\times(\psi{\bf e}_{\theta})]\cdot{\bf e}_\phi=0,\,\,\nabla\Phi\cdot{\bf e}_r=E_{\rm en}\,\,&\mbox{on}\,\,\Gamma_{\rm en},\\
\nabla\varphi\cdot{\bf e}_{\phi}=0,\,\,[\nabla\times(\psi{\bf e}_{\theta})]\cdot{\bf e}_\phi={ 0},\,\,
\nabla\Phi\cdot{\bf e}_{\phi}=0\,\,&\mbox{on}\,\,\Gamma_{\rm w},\\
\psi{\bf e}_{\theta}={\bf 0},\,\,\nabla\Phi\cdot{\bf e}_r=\Phi_{\rm ex}\,\,&\mbox{on}\,\,\Gamma_{\rm ex}.
\end{split}\right.
\end{equation}
In \eqref{HD-bd-sup}, $\varphi_{\rm en}=\varphi_{\rm en}(\phi)$ is a function defined by 
\begin{equation*}
\varphi_{\rm en}(\phi):=r_{\rm en}\int_0^{\phi}v_{\rm en}(t)dt+\bar{\varphi}(r_{\rm en}).
\end{equation*} 
Also, if it holds that $\nabla(\psi{\bf e}_{\theta})\cdot{\bf e}_r+\frac{\psi{\bf e}_{\theta}}{r}={\bf 0}$ on $\Gamma_{\rm en}$ and $\psi{\bf e}_{\theta}={\bf 0}$ on $\Gamma_{\rm w}$, then we have $[\nabla\times(\psi{\bf e}_{\theta})]\cdot{\bf e}_\phi={ 0}$ on $\Gamma_{\rm en}\cup \Gamma_{\rm w}$.
Thus we will find $\psi{\bf e}_{\theta}$ satisfying 
\begin{equation*}
\nabla(\psi{\bf e}_{\theta})\cdot{\bf e}_r+\frac{\psi{\bf e}_{\theta}}{r}={\bf 0} \mbox{ on }\Gamma_{\rm en}\mbox{ and }\,\,
\psi{\bf e}_{\theta}={\bf 0}\mbox{ on }\Gamma_{\rm w}.
\end{equation*}

Now we state the main theorem for the reformulated problem in terms of $(\varphi,\Phi,\psi,S,\Lambda)$:

\begin{theorem}\label{M-Thm-super}
Under the same assumptions of Theorem \ref{Thm2.2},
there exist $r_{\rm ex}\in(r_{\rm en},r_{\ast})$ for $r_{\ast}$ in Lemma \ref{super-lemma}  and a small constant ${\bm\tau}_{\star}>0$ depending only on $\gamma$, $\bar{b}$,  $m_0$, $\rho_0$, $\bar{\varphi}$, $\bar{\rho}$, $\bar{\Phi}$, $S_0,$ $E_0$,  $r_{\rm en}$, $r_{\rm ex},$ $\bar{\delta}$, and $\phi_0$ so that if 
\begin{equation*}
{\bm \tau}_p\le{\bm\tau}_{\star},
\end{equation*}
then the boundary value problem \eqref{HD-sys} with  \eqref{HD-bd-sup} has a unique axisymmetric solution $(\varphi,\Phi,\psi{\bf e}_{\theta},S,\Lambda)$ that satisfies the estimate 
\begin{equation}\label{est-super-thm}
\begin{split}
\|\varphi-\bar{\varphi}\|_{H_{\ast}^4(\Omega)}&+\|\Phi-\bar{\Phi}\|_{H_{\ast}^4(\Omega)}+\|\psi{\bf e}_{\theta}\|_{H_{\ast}^5(\Omega;\mathbb{R}^3)}\\
&+\|S-S_0\|_{H_{\ast}^4(\Omega)}+\left\|\frac{\Lambda}{r\sin\phi}{\bf e}_{\theta}\right\|_{H_{\ast}^3(\Omega;\mathbb{R}^3)}\le C{\bm\tau}_p
\end{split}
\end{equation}
for a constant $C>0$ depending only on $\gamma$, $\bar{b}$,  $m_0$, $\rho_0$, $\bar{\varphi}$, $\bar{\rho}$, $\bar{\Phi}$, $S_0,$ $E_0$,  $r_{\rm en}$, $r_{\rm ex},$ $\bar{\delta}$, and $\phi_0$.
\end{theorem}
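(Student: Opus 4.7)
The plan is to prove Theorem \ref{M-Thm-super} by a fixed-point iteration that decouples the reformulated system \eqref{HD-sys} into four sub-problems of distinct PDE type. Fix a small parameter $\sigma>0$ proportional to $\bm\tau_p$ and define a closed convex set $\mathcal{I}(\sigma)$ of admissible $(\varphi,\Phi,\psi,S,\Lambda)$ by requiring the left-hand side of \eqref{est-super-thm} to be at most $\sigma$. Given an iterate $(\tilde\varphi,\tilde\Phi,\tilde\psi,\tilde S,\tilde\Lambda)\in\mathcal{I}(\sigma)$, I would build the next iterate by solving the four equations of \eqref{HD-sys} in a judicious order and then show that the associated solution operator is a contraction in a weaker norm, provided $\bm\tau_p$ is sufficiently small.

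The four sub-problems are handled as follows. The transport equations $\varrho\tilde{\bf q}\cdot\nabla(S,\Lambda)=\mathbf{0}$ with $(S,\Lambda)|_{\Gamma_{\rm en}}$ prescribed are solved by integrating along the streamlines of $\tilde{\bf q}$; since $\tilde u_r>0$ throughout $\Omega$, every streamline enters at $\Gamma_{\rm en}$ and exits at $\Gamma_{\rm ex}$, and the $H^k_*$-estimates follow from standard transport arguments combined with the compatibility conditions at $\phi=\phi_0$ in \eqref{super-comp}. The Poisson equation $\Delta\Phi=\varrho(\tilde S,\tilde\Phi,\tilde{\bf q})-b$ is a mixed Neumann problem with data $E_{\rm en}$, $\Phi_{\rm ex}$, and $0$ on the three boundary pieces, solved by standard elliptic theory after reflecting across $\Gamma_{\rm w}$ to avoid corner singularities at $\Gamma_{\rm w}\cap(\Gamma_{\rm en}\cup\Gamma_{\rm ex})$; the compatibility conditions in \eqref{super-comp} guarantee that the reflected data remains $C^4$. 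The vector Poisson equation $-\Delta(\psi{\bf e}_\theta)=G(\cdots){\bf e}_\theta$ is treated similarly, with the axis singularity at $\phi=0$ handled by working with the cartesian components of $\psi{\bf e}_\theta$ (which are smooth across the axis under $w_{\rm en}(0)=0$), and with the boundary condition $\nabla(\psi{\bf e}_\theta)\cdot{\bf e}_r+\psi{\bf e}_\theta/r=\mathbf{0}$ on $\Gamma_{\rm en}$ ensuring $[\nabla\times(\psi{\bf e}_\theta)]\cdot{\bf e}_\phi=0$ there.

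The main obstacle is the fourth sub-problem: the quasilinear second-order equation for $\varphi$ obtained from $\mathrm{div}(\varrho(\tilde S,\tilde\Phi,{\bf q}){\bf q})=0$ after using the Bernoulli relation to express $\varrho$ through $|{\bf q}|^2$, $\tilde S$, and $\tilde\Phi$. Linearized around the background supersonic flow, its principal part is hyperbolic with $r$ as the time-like direction, which is precisely why the norm $H^k_*(\Omega)$ is designed to include an extra $\partial_r$-derivative on top of $H^{k-1}(\Omega)$. The natural strategy is an energy estimate obtained by testing with $\partial_r\varphi$ and tangential derivatives thereof, integrating by parts, and exploiting the positivity of the coefficient of $(\partial_r\varphi)^2$ that holds under the supersonic condition $|{\bf u}|>c$; the length $L=r_{\rm ex}-r_{\rm en}$ must be chosen small enough (within the range granted by Lemma \ref{super-lemma}) so that supersonicity is preserved along the iteration and the energy estimate does not degenerate. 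I expect the lemma deferred to Section \ref{sec-lem-last} to be precisely such a well-posedness and regularity statement for this linearized $\varphi$-problem on $\Omega$, with Neumann data on $\Gamma_{\rm en}$, slip on $\Gamma_{\rm w}$, and nothing prescribed on the characteristic-like $\Gamma_{\rm ex}$.

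With each sub-problem solvable and estimable linearly in the data and in the iterate, closing the iteration amounts to verifying that the solution operator maps $\mathcal{I}(\sigma)$ into itself whenever $\sigma\simeq\bm\tau_p$ is sufficiently small, and that differences of iterates contract in a lower-order norm such as $H^{k-1}_*$. A standard weak compactness argument then produces a fixed point retaining the full $H^k_*$-regularity stated in \eqref{est-super-thm}, and uniqueness in Theorem \ref{M-Thm-super} follows from the same contraction estimate. Recovering the original variables $({\bf u},\rho,S,\Phi)$ through the Helmholtz decomposition \eqref{HD} and checking that \eqref{HD-bd-sup} implies \eqref{sup-bd} then yields Theorem \ref{Thm2.2}.
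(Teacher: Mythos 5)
Your overall scaffolding — Helmholtz decomposition, hyperbolic treatment of the $\varphi$-equation with $r$ as time, transport for $(S,\Lambda)$ along streamlines, vector Poisson for $\psi\mathbf{e}_\theta$, reflection across $\Gamma_{\rm w}$, and a Schauder-type closure — matches the paper's. But there is a genuine structural gap in how you propose to handle $(\varphi,\Phi)$, and it concerns the defining feature of the Euler--Poisson coupling.

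You decouple $\Phi$ from $\varphi$ by solving $\Delta\Phi=\varrho(\tilde S,\tilde\Phi,\tilde{\mathbf q})-b$ with $\tilde\Phi$ frozen inside $\varrho$. This makes the $\Phi$-problem a pure Neumann Poisson problem with a prescribed right-hand side: it then requires a solvability compatibility condition (the integral of the source must match the boundary flux), and even when solvable the solution is only determined up to a constant. The term that rescues well-posedness is precisely $\partial_{\Phi}\varrho>0$, which the paper retains by keeping $\Phi$ unknown inside $\varrho$; upon linearization this produces a coercive zeroth-order term $g_0\Psi$ in $\mathscr{L}_2$ (see \eqref{l2l2}--\eqref{gij}), making the Neumann problem uniquely solvable. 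Freezing $\tilde\Phi$ discards this.

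More importantly, even if you keep $\Phi$ in $\varrho$, your proposed \emph{flat} iteration over all five variables does not obviously close. The coupling between the $\varphi$- and $\Phi$-equations is $O(1)$, not $O(\bm\tau_p)$: in the linearization the hyperbolic operator $\mathscr{L}_1$ contains $b_i\partial_i\Psi$ and $c\Psi$, and the elliptic operator $\mathscr{L}_2$ contains $h_1\partial_r\chi$, with $b_1,c,h_1$ all of order one (cf. \eqref{alpha}, \eqref{gij}). If you solve $\Phi$ then $\varphi$ sequentially with the other frozen, the resulting a priori estimates have the shape $\|\Psi\|\lesssim\|\chi\|+\text{data}$ and $\|\chi\|\lesssim\|\Psi\|+\text{data}$ with constants you cannot make small via $\bm\tau_p$ or $L$, so the iteration set $\mathcal{I}(\sigma)$ is not manifestly preserved. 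The paper avoids this by treating $(\chi,\Psi)$ as one coupled \emph{linear} hyperbolic--elliptic system (equations \eqref{fix-lin-pps}--\eqref{fix-lin-ho}) and proving a single combined $H^1$ energy estimate (Lemma \ref{lem-H1}) by testing with $\mathcal{W}(\partial_r\chi_h)$ on $\mathscr{L}_1$ and $-\Psi_h$ on $\mathscr{L}_2$; the weight $\mathcal{W}$ solves a Riccati inequality that absorbs the $O(1)$ cross-terms once $L$ is small, and only the genuinely nonlinear (quadratic) remainders end up needing $\bm\tau_p$ small. This coupled linear subsystem — not a standalone $\varphi$-problem — is what the deferred Lemma \ref{lema-po} actually establishes, via the further nested Galerkin/eigenfunction argument in Lemma \ref{Lemma27}. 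So the iteration in the paper is nested (inner loop on $(\chi,\Psi)$, middle loop on $\psi\mathbf{e}_\theta$, outer loop on $(S,\Lambda)$), not flat, and the heart of the matter is the coupled energy estimate you would need to supply before your plan can go through.
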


Hereafter, a constant $C$ is said to be chosen depending only on the data if $C$ is chosen depending only on $\gamma$, $\bar{b}$,  $m_0$, $\rho_0$, $\bar{\varphi}$, $\bar{\rho}$, $\bar{\Phi}$, $S_0,$ $E_0$,  $r_{\rm en}$,  $r_{\ast}$, $\bar{\delta}$, and $\phi_0$.

Once we get \eqref{est-super-thm}, we can see that the solution is a classical solution by applying the standard elliptic theory for the interior regularity to \eqref{W-super}, \eqref{C-N-lin}, and \eqref{lin-1}.
One can easily prove that Theorem  \ref{M-Thm-super} implies Theorem \ref{Thm2.2}.
Thus the rest of the paper is devoted to proving Theorem  \ref{M-Thm-super}. 

\section{Proof of Theorem \ref{M-Thm-super}}\label{last-sec}
For positive constants $(\delta_1,\delta_2,\delta_3,L:=r_{\rm ex}-r_{\rm en})$ to be determined later, define iteration sets as follows:

\begin{itemize}
\item[(i)] Iteration set for $(S,\Lambda)$: 
Define an iteration set $\mathcal{J}(\delta_1)$ by 
\begin{equation}\label{J1}
\mathcal{J}(\delta_1):=\mathcal{J}_1(\delta_1)\times\mathcal{J}_2(\delta_1)
\end{equation}
with 
\begin{equation*}
\begin{split}
&\mathcal{J}_1(\delta_1):=\left\{S=S(r,\phi)\in H_{\ast}^4(\Omega)\left|\,
	\begin{split}
	& \|S-S_0\|_{H_{\ast}^4(\Omega)}\le \delta_1{\bm\tau}_p,\\
	& S=S_{\rm en}\mbox{ on }\Gamma_{\rm en},\\
	&\partial_{\phi} S=0\mbox{ on }\Gamma_{\rm w}
	\end{split}\right.\right\},\\
&\mathcal{J}_2(\delta_1):=\left\{\mathcal{V}{\bf e}_{\theta}=\frac{\Lambda(r,\phi)}{r\sin\phi}{\bf e}_{\theta}\in [H_{\ast}^3(\Omega)]^3\left|\, 
	\begin{split}
	&\left\|\mathcal{V}{\bf e}_{\theta}\right\|_{H_{\ast}^3(\Omega;\mathbb{R}^3)}\le \delta_1{\bm\tau}_p,\,\\
	&\mathcal{V}= w_{\rm en}\mbox{ on }\Gamma_{\rm en},\\
	&\partial_{\phi}\Lambda=0\mbox{ on }\Gamma_{\rm w}
	\end{split}\right.\right\}.
\end{split}
\end{equation*}

\item[(ii)] Iteration set for $(\chi,\Psi)$: Define an iteration set $\mathcal{J}(\delta_2)$ by 
\begin{equation}\label{J2}
\mathcal{J}(\delta_2):=\mathcal{K}(\delta_2)\times\mathcal{K}(\delta_2)
\end{equation}
with
\begin{equation*}
\mathcal{K}(\delta_2):=\left\{\zeta=\zeta(r,\phi)\in H_{\ast}^4(\Omega)\left|\,
	\begin{split}
	&\|\zeta\|_{H_{\ast}^4(\Omega)}\le \delta_2{\bm\tau}_p,\\
	&\partial_{\phi}\zeta=0\mbox{ on }\Gamma_{\rm w}
	\end{split}\right.
	 \right\}.
\end{equation*}

\item[(iii)] Iteration set for $\psi{\bf e}_{\theta}$: Define an iteration set $\mathcal{J}(\delta_3)$ by 
\begin{equation}\label{J3}
\begin{split}
&\mathcal{J}(\delta_3):=\left\{{\bf W}=\psi(r,\phi){\bf e}_{\theta}\in [H_{\ast}^5(\Omega)]^3\left|\,
	\begin{split}
	&\|{\bf W}\|_{H_{\ast}^5(\Omega;\mathbb{R}^3)}\le \delta_3{\bm\tau}_p,\\
	&{\bf W}={\bf 0}\mbox{ on }\Gamma_{\rm w},\\
	&\partial_\phi(\nabla\times{\bf W}\cdot{\bf e}_r)=0\mbox{ on }\Gamma_{\rm w}
	\end{split}\right.\right\}.
\end{split}
\end{equation}
\end{itemize}
For a fixed $(S_{\ast},\frac{\Lambda_{\ast}}{r\sin\phi}{\bf e}_{\theta})\in\mathcal{J}(\delta_1)$, we first solve the following boundary value problem for $(\varphi,\Phi,\psi{\bf e}_{\theta})$:
\begin{equation}\label{super-sys}
\begin{split}
&\left\{\begin{split}
	&\mbox{div}\left(\varrho(S_{\ast},\Phi,{\bf q}_{\ast}){\bf q}_{\ast}\right)=0\\
	&\Delta\Phi=\varrho(S_{\ast},\Phi,{\bf q}_{\ast})-b\\
	&-\Delta(\psi{\bf e}_\theta)=G(S_{\ast},\Lambda_{\ast},\partial_{\phi}S_{\ast},\partial_{\phi}	\Lambda_{\ast},\Phi,{\bf t}_{\ast},\nabla\varphi){\bf e}_{\theta}\end{split}\right.\quad\mbox{in}\,\,\Omega,\\
&\left\{\begin{split}
	\varphi=\varphi_{\rm en},\,\,\nabla\varphi\cdot{\bf e}_r=u_{\rm en}-[\nabla\times(\psi{\bf e}_{\theta})]\cdot{\bf e}_r,\,\,\nabla\Phi\cdot{\bf e}_r=E_{\rm en}\,\, 
	&\mbox{on}\,\,\Gamma_{\rm en},\\
	\nabla(\psi{\bf e}_{\theta})\cdot{\bf e}_r+\frac{\psi{\bf e}_{\theta}}{r}={\bf 0}\,\,&\mbox{on}\,\,\Gamma_{\rm en},\\
	\nabla\varphi\cdot{\bf e}_{\phi}=0,\,\,
\nabla\Phi\cdot{\bf e}_{\phi}=0,\,\, \psi{\bf e}_{\theta}={\bf 0}\,\,&\mbox{on}\,\,\Gamma_{\rm w},\\
	\nabla\Phi\cdot{\bf e}_r=\Phi_{\rm ex},\,\,\psi{\bf e}_{\theta}={\bf 0}\,\,&\mbox{on}\,\,\Gamma_{\rm ex}
	\end{split}\right.
\end{split}
\end{equation}
with 
\begin{equation*}
{\bf q}_{\ast}:={\bf q}\left(\nabla\varphi,\nabla\times(\psi{\bf e}_{\theta}),\frac{\Lambda_{\ast}}{r\sin\phi}\right)\mbox{ and }{\bf t}_{\ast}:={\bf t}\left(\nabla\times(\psi{\bf e}_{\theta}),\frac{\Lambda_{\ast}}{r\sin\phi}\right).
\end{equation*}


\begin{proposition}\label{su-lemma-1}
For $r_{\ast}>0$ in Lemma \ref{super-lemma}, 
there exist a constant $L(=r_{\rm ex}-r_{\rm en})\in(0,r_{\ast}-r_{\rm en}]$  depending only on the data and
 a small constant ${\bm\tau}_1>0$ depending only on the data and $(L, \delta_1,\delta_2,\delta_3)$ so that if 
\begin{equation*}
{\bm\tau}_p\le{\bm\tau}_1,
\end{equation*}
then the boundary value problem \eqref{super-sys} has a unique axisymmetric solution $(\varphi,\Phi,\psi{\bf e}_{\theta})$ that satisfies \begin{equation}\label{lem1-est}
\|\varphi-\bar{\varphi}\|_{H_{\ast}^4(\Omega)}+\|\Phi-\bar{\Phi}\|_{H_{\ast}^4(\Omega)}+\|\psi{\bf e}_{\theta}\|_{H_{\ast}^5(\Omega;\mathbb{R}^3)}\le C(1+\delta_1){\bm\tau}_p
\end{equation}
for a constant $C>0$ depending only on the data and $L$.
\end{proposition}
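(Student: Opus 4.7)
The plan is to prove Proposition \ref{su-lemma-1} by a standard iteration that decouples the three equations in \eqref{super-sys}. Write $\chi:=\varphi-\bar\varphi$, $\Psi:=\Phi-\bar\Phi$, so that $(\chi,\Psi)$ lies in the iteration set $\mathcal{J}(\delta_2)$ defined in \eqref{J2} and ${\bf W}:=\psi{\bf e}_\theta$ lies in $\mathcal{J}(\delta_3)$ defined in \eqref{J3}. For given trial functions $(\tilde\chi,\tilde\Psi)\in\mathcal{J}(\delta_2)$ and $\tilde{\bf W}\in\mathcal{J}(\delta_3)$, define the update $(\chi,\Psi,{\bf W})$ by solving three linear problems obtained by freezing the coefficients at the trial state: (a) a quasilinear second-order PDE for $\chi$ coming from linearizing $\mathrm{div}(\varrho(S_*,\Phi,{\bf q}_*){\bf q}_*)=0$ around the background; (b) the linear Neumann Poisson problem $\Delta\Psi=\varrho(S_*,\bar\Phi+\tilde\Psi,\tilde{\bf q}_*)-\bar\rho-(b-\bar b)$ obtained from the second equation of \eqref{super-sys}; (c) the linear vector Laplace problem $-\Delta{\bf W}=G(\ldots){\bf e}_\theta$, which is already linear in $\psi$ once its coefficients are evaluated at the trial.

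The essential analytic inputs are three a priori estimates. For step (a), the principal part of the linearized continuity equation, written in Cartesian form, has coefficient matrix $\bar A_{ij}(r,\phi)$ that is hyperbolic with $r$ as the time-like direction because the background is supersonic ($|\nabla\bar\varphi|>\bar c$); this is exactly why the iteration sets use the directional norm $\|\cdot\|_{H^k_*}$. Applying the energy method (multiplying by $\partial_r^{\alpha}\chi$, integrating by parts, and carefully controlling boundary contributions on $\Gamma_{\rm w}$ via the slip condition and on $\Gamma_{\rm ex}$ via the time-like character of $r$) yields a closed estimate of the form
\begin{equation*}
\|\chi\|_{H^4_*(\Omega)}\le C\bigl(\,\|u_{\rm en}-\tfrac{m_0}{r_{\rm en}^2\rho_0}\|_{C^3}+\|\tilde\Psi\|_{H^4_*}+\|\tilde{\bf W}\|_{H^5_*}+\delta_1{\bm\tau}_p\,\bigr)+CL^{1/2}\|\chi\|_{H^4_*(\Omega)},
\end{equation*}
so that choosing $L$ small depending only on the data lets us absorb the last term. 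For step (b), standard elliptic $H^4$ estimates for the Neumann Poisson problem on $\Omega$ apply, with the compatibility conditions in \eqref{super-comp} providing the needed regularity at the corner $\Gamma_{\rm en}\cap\Gamma_{\rm w}$ and $\Gamma_{\rm ex}\cap\Gamma_{\rm w}$. For step (c), using the identity $\Delta(\psi{\bf e}_\theta)\cdot{\bf e}_\theta=\Delta\psi-\psi/(r\sin\phi)^2$ reduces the vector equation to a scalar problem for $\psi$; the reflection across $\Gamma_{\rm w}$ indicated in the introduction removes the corner singularity and yields an $H^5$ estimate for $\bf W$.

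Combining these three estimates gives the bound \eqref{lem1-est}, and by choosing $\delta_2,\delta_3$ large (depending only on data and $L$) and ${\bm\tau}_p$ small, the updated triple $(\chi,\Psi,{\bf W})$ lies again in $\mathcal{J}(\delta_2)\times\mathcal{J}(\delta_3)$, so the iteration is a self-map. Uniqueness and convergence follow by estimating differences of two iterates in the weaker norm $H^1_*\times H^1_*\times H^2_*$: the differences satisfy the same three linear problems with zero boundary data and quadratically small sources, yielding a contraction for $L$ and ${\bm\tau}_p$ small. A standard weak compactness argument shows the fixed point retains the $H^4_*\times H^4_*\times H^5_*$ regularity.

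The principal obstacle is the supersonic energy estimate in step (a) at the $H^4_*$ level. Both the need to commute $\partial_r^k$-derivatives with the quasilinear operator while respecting the boundary conditions (in particular, the Neumann condition on $\Gamma_{\rm w}$ combined with the Dirichlet–Neumann pair on $\Gamma_{\rm en}$), and the need to trace the dependence of every constant on $L$ so that the length of the nozzle can absorb the top-order error, require careful bookkeeping. A secondary difficulty lies in step (c) because of the coordinate singularity on the axis $\phi=0$ and the corner $\Gamma_{\rm en}\cap\Gamma_{\rm w}$; this is precisely where the compatibility conditions in \eqref{super-comp} and the reflection across $\Gamma_{\rm w}$ intervene, and where working with the vector field ${\bf W}$ (rather than with the scalar $\psi$ alone) is essential.
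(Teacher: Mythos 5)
Your plan differs from the paper's in a way that introduces a genuine gap: you decouple the $\chi$-equation and the $\Psi$-equation by freezing the trial data $\tilde\Psi$ in (a) and $\tilde\chi$ in (b), but the cross-coupling between these two equations is of size $O(1)$, coming from the background, not from the perturbation. Concretely, after linearization the paper's system reads $\mathscr{L}_1(\chi,\Psi)=\mathcal{F}$, $\mathscr{L}_2(\chi,\Psi)=\mathfrak{f}$, where $\mathscr{L}_1$ contains $-b_i\partial_i\Psi-c\Psi$ and $\mathscr{L}_2$ contains $-g_0\Psi-h_1\partial_r\chi$; the coefficients $b_1$, $c$, $g_0$, $h_1$ are evaluated at the background in \eqref{alpha}, \eqref{gij} and are bounded away from zero. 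If you move $-b_i\partial_i\tilde\Psi-c\tilde\Psi$ and $-h_1\partial_r\tilde\chi$ into the source, your own estimate
$\|\chi\|_{H^4_*}\le C(\cdots+\|\tilde\Psi\|_{H^4_*}+\cdots)+CL^{1/2}\|\chi\|_{H^4_*}$
has an irreducible term $C\|\tilde\Psi\|_{H^4_*}$ with $C$ of order one. Since the iteration set bound is $\|\tilde\Psi\|_{H^4_*}\le\delta_2{\bm\tau}_p$, the self-map condition $\|\chi\|\le\delta_2{\bm\tau}_p$ forces $C\le 1$, which you cannot guarantee; enlarging $\delta_2$ cancels identically on both sides and does not help. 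The paper avoids this by keeping the cross-coupling terms inside the linear operators and proving a joint $H^1$ estimate (Lemma \ref{lem-H1}) for the coupled pair via the weighted energy functional $\mathcal{I}_L(\chi_h,\Psi_h,\mathcal{W})=\int_{\mathcal{N}_L}[\mathcal{W}(\partial_r\chi_h)\mathscr{L}_1-\Psi_h\mathscr{L}_2]r^2\sin\phi\,d{\bf y}$, choosing $\mathcal{W}$ through the Riccati inequality \eqref{W-sa} so that the quadratic form is positive definite; this is precisely where shortening $L$ buys you positivity. The upshot is that the nonlinear $(\chi,\Psi)$ iteration in the paper (Lemma \ref{lema-po}, estimate \eqref{fix-est-po}) has only a \emph{quadratic} dependence $(\delta_2{\bm\tau}_p)^2$ on the trial data, which closes for ${\bm\tau}_p$ small, whereas the fully decoupled version has linear, unsmall dependence.

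A second, related gap: your problem (b), $\Delta\Psi=\varrho(S_*,\bar\Phi+\tilde\Psi,\tilde{\bf q}_*)-\bar\rho-(b-\bar b)$ with pure Neumann data on all of $\partial\Omega$, is degenerate. The Neumann Laplacian has a nontrivial kernel (constants) and requires a compatibility condition on the source, which you neither verify nor can expect without additional structure. The paper's $\mathscr{L}_2$ keeps the zeroth-order coefficient $-g_0\Psi$ with $g_0=\partial_zB(S_0,\bar\Phi,D\bar\varphi,{\bf 0})>0$, which is exactly the coercive term that renders the linearized $\Psi$-problem uniquely solvable with all-Neumann data; by freezing the $\Psi$-dependence of $\varrho$ in the source you discard it. Finally, a structural remark: the paper does not run a single fixed point over the triple $(\chi,\Psi,{\bf W})$. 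It first solves the full nonlinear $(\varphi,\Phi)$ problem for each fixed ${\bf W}_*$ (Lemma \ref{lema-po}), then solves the vector problem \eqref{W-super} for ${\bf W}$ (Lemma \ref{lema-vor}), and runs the fixed point only in ${\bf W}$. This nesting lets one read off the closure relation $\delta_3=C_\dagger\delta_1$ directly from the estimate \eqref{W-est}, because the source $G_\sharp$ driving $-\Delta{\bf W}$ is controlled entirely by $(S_*,\Lambda_*)\in\mathcal{J}(\delta_1)$ (its leading terms are $\partial_\phi S_*$ and $\partial_\phi\Lambda_*$), independent of $\delta_2$ and $\delta_3$. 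A single joint fixed point would require tracking the $\delta_1,\delta_2,\delta_3$ dependence simultaneously in all three estimates, and your sketch does not supply the quadratic structure that would make that bookkeeping close.
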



\begin{proof}[Proof of Proposition \ref{su-lemma-1}]
For a fixed ${\bf W}_{\ast}=\psi_{\ast}{\bf e}_{\theta}\in\mathcal{J}(\delta_3)$, let us consider the following nonlinear boundary problem for $(\varphi,\Phi)$:
\begin{equation}\label{pp-prob-sup}
\left\{\begin{split}
\mbox{div}\left(\varrho(S_{\ast},\Phi,{\bf q}^{\ast}){\bf q}^{\ast}\right)=0,\,\,\Delta\Phi=\varrho(S_{\ast},\Phi,{\bf q}^{\ast})-b\,\,&\mbox{in}\,\,\Omega,\\
\varphi=\varphi_{\rm en},\,\,\nabla\varphi\cdot{\bf e}_r=u_{\rm en}-[\nabla\times(\psi_{\ast}{\bf e}_{\theta})]\cdot{\bf e}_r,\,\,\nabla\Phi\cdot{\bf e}_r=E_{\rm en}\,\,&\mbox{on}\,\,\Gamma_{\rm en},\\
\partial_{\phi}\varphi=0,\,\,
\nabla\Phi\cdot{\bf e}_{\phi}=0\,\,&\mbox{on}\,\,\Gamma_{\rm w},\\
\nabla\Phi\cdot{\bf e}_r=\Phi_{\rm ex}\,\,&\mbox{on}\,\,\Gamma_{\rm ex}
\end{split}\right.
\end{equation}
with
\begin{equation}\label{t-ast}
{\bf q}^{\ast}:={\bf q}\left(\nabla\varphi,\nabla\times(\psi_{\ast}{\bf e}_{\theta}),\frac{\Lambda_{\ast}}{r\sin\phi}\right)\mbox{ and }{\bf t}^{\ast}:={\bf t}\left(\nabla\times(\psi_{\ast}{\bf e}_{\theta}),\frac{\Lambda_{\ast}}{r\sin\phi}\right).
\end{equation}


\begin{lemma}\label{lema-po}
For $r_{\ast}>0$ in Lemma \ref{super-lemma}, 
there exist a constant $L(=r_{\rm ex}-r_{\rm en})\in(0,r_{\ast}-r_{\rm en}]$ depending only on the data and
a small constant ${\bm\tau}_2>0$ depending only on the data  and $(L, \delta_1,\delta_2,\delta_3)$ so that if 
$${\bm\tau}_p\le {\bm\tau}_2,$$
then the boundary value problem \eqref{pp-prob-sup} has a unique axisymmetric solution $(\varphi,\Phi)$ that satisfies the estimate
\begin{equation}\label{po=est}
\|\varphi-\bar{\varphi}\|_{H_{\ast}^4(\Omega)}+\|\Phi-\bar{\Phi}\|_{H_{\ast}^4(\Omega)}\le C\left(1+\delta_1+\delta_3\right){\bm\tau}_p
\end{equation}
for a constant $C>0$ depending only on the data and $L$.
\end{lemma}
The proof of Lemma \ref{lema-po} is given in the next section.
%
For  the solution $(\varphi,\Phi)$ obtained in Lemma \ref{lema-po},
let us set 
$$G_{\sharp}:=G(S_{\ast},\Lambda_{\ast},\partial_{\phi}S_{\ast},\partial_{\phi}\Lambda_{\ast},\Phi,{\bf t}_{\ast},\nabla\varphi)$$ for notational simplicity.
We solve  the following boundary value problem for ${\bf W}$:
\begin{equation}\label{W-super}
\left\{\begin{split}
-\Delta{\bf W}=G_{\sharp}{\bf e}_{\theta}\quad&\mbox{in}\,\,\Omega,\\
\nabla{\bf W}\cdot{\bf e}_r+\frac{{\bf W}}{r}={\bf 0}\quad&\mbox{on}\,\,\Gamma_{\rm en},\\
{\bf W}={\bf 0}\quad&\mbox{on}\,\,\Gamma_{\rm w}\cup\Gamma_{\rm ex}.
\end{split}\right.
\end{equation}


\begin{lemma}\label{lema-vor}
There exists a small constant ${\bm\tau}_3\in(0,{\bm\tau}_2]$ depending only on the data and $(L, \delta_1,\delta_2,\delta_3)$ so that if 
$${\bm\tau}_p\le {\bm\tau}_3,$$
then the boundary value problem \eqref{W-super} has a unique solution ${\bf W}$ of the form 
\begin{equation*}
{\bf W}=\psi{\bf e}_{\theta}
\end{equation*}
for an axisymmetric function $\psi=\psi(r,\phi)$ satisfying 
\begin{equation*}
\left\{\begin{split}
-\left(\Delta_{\bf x}-\frac{1}{r^2\sin^2\phi}\right)\psi=G_{\sharp}\quad&\mbox{in}\,\,\Omega,\\
\frac{1}{r}\nabla(r\psi)\cdot{\bf e}_r=0\quad&\mbox{on}\,\,\Gamma_{\rm en},\\
\psi=0\quad&\mbox{on}\,\,\Gamma_{\rm w}\cup\Gamma_{\rm ex}\cup[{\Omega}\cap\{\phi=0\}].\\
\end{split}\right.
\end{equation*}
Furthermore, the solution satisfies the estimate 
\begin{equation}\label{W-est}
\|{\bf W}\|_{H_{\ast}^5(\Omega;\mathbb{R}^3)}\le C\|G_{\sharp}{\bf e}_{\theta}\|_{H_{\ast}^3(\Omega;\mathbb{R}^3)}\le C_{\dagger}\delta_1{\bm\tau}_p
\end{equation}
for constants $C, C_{\dagger}>0$ depending only on the data and $L$.
\end{lemma}
By the standard elliptic theory, one can prove 
the unique existence of a solution ${\bf W}$ to \eqref{W-super} and the estimate \eqref{W-est}.
To prove that  ${\bf W}$ has of the form ${\bf W}=\psi{\bf e}_{\theta}$ for an axisymmetric function $\psi$,  one can adjust and apply the proof of Theorem 3.1 of \cite{park2020transonic}. So we skip it. 
%
For the unique solution ${\bf W}$ to \eqref{W-super} associated with ${\bf W}_{\ast}\in \mathcal{J}(\delta_3)$,
we define an iteration mapping $\mathcal{I}_v:\mathcal{J}(\delta_3)\to H_{\ast}^5(\Omega;\mathbb{R}^3)$ by 
\begin{equation}\label{mapping-W}
\mathcal{I}_v({\bf W}_{\ast})={\bf W}.
\end{equation}
If we choose $\delta_3$ as 
\begin{equation*}
\delta_3:=C_{\dagger}\delta_1
\end{equation*}
for a constant $C_{\dagger}$ in \eqref{W-est}, then the mapping $\mathcal{I}_v$ maps $\mathcal{J}(\delta_3)$ into itself.
By applying the Schauder fixed point theorem, one can prove that the mapping $\mathcal{I}_v$ has a fixed point ${\bf W}_{\sharp}:=\psi_{\sharp}{\bf e}_{\theta}\in\mathcal{J}(\delta_3)$.
For the solution $(\varphi_{\sharp},\Phi_{\sharp})$ to \eqref{pp-prob-sup} associated with ${\bf W}_{\ast}={\bf W}_{\sharp}$, it is clear that $(\varphi_{\sharp},\Phi_{\sharp},{\bf W}_{\sharp})$ is a solution to the boundary value problem \eqref{super-sys}, and it satisfies the estimate \eqref{lem1-est}.

To complete the proof of Proposition \ref{su-lemma-1}, it remains to prove the uniqueness of a solution. 
Suppose that there exist two solutions $(\varphi_1,\Phi_1,{\bf W}_1)$ and $(\varphi_2,\Phi_2,{\bf W}_2)$.
Then ${\bf W}_1-{\bf W}_2$ satisfies 
\begin{equation*}
\left\{\begin{split}
-\Delta({\bf W}_1-{\bf W}_2)=(G_1-G_2){\bf e}_{\theta}\quad&\mbox{in}\,\,\Omega,\\
\nabla({\bf W}_1-{\bf W}_2)\cdot{\bf e}_r+\frac{{\bf W}_1-{\bf W}_2}{r}={\bf 0}\quad&\mbox{on}\,\,\Gamma_{\rm en},\\
{\bf W}_1-{\bf W}_2={\bf 0}\quad&\mbox{on}\,\,\Gamma_{\rm w}\cup\Gamma_{\rm ex}
\end{split}\right.
\end{equation*}
for $G_k$ ($k=1,2$) defined by 
\begin{equation*}
G_k:=G(S_{\ast},\Lambda_{\ast},\partial_{\phi}S_{\ast},\partial_{\phi}\Lambda_{\ast},\Phi_k,{\bf t}_{k},\nabla\varphi_k)
\mbox{ with }{\bf t}_k:={\bf t}\left(\nabla\times{\bf W}_k,\frac{\Lambda_{\ast}}{r\sin\phi}\right).
\end{equation*}
Also, for ${\bm\tau}_3$ in Lemma \ref{lema-vor}, there exists a sufficiently small constant ${\bm\tau}_{\diamond}\in(0,{\bm\tau}_3]$ depending only on the data and $(L,\delta_1,\delta_2,\delta_3)$ so that if ${\bm\tau}_p\in(0,{\bm\tau}_{\diamond}]$, then  ${\bf W}_1-{\bf W}_2$ satisfies
\begin{equation}\label{W12-con}
\begin{split}
\|{\bf W}_1-{\bf W}_2\|_{H^2(\Omega;\mathbb{R}^3)}
&\le C_{\ast}\|(G_1-G_2){\bf e}_{\theta}\|_{L^2(\Omega;\mathbb{R}^3)}\\
&\le C^{\ast}{\bm\tau}_p\|{\bf W}_1-{\bf W}_1\|_{H^2(\Omega;\mathbb{R}^3)}
\end{split}
\end{equation}
for positive constants $C_{\ast}$ and $C^{\ast}$ depending only on the data and $L$.
Therefore,
we can choose  ${\bm\tau}_1\in(0,{\bm\tau}_{\diamond}]$ depending only on the data and $(L,\delta_1,\delta_2,\delta_3)$ so that \eqref{W12-con} implies ${\bf W}_1={\bf W}_2$.
This finishes the proof of Proposition \ref{su-lemma-1}.
\end{proof}

Let $(\varphi_{\ast},\Phi_{\ast},\psi_{\ast}{\bf e}_{\theta})$ be the axisymmetric solution to the boundary value problem \eqref{super-sys} associated with $(S_{\ast},\frac{\Lambda_{\ast}}{r\sin\phi}{\bf e}_{\theta})$. 
For notational simplicity, we set
\begin{equation*}
{\bf q}^{\star}:={\bf q}\left(\nabla\varphi_{\ast},\nabla\times(\psi_{\ast}{\bf e}_{\theta}),\frac{\Lambda_{\ast}}{r\sin\phi}\right)
\end{equation*}
and consider the following initial value problem for $(S,\Lambda)$:
\begin{equation}\label{INI_pro-super}
\left\{\begin{split}
\varrho(S_{\ast},\Phi_{\ast},{\bf q}^{\star}){\bf q}^{\star}\cdot\nabla(S,\Lambda)={\bf 0}\quad&\mbox{in}\,\,\Omega,\\
(S,\Lambda)=( S_{\rm en},r\sin\phi\, w_{\rm en})\quad&\mbox{on}\,\,\Gamma_{\rm en}.
\end{split}\right.
\end{equation}

\begin{lemma}\label{super-lemma-2}
Let ${\bm\tau}_1$ be from Proposition \ref{su-lemma-1}.
There exists a small constant ${\bm\tau}_4\in(0,{\bm\tau}_1]$ depending only on the data and $(L,\delta_1,\delta_2,\delta_3)$ so that if 
\begin{equation*}
{\bm\tau}_p\le{\bm\tau}_4,
\end{equation*}
then the initial value problem \eqref{INI_pro-super} has a unique axisymmetric solution $(S,\Lambda)$ that satisfies
\begin{equation}\label{super-est-trans}
\|S-S_0\|_{H_{\ast}^4(\Omega)}+\left\|\frac{\Lambda}{r\sin\phi}{\bf e}_{\theta}\right\|_{H_{\ast}^3(\Omega;\mathbb{R}^3)}\le C_{\diamond}{\bm\tau}_p
\end{equation}
for a constant $C_{\diamond}>0$ depending only on the data and $L$. 
\end{lemma}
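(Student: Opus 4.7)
The strategy is to use the fact that, once $(\varphi_\ast,\Phi_\ast,\psi_\ast{\bf e}_\theta)$ is fixed by Proposition \ref{su-lemma-1}, the system \eqref{INI_pro-super} is a \emph{linear} pair of hyperbolic (in $r$) transport equations for $(S,\Lambda)$. Since $\varrho>0$ and $u_r^\star\ge\frac{1}{2}\bar u_r>0$ on $\overline{\Omega}$ for ${\bm\tau}_p$ sufficiently small (by \eqref{lem1-est} and Sobolev embedding), I would divide out and reduce \eqref{INI_pro-super} to the two scalar problems
\begin{equation*}
\partial_r f+\beta(r,\phi)\partial_\phi f=0\quad\text{in }\Omega,\qquad f|_{\Gamma_{\rm en}}=f_{\rm en},
\end{equation*}
for $f\in\{S,\Lambda\}$, where $\beta:=u_\phi^\star/(r u_r^\star)$ satisfies $\|\beta\|_{H^3_\ast(\Omega)}\le C{\bm\tau}_p$. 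Two structural facts are crucial. Axisymmetry forces $u_\phi^\star=0$ on the axis $\{\phi=0\}$, and on $\Gamma_{\rm w}$ the conditions $\partial_\phi\varphi_\ast=0$ together with $\psi_\ast=0$ (which yields $\partial_r(r\psi_\ast)=0$ along $\Gamma_{\rm w}$) force $u_\phi^\star=0$ there. Consequently both $\{\phi=0\}$ and $\{\phi=\phi_0\}$ are characteristics of $\partial_r+\beta\partial_\phi$, so the characteristic ODE $\frac{d\phi}{dr}=\beta(r,\phi)$ generates a diffeomorphism of $[0,\phi_0]$ for every $r\in[r_{\rm en},r_{\rm ex}]$.

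Denoting by $\Theta(r,\phi)$ the inverse flow map (the initial angle of the characteristic through $(r,\phi)$), the difference $\Theta-\phi$ satisfies $\partial_r(\Theta-\phi)+\beta\partial_\phi(\Theta-\phi)=-\beta$ with zero trace on $\Gamma_{\rm en}$, so a standard linear transport estimate in $H^k_\ast$ yields $\|\Theta-\phi\|_{H^4_\ast(\Omega)}\le C{\bm\tau}_p$. The explicit representations
\begin{equation*}
S(r,\phi)=S_{\rm en}(\Theta(r,\phi)),\qquad \Lambda(r,\phi)=r_{\rm en}\sin\Theta(r,\phi)\,w_{\rm en}(\Theta(r,\phi))
\end{equation*}
then give an axisymmetric solution, and composition together with Moser-type estimates delivers $\|S-S_0\|_{H^4_\ast(\Omega)}\le C{\bm\tau}_p$. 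The compatibility conditions $\partial_\phi S=0$ and $\partial_\phi\Lambda=0$ on $\Gamma_{\rm w}$ follow because $\Gamma_{\rm w}$ is a characteristic and, by \eqref{super-comp}, $\frac{dS_{\rm en}}{d\phi}=\frac{dw_{\rm en}}{d\phi}=0$ at $\phi=\phi_0$; uniqueness is immediate since the reduced equation is a non-characteristic linear transport problem.

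The main obstacle is the bound on $\mathcal{V}:=\Lambda/(r\sin\phi)$, because the denominator degenerates on the axis. I would write
\begin{equation*}
\mathcal{V}(r,\phi)=\frac{r_{\rm en}}{r}\cdot\frac{\sin\Theta(r,\phi)}{\sin\phi}\cdot w_{\rm en}(\Theta(r,\phi)),
\end{equation*}
and use that $\{\phi=0\}$ being a characteristic forces $\Theta(r,0)=0$, so $\Theta(r,\phi)=\phi\,\Xi(r,\phi)$ with $\Xi$ smooth and $\Xi(r,0)>0$; hence $\sin\Theta/\sin\phi$ extends smoothly across the axis. Combined with $w_{\rm en}(0)=0$, this yields a smooth extension of $\mathcal{V}$ past $\{\phi=0\}$, and the $H^3_\ast$--estimate in \eqref{super-est-trans} follows by the chain rule, the loss of exactly one derivative relative to $S$ reflecting the single factor of $\sin\phi$ in the denominator. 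Choosing ${\bm\tau}_4\in(0,{\bm\tau}_1]$ small enough to absorb the Moser and transport constants then completes the argument.
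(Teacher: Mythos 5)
Your route is genuinely different from the paper's. Both rest on conservation of $S$ and $\Lambda$ along streamlines and on the boundary--characteristic structure (correctly, $u_\phi^\star=0$ on the axis by axisymmetry and on $\Gamma_{\rm w}$ from $\partial_\phi\varphi_\ast=\psi_\ast=0$ there), and your treatment of the axis singularity in $\mathcal{V}=\Lambda/(r\sin\phi)$ via $\Theta(r,0)=0$ together with $w_{\rm en}(0)=0$ is the right idea. But the paper builds the solution through the \emph{stream function} $\zeta$ of the continuity equation (this is what the citation to \cite[Lemma 4.3]{park2020transonic} provides): $\partial_\phi\zeta = r^2\sin\phi\,\rho q_r^\star$, $\partial_r\zeta = -r\sin\phi\,\rho q_\phi^\star$, and then $S=\mathcal{S}(\zeta)$ and similarly for $\Lambda$, with one-dimensional profiles fixed by the entrance data. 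You instead integrate the characteristic ODE for $d\phi/dr=\beta$ and compose with the backward flow map $\Theta$. These parametrize the same family of streamlines, but the \emph{estimates} they yield differ by a derivative.

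The concrete gap is the assertion that ``a standard linear transport estimate in $H^k_\ast$ yields $\|\Theta-\phi\|_{H^4_\ast(\Omega)}\le C{\bm\tau}_p$.'' There is no such estimate. The space $H^4_\ast$ requires $\partial_r(\Theta-\phi)\in H^3(\Omega)$, but the equation gives $\partial_r(\Theta-\phi)=-\beta\bigl(1+\partial_\phi(\Theta-\phi)\bigr)$, and the Jacobian $\partial_\phi\Theta$ itself solves a transport equation with source $(\partial_\phi\beta)\partial_\phi\Theta$, so a direct transport estimate places it one $\phi$-derivative below $\beta$. Consequently $\partial_r\Theta$ lands in $H^2$, not $H^3$, under the available regularity of $\beta=q_\phi^\star/(rq_r^\star)$, and the bound for $\|S-S_0\|_{H^4_\ast}$ in \eqref{super-est-trans} does not close. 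The stream function evades exactly this: $\partial_\phi\zeta$ is an explicit \emph{pointwise} algebraic expression in the mass flux $\rho q_r^\star$ rather than the solution of an evolution ODE, so $\zeta$ carries one more $\phi$-derivative than a flow-map estimate would detect. Indeed $\Theta=Z^{-1}(\zeta)$ with $Z:=\zeta|_{\Gamma_{\rm en}}$, which is how the extra regularity of $\Theta$ is actually seen; your proof never invokes this, nor the divergence structure of the continuity equation that produces it. To repair the argument you must either pass to the stream function as the paper does, or establish directly that the structure of $\mathrm{div}(\rho{\bf q}^\star)=0$ upgrades the regularity of the flow map by the missing derivative.
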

Since $S$ and $\Lambda$ are conserved along each streamline, one can find the solution to \eqref{INI_pro-super} by using the stream function for axisymmetric flows.
We skip the proof of Lemma \ref{super-lemma-2} because it can be verified by a similar way of  \cite[Proof of Lemma 4.3]{park2020transonic}.

For the axisymmetric solution $(\varphi_{\ast},\Phi_{\ast},\psi_{\ast}{\bf e}_{\theta})$ to the boundary value problem \eqref{super-sys} associated with $(S_{\ast},\frac{\Lambda_{\ast}}{r\sin\phi}{\bf e}_{\theta})$, let $(S,\Lambda)$ be the axisymmetric solution to the initial value problem \eqref{INI_pro-super} associated with $(\varphi_{\ast},\Phi_{\ast},\psi_{\ast}{\bf e}_{\theta})$.
We define an iteration mapping $\mathcal{I}_t:\mathcal{J}(\delta_1)\to H_{\ast}^4({\Omega})\times H_{\ast}^3(\Omega;\mathbb{R}^3)$ by 
\begin{equation*}
\mathcal{I}_t\left(S_{\ast},\frac{\Lambda_{\ast}}{r\sin\phi}{\bf e}_{\theta}\right)=\left(S,\frac{\Lambda}{r\sin\phi}{\bf e}_{\theta}\right),
\end{equation*}
and  choose $\delta_1$ as 
\begin{equation*}
\delta_1:=C_{\diamond}
\end{equation*}
for the constant $C_{\diamond}>0$ in \eqref{super-est-trans} so that 
the mapping $\mathcal{I}_t$ maps $\mathcal{J}(\delta_1)$ into itself.
Applying the Schauder fixed point theorem gives the mapping $\mathcal{I}_t$ has a fixed point $(S_{\sharp},\frac{\Lambda_{\sharp}}{r\sin\phi}{\bf e}_{\theta})\in\mathcal{J}(\delta_1)$. 
Then, for the axisymmetric solution $(\varphi_{\sharp},\Phi_{\sharp},\psi_{\sharp}{\bf e}_{\theta})$  to \eqref{super-sys} associated with $(S_{\sharp},\frac{\Lambda_{\sharp}}{r\sin\phi}{\bf e}_{\theta})$, it is clear that $(\varphi_{\sharp},\Phi_{\sharp},\psi_{\sharp}{\bf e}_{\theta},S_{\sharp},\Lambda_{\sharp})$  
is an axisymmetric solution to the boundary value problem \eqref{HD-sys} with  \eqref{HD-bd-sup} and satisfies the estimate \eqref{est-super-thm}.
%
By using the standard contraction argument, one can prove that the solution is unique if ${\bm\tau}_p\le{\bm\tau}_\star$   for a sufficiently small constant ${\bm\tau}_\star\in(0,{\bm\tau}_4]$ depending only on the data and $(L,\delta_1,\delta_2,\delta_3)$.
This finishes the proof of Theorem \ref{M-Thm-super}.\qed 

\section{Proof of Lemma \ref{lema-po}}\label{sec-lem-last} 
To complete the proof of the main theorem, it remains to prove Lemma \ref{lema-po}.
%
Set $\chi:=\varphi-\bar{\varphi}$ and $\Psi:=\Phi-\bar{\Phi}$ to linearize 
\begin{equation}\label{C-N-lin}
\left\{\begin{split}
&\mbox{div}\left(\varrho(S,\Phi,{\bf q}){\bf q}\right)=0,\\
&\Delta\Phi=\varrho(S,\Phi,{\bf q})-b,
\end{split}\right.
\end{equation}
and set 
\begin{equation*}
\begin{split}
&(\partial_1,\partial_2):=(\partial_r,\partial_\phi),\quad D:=(\partial_r,\frac{1}{r}\partial_{\phi},\frac{1}{r\sin\phi}\partial_{\theta}),\\
&(q_1,q_2):=({\bf q}\cdot{\bf e}_r,{\bf q}\cdot{\bf e}_{\phi}),\quad 
c^2(\Phi,{\bf q}):=(\gamma-1)\left(\Phi-\frac{1}{2}|{\bf q}|^2\right)
\end{split}
\end{equation*}
for simplicity of notations.
Then, by a direct computation, we can rewrite the first equation $\mbox{div}\left(\varrho(S,\Phi,{\bf q}){\bf q}\right)=0$  as 
\begin{equation}\label{lin-1}
\begin{split}
\sum_{i,j=1}^2a_{ij}\partial_{ij}\varphi+\sum_{i=1}^2 a_i\partial_i\varphi
=&-\frac{1}{S}\left(\Phi-\frac{1}{2}|{\bf q}|^2\right)\frac{DS\cdot{\bf q}}{q_1^2-c^2(\Phi,{\bf q})}+\frac{D\Phi\cdot{\bf q}}{q_1^2-c^2(\Phi,{\bf q})}\\
&-\frac{{\bf q}^T\cdot{\bf q}\cdot D{\bf t}}{q_1^2-c^2(\Phi,{\bf q})}+\frac{c^2(\mbox{div}_{\bf x}{\bf t})}{q_1^2-c^2(\Phi,{\bf q})}
\end{split}
\end{equation}
for $a_{ij}=a_{ij}(\Phi,{\bf q})$ and $a_i=a_i(\Phi,{\bf q})$ $(i,j=1,2)$ defined by
\begin{equation*}
\begin{split}
&a_{11}:=1,\quad a_{12}(\Phi,{\bf q}):=\frac{1}{r}\frac{q_1q_2}{q_1^2-c^2(\Phi,{\bf q})},\quad a_{21}(\Phi,{\bf q}):=\frac{1}{r}\frac{q_2q_1}{q_1^2-c^2(\Phi,{\bf q})},\\
& a_{22}(\Phi,{\bf q}):=\frac{1}{r^2}\frac{q_2^2-c^2(\Phi,{\bf q})}{q_1^2-c^2(\Phi,{\bf q})},\\
&a_1(\Phi,{\bf q}):=-\frac{2c^2(\Phi,{\bf q})}{r}\frac{1}{q_1^2-c^2(\Phi,{\bf q})},\\
& a_2(\Phi,{\bf q}):=-\frac{1}{r^2}\frac{q_1q_2}{q_1^2-c^2(\Phi,{\bf q})}-\frac{c^2(\Phi,{\bf q})\cos\phi}{r^2\sin\phi}\frac{1}{q_1^2-c^2(\Phi,{\bf q})}.
\end{split}
\end{equation*}
The background solution $(\bar{\varphi},\bar{\Phi})$ satisfies 
\begin{equation}\label{back-1}
\partial_{rr}\bar{\varphi}+\bar{a}_1\partial_r\bar{\varphi}
=\frac{(\partial_r\bar{\varphi})(\partial_r\bar{\Phi})}{|\partial_r\bar{\varphi}|^2-c^2(\bar{\Phi},\partial_r\bar{\varphi})}
\end{equation}
for $\bar{a}_1$ defined by 
\begin{equation}\label{aij-back}
\bar{a}_1:=a_1(\bar{\Phi},\nabla\bar{\varphi}).
\end{equation}
Then,
subtracting \eqref{back-1} from \eqref{lin-1} gives 
\begin{equation}\label{lin-2}
\begin{split}
\sum_{i,j=1}^2a_{ij}\partial_{ij}\chi+\sum_{i=1}^2 a_i\partial_i\chi
=&\,\mathfrak{F}_1(S,DS, \Psi+\bar{\Phi},D\chi+D\bar{\varphi}+{\bf t})\\
&+\mathfrak{F}_2(\Psi+\bar{\Phi},D(\Psi+\bar{\Phi}),D\chi+D\bar{\varphi}+{\bf t})\\
&-\mathfrak{F}_2(\bar{\Phi},{\bf 0},D\bar{\varphi})
\end{split}
\end{equation}
with $\mathfrak{F}_k$ $(k=1,2)$ defined by 
\begin{equation*}
\begin{split}
&\mathfrak{F}_1(\eta,{\bf w},z,{\bf q}):=-\frac{1}{\eta}\left(z-\frac{1}{2}|{\bf q}|^2\right)\frac{{\bf w}\cdot{\bf q}}{q_1^2-c^2(z,{\bf q})}
-\frac{{\bf q}^T\cdot{\bf q}\cdot D{\bf t}}{q_1^2-c^2(z,{\bf q})}+\frac{c^2(\mbox{div}_{\bf x}{\bf t})}{q_1^2-c^2(z,{\bf q})},\\
&\mathfrak{F}_2(z,{\bf p},{\bf q}):=\frac{{\bf p}\cdot{\bf q}}{q_1^2-c^2(z,{\bf q})}-a_1(z,{\bf q})\partial_r\bar{\varphi}
\end{split}
\end{equation*}
for $\eta, z\in\mathbb{R}$, ${\bf w}, {\bf p},{\bf q}\in\mathbb{R}^3$.
For ${\bf V}_0:=(\bar{\Phi},D\bar{\Phi},D\bar{\varphi})$,
we set 
\begin{equation}\label{alpha}
\begin{split}
\alpha_1:=\partial_{q_1}\mathfrak{F}_2({\bf V}_0)
=&\frac{-(\partial_r\bar{\varphi})^2\left(\gamma(\partial_r\bar{\Phi})+\frac{2(\gamma-1)}{r}(\partial_r\bar{\varphi})^2\right)}{(|\partial_r\bar{\varphi}|^2-c^2(\bar{\Phi},\partial_r\bar{\varphi}))^2}\\
&-\frac{c^2(\bar{\Phi},\partial_r\bar{\varphi})\left(\partial_r\bar{\Phi}-\frac{4}{r}(\partial_r\bar{\varphi})^2\right)}{(|\partial_r\bar{\varphi}|^2-c^2(\bar{\Phi},\partial_r\bar{\varphi}))^2},\quad\alpha_2:=0,\\
b_1:=\partial_{p_1}\mathfrak{F}_2({\bf V}_0)=&\frac{\partial_r\bar{\varphi}}{|\partial_r\bar{\varphi}|^2-c^2(\bar{\Phi},\partial_r\bar{\varphi})},\quad
b_2:=0,\\
c:=\partial_z\mathfrak{F}_2({\bf V}_0)=&\frac{(\gamma-1)\nabla\bar{\varphi}\cdot\nabla\bar{\Phi}}{(|\partial_r\bar{\varphi}|^2-c^2(\bar{\Phi},\partial_r\bar{\varphi}))^2}+\frac{2(\gamma-1)(\partial_r\bar{\varphi})^3}{r(|\partial_r\bar{\varphi}|^2-c^2(\bar{\Phi},\partial_r\bar{\varphi}))^2}
\end{split}
\end{equation}
to rewrite \eqref{lin-2} as
\begin{equation}\label{lin-3}
\begin{split}
\mathscr{L}_1(\chi,\Psi)=\mathcal{F}(S,DS, \Psi,D\Psi,D\chi,{\bf t})
\end{split}
\end{equation}
for $\mathscr{L}_1$ and $\mathcal{F}$ defined by 
\begin{equation*}
\begin{split}
&\mathscr{L}_1(\chi,\Psi):=\sum_{i,j=1}^2a_{ij}\partial_{ij}\chi+\sum_{i=1}^2 ((a_i-\alpha_i)\partial_i\chi-b_i\partial_i\Psi)-c\Psi,\\
&
	\begin{split}
	\mathcal{F}(S,DS, \Psi,D\Psi,D\chi,{\bf t})
	&:=\mathfrak{F}_1(S,DS, \Psi+\bar{\Phi},D\chi+D\bar{\varphi}+{\bf t})\\
	&\quad+\mathfrak{F}_2(\Psi+\bar{\Phi},D(\Psi+\bar{\Phi}),D\chi+D\bar{\varphi}+{\bf t})\\
	&\quad-\mathfrak{F}_2(\bar{\Phi},{\bf 0},D\bar{\varphi})-\sum_{i=1}^2 (\alpha_i\partial_i\chi+b_i\partial_i\Psi)-c\Psi.
	\end{split}
\end{split}
\end{equation*}

Next, we rewrite the equation $\Delta\Phi=\varrho(S,\Phi,{\bf q})-b$ as 
\begin{equation}\label{lin-4}
\mathscr{L}_2(\chi,\Psi)=\mathfrak{f}(S,\Psi,D\chi,{\bf t})
\end{equation}
for $\mathscr{L}_2$ and $\mathfrak{f}$ defined by 
\begin{equation}\label{l2l2}
\begin{split}
&\mathscr{L}_2(\chi,\Psi):=\sum_{i,j=1}^2g_{ij}\partial_{ij}\Psi+\sum_{i=1}^2g_i\partial_i\Psi
-g_0\Psi-h_1\partial_r\chi,\\
&\mathfrak{f}(S,\Psi,D\chi,{\bf t})
:=B(S,\Psi+\bar{\Phi},D\chi+D\bar{\varphi},{\bf t})-B(S_0,\bar{\Phi},D\bar{\varphi},{\bf0})\\
&\qquad\qquad\quad\qquad-(b-b_0)
-g_0\Psi-h_1\partial_r\chi,
\end{split}
\end{equation}
where $g_{ij}$ $(i,j=1,2$), $g_i$ ($i=1,2$), $B$, $g_0$, and $h_1$ are defined by 
\begin{equation}\label{gij}
\begin{split}
&\left[\begin{array}{cc}
	g_{11}&g_{12}\\
	g_{21}&g_{22}\end{array}\right] :=\left[\begin{array}{cc}
	1&0\\
	0&\frac{1}{r^2}\end{array}\right],\quad g_1:=\frac{2}{r},\quad g_2:=\frac{\cos\phi}	{r^2\sin\phi},\\
&B(\eta,z,{\bf s},{\bf t}):=\varrho(\eta,z,{\bf s}+{\bf t})\mbox{ for $\eta, z\in\mathbb{R}$, ${\bf s}, {\bf t}\in\mathbb{R}^3$},\\
&g_0:=\partial_zB(S_0,\bar{\Phi},D\bar{\varphi},{\bf0})=\frac{1}{\gamma S_0}\varrho^{2-\gamma}(S_0,\bar{\Phi},D\bar{\varphi}),\\
&h_1:=\partial_{s_1}B(S_0,\bar{\Phi},D\bar{\varphi},{\bf0})=-\frac{1}{\gamma S_0}\varrho^{2-\gamma}(S_0,\bar{\Phi},D\bar{\varphi})\partial_r\bar{\varphi}
\end{split}
\end{equation}
for $\varrho$ given in \eqref{def-H-G}.

Then, by \eqref{lin-3} and \eqref{lin-4}, the system \eqref{C-N-lin} can be rewritten as 
\begin{equation}\label{lin-pps}
\left\{\begin{split}
&\mathscr{L}_1(\chi,\Psi)=\mathcal{F}(S,DS, \Psi,D\Psi,D\chi,{\bf t}),\\
&\mathscr{L}_2(\chi,\Psi)=\mathfrak{f}(S,\Psi,D\chi,{\bf t}).
\end{split}\right.
\end{equation}
Also, one can directly check that the boundary conditions in \eqref{pp-prob-sup} are rewritten as 
\begin{equation}\label{lin-pps-bd}
\left\{\begin{split}
\chi=\varphi_{\rm en}-\bar{\varphi},\,\,\nabla\chi\cdot{\bf e}_r=-\partial_r\bar{\varphi}(r_{\rm en})+u_{\rm en}-[\nabla\times(\psi_{\ast}{\bf e}_{\theta})]\cdot{\bf e}_r\,\, &\mbox{on}\,\,\Gamma_{\rm en},\\
\nabla\Psi\cdot{\bf e}_r=E_{\rm en}-E_0\,\, &\mbox{on}\,\,\Gamma_{\rm en},\\
\nabla\chi\cdot{\bf e}_{\phi}=0,\,\,
\nabla\Psi\cdot{\bf e}_{\phi}=0\,\,&\mbox{on}\,\,\Gamma_{\rm w},\\
\nabla\Psi\cdot{\bf e}_r=\Phi_{\rm ex}-\bar{\Phi}\,\,&\mbox{on}\,\,\Gamma_{\rm ex}.
\end{split}\right.
\end{equation}
%
For a fixed $(\chi_{\ast},\Psi_{\ast})\in\mathcal{J}(\delta_2)$, define $\mathscr{L}_1^{(\chi_{\ast},\Psi_{\ast})}$ and ${\bf t}^{\ast}$ by
\begin{equation*}\label{fix-lin-pro}
\begin{split}
&\mathscr{L}_1^{(\chi_{\ast},\Psi_{\ast})}(\chi,\Psi):=\sum_{i,j=1}^2\tilde{a}_{ij}\partial_{ij}\chi+\sum_{i=1}^2 ((\tilde{a}_i-\alpha_i)\partial_i\chi-b_i\partial_i\Psi)-c\Psi,\\
&{\bf t}^{\ast}:={\bf t}\left(\nabla\times(\psi_{\ast}{\bf e}_{\theta}),\frac{\Lambda_{\ast}}{r\sin\phi}\right)
\end{split}
\end{equation*}
with
\begin{equation*}
\begin{split}
&\tilde{a}_{ij}:=a_{ij}(\Psi_{\ast}+\bar{\Phi},\tilde{\bf q}),\quad \tilde{a}_{i}:=a_{i}(\Psi_{\ast}+\bar{\Phi},\tilde{\bf q}),\\
&\tilde{\bf q}:={\bf q}\left(\nabla(\chi_{\ast}+\bar{\varphi}),\nabla\times(\psi_{\ast}{\bf e}_{\theta}),\frac{\Lambda_{\ast}}{r\sin\phi}\right).
\end{split}
\end{equation*}
We first solve the following problem:
\begin{equation}\label{fix-lin-pps}
\left\{\begin{split}
&\mathscr{L}_1^{(\chi_{\ast},\Psi_{\ast})}(\chi,\Psi)=\mathcal{F}(S_{\ast},DS_{\ast}, \Psi_{\ast},D\Psi_{\ast},D\chi_{\ast},{\bf t}^{\ast})\\
&\mathscr{L}_2(\chi,\Psi)=\mathfrak{f}(S_{\ast},\Psi_{\ast},D\chi_{\ast},{\bf t}^{\ast})
\end{split}\right.\quad\mbox{in }\Omega
\end{equation}
with the boundary conditions \eqref{lin-pps-bd}.

To simplify the boundary conditions of $\chi$ and $\Psi$ in \eqref{lin-pps-bd}, define functions $\chi_{bd}$, $\chi_{h}$, $\Psi_{bd}$, and $\Psi_{h}$ by 
\begin{equation*}
\begin{split}
&\chi_{bd}(r,\phi):=\eta(r)\left(\varphi_{\rm en}(\phi)-\bar{\varphi}(r_{\rm en})\right)=\eta(r)\left(\varphi_{\rm en}(\phi)-\frac{m_0^2}{r_{\rm en}^2\rho_0}\right),\\
&\chi_{h}(r,\phi):=\chi(r,\phi)-\chi_{bd}(r,\phi),\\
&\Psi_{bd}(r,\phi):=r\left[\frac{r_{\rm ex}(E_{\rm en}(\phi)-E_0)-r_{\rm en}(\Phi_{\rm ex}(\phi)-\bar{\Phi}(r_{\rm ex},\phi))}{r_{\rm ex}-r_{\rm en}}\right]\\
	&\qquad\qquad\quad+\frac{r^2}{2}\left[\frac{(\Phi_{\rm ex}(\phi)-\bar{\Phi}(r_{\rm ex},\phi))-(E_{\rm en}(\phi)-E_0)}{r_{\rm ex}-r_{\rm en}}\right],\\
&\Psi_h(r,\phi):=\Psi(r,\phi)-\Psi_{bd}(r,\phi)
\end{split}
\end{equation*}
for a smooth function $\eta=\eta(r)$ satisfying the following properties:
\begin{equation*}
\begin{split}
&0\le \eta(r)\le 1\mbox{ for }r\in[r_{\rm en},r_{\rm ex}],\\
&\eta(r)=\left\{\begin{split}
1\quad&\mbox{on }r=r_{\rm en},\\
0\quad&\mbox{on }r=r_{\rm ex},
\end{split}\right.\quad \mbox{ and }\left|\frac{d^k\eta}{dr^k}\right|\le 10\left(1+\frac{1}{L}\right),\,k=1,2,3,4.
\end{split}
\end{equation*}
One can easily check that if $(\chi_h,\Psi_h)$ satisfies 
\begin{equation}\label{fix-lin-ho}
\left\{\begin{split}
\mathscr{L}_1^{(\chi_{\ast},\Psi_{\ast})}(\chi_h,\Psi_h)=\mathcal{F}_h,\quad \mathscr{L}_2(\chi_h,\Psi_h)=\mathfrak{f}_h\quad&\mbox{in}\,\,\Omega,\\
\chi_h=0,\quad\nabla\chi_h\cdot{\bf e}_r=\mathfrak{g}_h,\quad
\nabla\Psi_h\cdot{\bf e}_r=0\quad &\mbox{on}\,\,\Gamma_{\rm en},\\
\nabla\chi_h\cdot{\bf e}_\phi=0,\quad
\nabla\Psi_h\cdot{\bf e}_{\phi}=0\quad&\mbox{on}\,\,\Gamma_{\rm w},\\
\nabla\Psi_h\cdot{\bf e}_r=0\quad&\mbox{on}\,\,\Gamma_{\rm ex}
\end{split}\right.
\end{equation}
for $\mathcal{F}_h$, $\mathfrak{f}_h$, and $\mathfrak{g}_h$ defined by 
\begin{equation*}
\begin{split}
\mathcal{F}_h&:=\mathcal{F}(S_{\ast},DS_{\ast}, \Psi_{\ast},D\Psi_{\ast},D\chi_{\ast},{\bf t}^{\ast})-\mathscr{L}_1^{(\chi_{\ast},\Psi_{\ast})}(\chi_{bd},\Psi_{bd}),\\
\mathfrak{f}_h&:=\mathfrak{f}(S_{\ast},\Psi_{\ast},D\chi_{\ast},{\bf t}^{\ast})-\mathscr{L}_2(\chi_{bd},\Psi_{bd}),\\
\mathfrak{g}_h&:=-\partial_r\bar{\varphi} (r_{\rm en})+u_{\rm en}-[\nabla\times(\psi_{\ast}{\bf e}_{\theta})]\cdot{\bf e}_r-\nabla\chi_{bd}\cdot{\bf e}_r,
\end{split}
\end{equation*}
then $(\chi,\Psi)$ satisfies \eqref{lin-pps-bd}-\eqref{fix-lin-pps}. 
Before we solve \eqref{fix-lin-ho}, we point out some properties in the following lemma.
\begin{lemma}\label{Lem51}
For a constant $L(=r_{\rm ex}-r_{\rm en})\in(0,r_{\ast}-r_{\rm en}]$ to be determined, the following properties hold:
\begin{itemize}
\item[(a)]  $\bar{a}_1$, $\alpha_1$, $b_1$, $c$, $g_{11}$, $g_{22}$, $g_1$, $g_0$, and $h_1$ given in \eqref{aij-back}, \eqref{alpha}, and \eqref{gij} are smooth in $\overline{\Omega}$, and for each $l\in\mathbb{N}$, there exists a constant $M_l>0$ depending only on the data and $l$ to satisfy
\begin{equation*}
\|(\bar{a}_1,\alpha_1,b_1,c,g_{11}, g_{22}, g_1,g_0,h_1)\|_{C^l(\overline{\Omega})}\le M_l.
\end{equation*}

\item[(b)] There exists a constant $\nu\in(0,1)$ depending only on the data so that the inequality
\begin{equation*}
\nu\le -\bar{a}_{22}=\frac{1}{r^2}\frac{c^2(\bar{\Phi},\nabla\bar{\varphi})}{(\partial_r\bar{\varphi})^2-c^2(\bar{\Phi},\nabla\bar{\varphi})}\le \frac{1}{\nu}
\end{equation*}
holds in $\overline{\Omega}$.

\item[(c)] There exists a small constant ${\bm \tau}_{\flat}>0$ depending only on the data and $(\delta_1,\delta_2,\delta_3)$ so that if 
$${\bm\tau}_p\le {\bm\tau}_{\flat},$$
then the following estimates hold:
$$\frac{\nu}{2}\le -\bar{a}_{22}=\frac{1}{r^2}\frac{c^2(\bar{\Phi},\nabla\bar{\varphi})}{(\partial_r\bar{\varphi})^2-c^2(\bar{\Phi},\nabla\bar{\varphi})}\le \frac{2}{\nu},$$
and
\begin{equation}\label{est-coeff}
\begin{split}
&\|\tilde{a}_{ij}-\bar{a}_{ij}\|_{H^3_{\ast}(\Omega)}\le C\left(\|\chi_{\ast}\|_{H^4_{\ast}(\Omega)}+\|\Psi_{\ast}\|_{H^4_{\ast}(\Omega)}+\mathfrak{V}_2\right),\\
&\|\mathcal{F}_h\|_{H^3_{\ast}(\Omega)}\le  C\left({\bm\tau}_p+\|\chi_{\ast}\|^2_{H^4_{\ast}(\Omega)}+\|\Psi_{\ast}\|^2_{H^4_{\ast}(\Omega)}+\mathfrak{V}_3\right),\\
&\|\mathfrak{f}_h\|_{H^2(\Omega)}\le C\left({\bm\tau}_p+\|\chi_{\ast}\|^2_{H^4_{\ast}(\Omega)}+\|\Psi_{\ast}\|^2_{H^4_{\ast}(\Omega)}+\mathfrak{V}_3\right),\\
&\|\mathfrak{g}_h\|_{H^3(\Gamma_{\rm en})}\le C\left({\bm\tau}_p+\mathfrak{V}_1\right)
\end{split}
\end{equation}
for a constant $C>0$ depending only on the data. 
In \eqref{est-coeff}, $\mathfrak{V}_1$, $\mathfrak{V}_2$, and $\mathfrak{V}_3$ are given by 
\begin{equation*}
\begin{split}
&\mathfrak{V}_1:=\|\psi_{\ast}{\bf e}_{\theta}\|_{H^5_{\ast}(\Omega;\mathbb{R}^3)},\\
&\mathfrak{V}_2:=\|\psi_{\ast}{\bf e}_{\theta}\|_{H^5_{\ast}(\Omega;\mathbb{R}^3)}+\left\|\frac{\Lambda_{\ast}}{r\sin\phi}{\bf e}_{\theta}\right\|_{H^3_{\ast}(\Omega;\mathbb{R}^3)},\\
&\mathfrak{V}_3:=\|\psi_{\ast}{\bf e}_{\theta}\|_{H^5_{\ast}(\Omega;\mathbb{R}^3)}+\left\|\frac{\Lambda_{\ast}}{r\sin\phi}{\bf e}_{\theta}\right\|_{H^3_{\ast}(\Omega;\mathbb{R}^3)}+\|S_{\ast}-S_0\|_{H^4_{\ast}(\Omega)}.
\end{split}
\end{equation*}
\end{itemize}
\end{lemma}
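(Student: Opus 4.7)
The plan is to establish parts (a)--(c) in order, drawing on Lemma \ref{super-lemma} for the first two and on Taylor expansion combined with Sobolev algebra on $H^k_{\ast}(\Omega)$ for the third.

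For part (a), Lemma \ref{super-lemma} gives $(\bar{\rho},\bar{E})\in [C^{\infty}([r_{\rm en},r_{\ast}])]^2$, so $\bar{\varphi}$ and $\bar{\Phi}$ are smooth radial functions on $\overline{\Omega}$ whenever $r_{\rm ex}\le r_{\ast}$. Each of the listed coefficients $\bar{a}_1,\alpha_1,b_1,c,g_{11},g_{22},g_1,g_0,h_1$ is a rational function of $(\partial_r\bar{\varphi},\partial_r\bar{\Phi},c^2(\bar{\Phi},\partial_r\bar{\varphi}),1/r)$; crucially none of them involves $\cot\phi$, so no singularity on the axis appears at this stage. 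The denominator $(\partial_r\bar{\varphi})^2-c^2(\bar{\Phi},\partial_r\bar{\varphi})$ is uniformly bounded away from zero by the supersonic condition $\bar{\delta}\le\bar{\rho}\le\rho_s-\bar{\delta}$, so smoothness and the uniform $C^l$ bound $M_l$ follow from compactness of $\overline{\Omega}$. For part (b), direct substitution with $\bar{q}_2=0$ (the background velocity is radial) gives $\bar{a}_{22}=-\frac{c^2(\bar{\Phi},\partial_r\bar{\varphi})}{r^2((\partial_r\bar{\varphi})^2-c^2)}$; both the numerator and denominator are strictly positive and uniformly bounded on $\overline{\Omega}$ by the same Lemma \ref{super-lemma} bounds, yielding the claim.

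Part (c) is the heart of the lemma. The strict inequality $\nu/2\le -\bar{a}_{22}\le 2/\nu$ restates (b) and holds without any smallness assumption. The coefficient estimate for $\tilde{a}_{ij}-\bar{a}_{ij}$ follows by writing this difference as a first-order Taylor remainder in the perturbation variables $(\chi_{\ast},\Psi_{\ast},\psi_{\ast}{\bf e}_{\theta},\Lambda_{\ast}/(r\sin\phi))$, applying the embedding $H^3_{\ast}(\Omega)\hookrightarrow H^2(\Omega)\hookrightarrow L^{\infty}(\Omega)$ in three dimensions, and using the Banach-algebra property of $H^2$ to control products and smooth compositions whose denominators stay uniformly positive under the smallness ${\bm\tau}_p\le{\bm\tau}_{\flat}$. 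The estimates for $\mathcal{F}_h$ and $\mathfrak{f}_h$ rely on the key cancellation built into the definitions: the coefficients $(\alpha_i,b_i,c)$ in \eqref{alpha} and $(g_0,h_1)$ in \eqref{gij} were chosen precisely to subtract the linearization of $\mathfrak{F}_2$ and of $B$ at the background. Consequently Taylor's theorem expresses $\mathcal{F}(S_{\ast},DS_{\ast},\Psi_{\ast},D\Psi_{\ast},D\chi_{\ast},{\bf t}^{\ast})$ as a sum of genuinely quadratic terms in $(\chi_{\ast},\Psi_{\ast},D\chi_{\ast},D\Psi_{\ast})$ plus terms linear (and higher order) in $(S_{\ast}-S_0,DS_{\ast},{\bf t}^{\ast},D{\bf t}^{\ast})$; the $H^3_{\ast}$-algebra gives the bound by $\|\chi_{\ast}\|^2_{H^4_{\ast}}+\|\Psi_{\ast}\|^2_{H^4_{\ast}}+\mathfrak{V}_3$. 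The subtracted pieces $\mathscr{L}_1^{(\chi_{\ast},\Psi_{\ast})}(\chi_{bd},\Psi_{bd})$ and $\mathscr{L}_2(\chi_{bd},\Psi_{bd})$ are controlled by $C{\bm\tau}_p$ from the explicit construction of $\chi_{bd},\Psi_{bd}$ together with the data bound $\|b-\bar{b}\|_{C^2}\le{\bm\tau}_p$ (the cutoff $\eta$ contributes the $(1+1/L)$ factor absorbed into $C$). For $\mathfrak{g}_h$, the decomposition $\mathfrak{g}_h=(u_{\rm en}-\frac{m_0}{r_{\rm en}^2\rho_0})-[\nabla\times(\psi_{\ast}{\bf e}_{\theta})]\cdot{\bf e}_r-\nabla\chi_{bd}\cdot{\bf e}_r$ yields a direct ${\bm\tau}_p$-bound on the first and third terms from the definition of ${\bm\tau}_p$, and a trace estimate $\|[\nabla\times(\psi_{\ast}{\bf e}_{\theta})]\cdot{\bf e}_r\|_{H^3(\Gamma_{\rm en})}\le C\|\psi_{\ast}{\bf e}_{\theta}\|_{H^5_{\ast}(\Omega;\mathbb{R}^3)}=C\mathfrak{V}_1$ for the second.

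The main obstacle will be the careful bookkeeping for $\mathcal{F}_h$: the nonlinearity $\mathfrak{F}_2$ depends both on the fixed background function $\partial_r\bar{\varphi}$ (through the term $-a_1(z,{\bf q})\partial_r\bar{\varphi}$) and on all three perturbed arguments, so one must disentangle the dependence on $(\chi_{\ast},\Psi_{\ast})$ from that on ${\bf t}^{\ast}$, verify that the mixed partials at the background match the chosen $(\alpha_1,b_1,c)$ exactly, and track how derivatives of the rational coefficients interact with the $H^3_{\ast}$ norm, where radial derivatives are weighted differently from transverse ones. Once this organization is in place, every resulting product is a standard Sobolev-algebra estimate.
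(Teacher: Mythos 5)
The paper does not give a proof of Lemma~\ref{Lem51}; it is treated as a routine coefficient/source-term estimate, so there is no ``paper proof'' to compare against. Your overall plan (smoothness of the background coefficients from Lemma~\ref{super-lemma} plus supersonicity for (a)--(b); Taylor remainder with the linearization $(\alpha_i,b_i,c)$, $(g_0,h_1)$ subtracted, Sobolev algebra in $H^2\hookrightarrow L^\infty$, and trace estimates for (c)) is the natural one and is essentially what a proof should look like, including the correct handling of $\chi_{bd},\Psi_{bd}$ and the cutoff $\eta$.

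There are, however, two concrete points you should tighten. First, you observe that the displayed inequality $\tfrac{\nu}{2}\le -\bar{a}_{22}\le \tfrac{2}{\nu}$ in part~(c) ``restates~(b) and holds without any smallness assumption''; that is literally true of what is printed, but it renders that line vacuous and disconnects it from the smallness hypothesis ${\bm\tau}_p\le{\bm\tau}_\flat$. Downstream (in the proof of Lemmas~\ref{lem-H1} and~\ref{lem28}) the quantity actually needed bounded above and below is the \emph{perturbed} coefficient $-\tilde{a}_{22}$ (e.g.\ the factors $\inf_{\overline{\mathcal{N}_L}}\{1,-\tilde{a}_{22}\}$ and $\inf_{\overline{\mathcal{N}_L}}\{1,-a_{22}^{(n)}/4\}$). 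The intended statement is almost certainly $\nu/2\le -\tilde{a}_{22}\le 2/\nu$, and your proof should close this loop: from the coefficient estimate $\|\tilde{a}_{ij}-\bar{a}_{ij}\|_{H^3_\ast}\le C(\|\chi_\ast\|_{H^4_\ast}+\|\Psi_\ast\|_{H^4_\ast}+\mathfrak{V}_2)\le C(\delta_1+\delta_2+\delta_3){\bm\tau}_p$ together with $H^3_\ast(\Omega)\hookrightarrow H^2(\Omega)\hookrightarrow L^\infty(\Omega)$ and part~(b), one gets the $\tilde{a}_{22}$ bound once ${\bm\tau}_p\le{\bm\tau}_\flat$ is sufficiently small. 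Without this step the lemma cannot serve its purpose.

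Second, the axis behavior is not merely ``bookkeeping.'' The quantities $\tilde{a}_{12},\tilde{a}_{21},\tilde{a}_{22}$ involve $q_2=\tilde{\bf q}\cdot{\bf e}_\phi$ and $|\tilde{\bf q}|^2$ which in turn contains $\left(\tfrac{\Lambda_\ast}{r\sin\phi}\right)^2$, and $\mathfrak{F}_1$ involves $D{\bf t}^\ast$ and $\operatorname{div}_{\bf x}{\bf t}^\ast$ with ${\bf t}^\ast$ carrying the $\tfrac{\Lambda_\ast}{r\sin\phi}{\bf e}_\theta$ factor. The norms in the statement are Cartesian Sobolev norms, and the iteration sets $\mathcal{J}(\delta_1),\mathcal{J}(\delta_3)$ are built precisely so that $\tfrac{\Lambda_\ast}{r\sin\phi}{\bf e}_\theta$ and $\psi_\ast{\bf e}_\theta$ are Cartesian $H^k_\ast$ vector fields; your argument should say explicitly that the algebraic combinations appearing in $\tilde{a}_{ij}$, $\mathcal{F}_h$, $\mathfrak{f}_h$ are polynomial or smooth-rational in those Cartesian fields (with uniformly nondegenerate denominators by supersonicity), so that no additional $1/\sin\phi$ factor survives and the Banach-algebra property of $H^2(\Omega)$ applies without encountering axis singularities. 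As written, ``every resulting product is a standard Sobolev-algebra estimate'' asserts the conclusion without exhibiting this cancellation, which is exactly the point the paper itself emphasizes as the delicate issue in the three-dimensional divergent geometry.
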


\begin{lemma}[A priori $H^1$-estimate]\label{lem-H1}
For ${\bm\tau}_{\flat}$ in Lemma \ref{Lem51},
suppose that 
$${\bm\tau}_p\in(0, {\bm\tau}_{\flat}].$$
For $r_{\ast}>0$ in Lemma \ref{super-lemma}, 
there exists a constant $L(=r_{\rm ex}-r_{\rm en})\in(0,r_{\ast}-r_{\rm en}]$ depending only on the data so that 
if $(\chi_h,\Psi_h)\in [C^2(\Omega)]^2$ solves the linear boundary value problem \eqref{fix-lin-ho} associated with $(\chi_{\ast},\Psi_{\ast})\in\mathcal{J}(\delta_2)$, then it satisfies the estimate 
\begin{equation*}
\|\chi_h\|_{H^1(\Omega)}+\|\Psi_h\|_{H^1(\Omega)}\le C\left(\|\mathcal{F}_h\|_{L^2(\Omega)}+\|\mathfrak{f}_h\|_{L^2(\Omega)}+\|\mathfrak{g}_h\|_{L^2({\Gamma_{\rm en}})}\right)
\end{equation*}
for a constant $C>0$ depending only on the data and $L$.
\end{lemma}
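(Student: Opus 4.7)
The plan is to view $\mathscr{L}_1^{(\chi_\ast,\Psi_\ast)}$ as a second-order hyperbolic operator for $\chi_h$ with $r$ playing the role of time-like variable (this is encoded in Lemma \ref{Lem51}(b), since $\tilde a_{11}\equiv 1>0$ while $\tilde a_{22}<-\nu/2<0$), and to view $\mathscr{L}_2$ as an elliptic operator for $\Psi_h$ carrying Neumann-type data on every face of $\partial\Omega$. Accordingly, I would run two separate multiplier arguments, a hyperbolic one for $\chi_h$ and an elliptic one for $\Psi_h$, and close the coupled inequalities by taking $L=r_{\rm ex}-r_{\rm en}$ sufficiently small.

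For the hyperbolic part I would introduce the slice energy
\begin{equation*}
E(R):=\int_{\Omega\cap\{r=R\}}\bigl(|\partial_r\chi_h|^2+(-\tilde a_{22})|\partial_\phi\chi_h|^2\bigr)\,dS,
\end{equation*}
multiply $\mathscr{L}_1^{(\chi_\ast,\Psi_\ast)}(\chi_h,\Psi_h)=\mathcal{F}_h$ by $\partial_r\chi_h$, and integrate over $\Omega_R:=\Omega\cap\{r<R\}$ against the axisymmetric measure $r^2\sin\phi\,dr\,d\phi\,d\theta$. Integration by parts on the principal part produces $\tfrac12(E(R)-E(r_{\rm en}))$ modulo commutators controlled by $\int_{r_{\rm en}}^R E(s)\,ds$; the boundary contributions on $\Gamma_{\rm w}$ vanish by $\nabla\chi_h\cdot{\bf e}_\phi=0$, those on the axis $\{\phi=0\}$ vanish via the $\sin\phi$ weight (which simultaneously tames the $\cot\phi$-type factor in $a_2$), and the $\Gamma_{\rm en}$-term collapses to a constant multiple of $\|\mathfrak{g}_h\|_{L^2(\Gamma_{\rm en})}^2$ because $\chi_h|_{\Gamma_{\rm en}}\equiv 0$ forces $\partial_\phi\chi_h|_{\Gamma_{\rm en}}\equiv 0$. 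Young's inequality absorbs the remaining terms $(\tilde a_i-\alpha_i)\partial_i\chi_h$, $b_i\partial_i\Psi_h$, $c\Psi_h$, and $\mathcal{F}_h$ into $\int_{r_{\rm en}}^R E(s)\,ds$, $\|\Psi_h\|_{H^1(\Omega)}^2$, and $\|\mathcal{F}_h\|_{L^2(\Omega)}^2$, yielding a Gronwall-ready differential inequality in $R$.

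For the elliptic estimate I would test $\mathscr{L}_2(\chi_h,\Psi_h)=\mathfrak{f}_h$ against $\Psi_h$ itself: the Neumann data on $\Psi_h$ kill the boundary integrals produced by $\sum g_{ij}\partial_{ij}\Psi_h$, while the strictly positive zeroth-order coefficient $g_0$ identified in Lemma \ref{Lem51}(a) supplies full $H^1$-coercivity and gives $\|\Psi_h\|_{H^1(\Omega)}^2\le C\|\mathfrak{f}_h\|_{L^2(\Omega)}^2+C\|\partial_r\chi_h\|_{L^2(\Omega)}^2$. Applying Gronwall to the hyperbolic inequality and then integrating in $R$ across $[r_{\rm en},r_{\rm ex}]$ leads to $\|\nabla\chi_h\|_{L^2(\Omega)}^2\le L\,e^{CL}\bigl(\|\mathfrak{g}_h\|_{L^2(\Gamma_{\rm en})}^2+\|\Psi_h\|_{H^1(\Omega)}^2+\|\mathcal{F}_h\|_{L^2(\Omega)}^2\bigr)$. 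Inserting the elliptic bound and choosing $L$ small enough (depending only on the data) so that $CL\,e^{CL}<\tfrac12$ lets me reabsorb $\|\partial_r\chi_h\|_{L^2}^2$ into the left-hand side; the trace $\chi_h|_{\Gamma_{\rm en}}=0$ then upgrades $\|\nabla\chi_h\|_{L^2}$ to the full $H^1$ norm via a Poincaré-type inequality in $r$, and back-substitution delivers the stated $H^1$ bound for both unknowns.

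The main obstacles are threefold. First, the commutators generated when $\partial_r$ acts on $\tilde a_{ij}$ must be controlled in $L^\infty$, which requires combining the background $C^l$-bounds of Lemma \ref{Lem51}(a) with the perturbative smallness afforded by Lemma \ref{Lem51}(c). Second, the coefficients $a_2$ and $g_2$ carry $\cot\phi$-type singularities on the symmetry axis $\{\phi=0\}$, so every $\phi$-integration by parts must be executed with the axisymmetric weight $\sin\phi$ together with the axisymmetry-forced vanishing of $\partial_\phi\chi_h$ and $\partial_\phi\Psi_h$ at the axis. Third, and most delicate, the smallness of $L$ is the quantitative linchpin of the closure: without it the Gronwall exponential overwhelms the coupling, preventing reabsorption of $\|\partial_r\chi_h\|_{L^2}^2$, and keeping $L$ dependent only on the data (and not on $(\delta_1,\delta_2,\delta_3)$) forces one to track at every step which constants are background-driven and which are perturbative.
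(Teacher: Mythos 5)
Your proposal is correct in spirit, and it does reach the stated $H^1$ bound, but it takes a genuinely different route from the paper's proof. The paper does not decouple the two equations or run a slice-by-slice Gronwall argument. Instead it forms a \emph{single} combined identity
\begin{equation*}
\mathcal{I}_L(\chi_h,\Psi_h,\mathcal{W}):=\int_{\mathcal{N}_L}\Bigl[\mathcal{W}(r)(\partial_r\chi_h)\,\mathscr{L}_1^{(\chi_{\ast},\Psi_{\ast})}(\chi_h,\Psi_h)-\Psi_h\,\mathscr{L}_2(\chi_h,\Psi_h)\Bigr]r^2\sin\phi\,d{\bf y},
\end{equation*}
integrates by parts once over the \emph{whole} domain, groups the result into a definite piece $J_1$, a cross-term piece $J_2$ bounded by Cauchy--Schwarz, and boundary pieces $J_3$, and then chooses the scalar weight $\mathcal{W}(r)$ together with $L$ so that $\mathfrak{a}_1-\mathfrak{a}_4\ge\lambda_0$ and $-(\mathfrak{a}_2-2M_0)/\tilde a_{22}\ge\lambda_0$ hold on $\overline{\mathcal{N}_L}$; this pins $\mathcal{W}$ down as the solution of a Riccati equation $-\mathcal{W}'-\mathfrak{b}_2\mathcal{W}^2-\mathfrak{b}_1\mathcal{W}-\mathfrak{b}_0=\lambda_0$, and $L$-smallness enters as the requirement that the Riccati blow-up time exceeds $L$. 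In other words, the paper runs a Friedrichs-type positive-energy-integral method for the coupled hyperbolic--elliptic pair, while you run a classical energy/Gronwall argument for the hyperbolic part and a separate coercivity estimate for the elliptic part and then close by absorbing $\|\partial_r\chi_h\|_{L^2}^2$ via the $L\,e^{CL}$ factor. Both schemes use the same structural inputs --- the time-like role of $r$ (Lemma 5.1(b)), $g_0>0$ for elliptic coercivity, and the $\sin\phi$-weighted measure plus axisymmetric vanishing at $\phi=0$ to tame the $\cot\phi$ factors in $a_2,\,g_2$ --- and both ultimately rely on $L$-smallness, only through different mechanisms. What the paper's approach buys is a single quadratic-form positivity computation with no back-substitution between two estimates, matching the template of \cite{bae2021structural}; what your approach buys is a more elementary and modular presentation (Gronwall plus elliptic coercivity plus a one-step absorption), at the cost of having to track carefully that the elliptic-to-hyperbolic feedback constant is data-driven so the absorption can in fact be closed with $L$ independent of the $\delta_i$.

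One caution worth flagging on your side: your sketch bounds the commutators by the $L^\infty$ coefficient norms of Lemma 5.1(a)--(c) and only gestures at the axial issue, but the cross term coming from $\tilde a_{12}\partial_{r\phi}\chi_h\,\partial_r\chi_h$ after $\phi$-integration by parts produces $\tilde a_{12}\cos\phi\,(\partial_r\chi_h)^2$ integrated against $d\phi$ (not $\sin\phi\,d\phi$), so one genuinely needs $|\tilde a_{12}/\sin\phi|\le C$ near the axis, i.e., one must use that $\tilde q_2={\bf q}\cdot{\bf e}_\phi$ vanishes like $\sin\phi$ on the axis by axisymmetry of $\chi_{\ast}+\bar\varphi$ and of $\psi_{\ast}{\bf e}_\theta$. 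The paper invokes exactly this bound; in your Gronwall computation this is where the axis singularity is actually tamed, and it deserves to be stated explicitly rather than subsumed under ``the $\sin\phi$ weight''.
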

\begin{proof} 

For a smooth positive function $\mathcal{W}=\mathcal{W}(r)$ to be determined later, set 
\begin{equation*}
\mathcal{I}_L(\chi_h,\Psi_h,\mathcal{W}):=\int_{\mathcal{N}_L}\left[\mathcal{W}(\partial_r\chi_h)\mathscr{L}_1^{(\chi_{\ast},\Psi_{\ast})}(\chi_h,\Psi_h)-\Psi_h\mathscr{L}_2(\chi_h,\Psi_h)\right]r^2\sin\phi d{\bf y},
\end{equation*}
where $\mathcal{N}_L$ is a three-dimensional rectangular domain defined by 
\begin{equation*}
\begin{split}
\mathcal{N}_L:=\{{\bf y}=(r,\phi,\theta)\in\mathbb{R}^3:\, r_{\rm en}<r<r_{\rm ex},\,0<\phi <\phi_0,\, 0<\theta<2\pi\}.
\end{split}
\end{equation*}
Integrating by parts gives 
\begin{equation*}
\mathcal{I}_L(\chi_h,\Psi_h,\mathcal{W})=J_1+J_2+J_3
\end{equation*}
with
\begin{equation*}
\begin{split}
J_1:=&\int_{\mathcal{N}_L}\left[\mathfrak{a}_1\frac{(\partial_r\chi_h)^2}{2}+\mathfrak{a}_2\frac{(\partial_{\phi}\chi_h)^2}{2}+(\partial_r\Psi_h)^2+\frac{(\partial_\phi\Psi_h)^2}{r^2}+g_0\Psi_h^2\right](r^2\sin\phi)d{\bf y},\\
J_2:=&\int_{\mathcal{N}_L}\mathcal{W}\mathfrak{a}_3(\partial_r\chi_h)(\partial_\phi\chi_h)(r^2\sin\phi)d{\bf y}
-\int_{\mathcal{N}_L}\mathcal{W}b_1(\partial_r\chi_h)(\partial_r\Psi_h)(r^2\sin\phi)d{\bf y}\\
&+\int_{\mathcal{N}_L}\left(-c\mathcal{W}+h_1\right)(\partial_r\chi_h)\Psi_h(r^2\sin\phi)d{\bf y},\\
J_3:=&\int_{\partial\mathcal{N}_L\cap\{r=r_{\rm ex}\}}\mathcal{W}\left[\frac{(\partial_r\chi_h)^2}{2}-\tilde{a}_{22}\frac{(\partial_{\phi}\chi_h)^2}{2}\right](r^2\sin\phi )d{\bf y}'\\
&-\int_{\partial\mathcal{N}_L\cap\{r=r_{\rm en}\}}\mathcal{W}\frac{\mathfrak{g}_h^2}{2}(r^2\sin\phi )d{\bf y}',
\end{split}
\end{equation*}
where $\mathfrak{a}_1$, $\mathfrak{a}_2$, and $\mathfrak{a}_3$ are defined by 
\begin{equation}\label{a123}
\begin{split}
\mathfrak{a}_1:=&\,2\mathcal{W}(\tilde{a}_1-\alpha_1)-\left[\partial_r\mathcal{W}+\frac{2\mathcal{W}}{r}+\frac{2\mathcal{W}\tilde{a}_{12}\cos\phi}{\sin\phi}+2\mathcal{W}(\partial_{\phi}\tilde{a}_{12})\right],\\
\mathfrak{a}_2:=&\,\partial_r(\mathcal{W}\tilde{a}_{22})+\frac{2\mathcal{W}\tilde{a}_{22}}{r}=-\tilde{a}_{22}\left(-\partial_r\mathcal{W}-\frac{\partial_r\tilde{a}_{22}}{\tilde{a}_{22}}\mathcal{W}-\frac{2\mathcal{W}}{r}\right),\\
\mathfrak{a}_3:=&\,-\left(\partial_{\phi}\tilde{a}_{22}+\frac{\tilde{a}_{22}\cos\phi}{\sin\phi}\right)+\tilde{a}_2\\
	=&\,-\partial_{\phi}\tilde{a}_{22}-\left(\frac{\tilde{\bf q}\cdot{\bf e}_{\phi}}{r^2}\right)\frac{\cot\phi+(\tilde{\bf q}\cdot{\bf e}_r)(\tilde{\bf q}\cdot{\bf e}_\phi)}{\tilde{\bf q}\cdot{\bf e}_r-c^2(\Psi_{\ast}+\bar{\Phi},\tilde{\bf q})}.
\end{split}
\end{equation} 
By the Cauchy-Schwartz inequality, we get 
\begin{equation*}
\begin{split}
|J_2|
\le &\int_{\mathcal{N}_L} \left[\frac{1}{2}\mathcal{W}^2(\partial_r\chi_h)^2+\frac{1}{2}\mathfrak{a}_3^2(\partial_{\phi}\chi_h)^2\right](r^2\sin\phi)d{\bf y}\\
&+\int_{\mathcal{N}_L}\left[2\mathcal{W}^2b_1^2(\partial_r\chi_h)^2+\frac{1}{8}(\partial_r\Psi_h)^2\right](r^2\sin\phi)d{\bf y}\\
&+\int_{\mathcal{N}_L}\left[\frac{1}{\mu}(-c\mathcal{W}+h_1)^2(\partial_r\chi_h)^2+\frac{\mu}{4}\Psi_h^2\right](r^2\sin\phi)d{\bf y}
\end{split}
\end{equation*}
for $\mu:=\inf_{r\in[r_{\rm en},r_{\rm en}+L]} g_0(r)$.
Note that 
\begin{equation*}
\begin{split}
0<\mathfrak{a}_3^2=&\,\left[-\partial_{\phi}\tilde{a}_{22}-\left(\frac{\tilde{\bf q}\cdot{\bf e}_{\phi}}{r^2}\right)\frac{\cot\phi+(\tilde{\bf q}\cdot{\bf e}_r)(\tilde{\bf q}\cdot{\bf e}_\phi)}{\tilde{\bf q}\cdot{\bf e}_r-c^2(\Psi_{\ast}+\bar{\Phi},\tilde{\bf q})}\right]^2
\le 2M_0
\end{split}
\end{equation*}
for a constant $M_0>0$ depending only on the data.
Then
\begin{equation}\label{J2-ab}
\begin{split}
|J_2|\le 
&\int_{\mathcal{N}_L}\left[\frac{1}{8}(\partial_r\Psi_h)^2+\frac{\mu}{4}\Psi_h^2+\mathfrak{a}_4\frac{(\partial_r\chi_h)^2}{2}+
M_0(\partial_{\phi}\chi_h)^2\right](r^2\sin\phi)d{\bf y}\\
\end{split}
\end{equation}
for $\mathfrak{a}_4$  defined by 
\begin{equation}\label{a4}
\begin{split}
&\mathfrak{a}_4:=\frac{1}{2}\mathcal{W}^2+2\mathcal{W}^2b_1^2+\frac{1}{\mu}(-c\mathcal{W}+h_1)^2.
\end{split}
\end{equation}
By using \eqref{J2-ab}, we get 
\begin{equation*}
\begin{split}
\mathcal{I}_L(\chi_h,\Psi_h,\mathcal{W})
\ge& \int_{\mathcal{N}_L} \left(\mathfrak{a}_1-\mathfrak{a}_4\right)\frac{(\partial_r\chi_h)^2}{2}(r^2\sin\phi)d{\bf y}\\
&+\int_{\mathcal{N}_L}\left(\mathfrak{a}_2-{2M_0}\right)\frac{(\partial_{\phi}\chi_h)^2}{2}(r^2\sin\phi)d{\bf y}\\
&+\int_{\mathcal{N}_L}\left[\frac{7}{8}(\partial_r\Psi_h)^2+\frac{(\partial_{\phi}\Psi_h)^2}{r^2}+\frac{3\mu}{4}\Psi_h^2\right](r^2\sin\phi)d{\bf y}\\
&+\inf_{{\bf y}\in\overline{\mathcal{N}_L}}\{1,-\tilde{a}_{22}({\bf y})\}\int_{\partial\mathcal{N}_L\cap\{r=r_{\rm ex}\}}\frac{\mathcal{W}}{2}\left[(\partial_r\chi_h)^2+(\partial_{\phi}\chi_h)^2\right](r^2\sin\phi)d{\bf y}'\\
&-\int_{\partial\mathcal{N}_L\cap\{r=r_{\rm en}\}}\mathcal{W}\frac{\mathfrak{g}_h^2}{2}(r^2\sin\phi )d{\bf y}'.
\end{split}
\end{equation*}
If we choose $\mathcal{W}$  and $L$ satisfying 
\begin{equation}\label{W-sa}
\left\{\begin{split}
&\mathcal{W}(r)>0\mbox{ for } r\in[r_{\rm en},  r_{\rm en}+ L],\\
&\mathfrak{a}_1-\mathfrak{a}_4\ge \lambda_0\mbox{ in }\overline{\mathcal{N}_L},\\
&-\frac{1}{\tilde{a}_{22}}(\mathfrak{a}_2-{2M_0})\ge {\lambda_0}\mbox{ in }\overline{\mathcal{N}_L}
\end{split}\right.
\end{equation}
for some constant $\lambda_0>0$, then we get 
\begin{equation}\label{IL-left}
\begin{split}
\mathcal{I}_L(\chi_h,\Psi_h,\mathcal{W})
\ge&\frac{\lambda_0}{2}\int_{\mathcal{N}_L}\left[(\partial_r\chi_h)^2+\inf_{{\bf y}\in\overline{\mathcal{N}_L}}(-\tilde{a}_{22}({\bf y}))(\partial_{\phi}\chi_h)^2\right](r^2\sin\phi)d{\bf y}\\
&+\frac{7}{8}\int_{\mathcal{N}_L}\left[(\partial_r\Psi_h)^2+\frac{1}{r^2}(\partial_\phi\Psi_h)^2\right](r^2\sin\phi)d{\bf y}\\
&+\frac{3\mu}{4}\int_{\mathcal{N}_L}\Psi_h^2 (r^2\sin\phi)d{\bf y}\\
&-\int_{\partial\mathcal{N}_L\cap\{r=r_{\rm en}\}}\mathcal{W}\frac{\mathfrak{g}_h^2}{2}(r^2\sin\phi )d{\bf y}'.
\end{split}
\end{equation}
Since 
\begin{equation*}
\mathcal{I}_L(\chi_h,\Psi_h,\mathcal{W})=\int_{\mathcal{N}_L}\left[\mathcal{W}(\partial_r\chi_h)\mathcal{F}_h-\Psi_h\mathfrak{f}_h\right](r^2\sin\phi )d{\bf y},
\end{equation*}
the Cauchy-Schwarz inequality implies that 
\begin{equation}\label{IL-right}
\mathcal{I}_L(\chi_h,\Psi_h,\mathcal{W})\le \int_{\mathcal{N}_L}\left[\frac{\lambda_0}{4}(\partial_r\chi_h)^2+\frac{\mu}{4}\Psi_h^2+\frac{1}{\lambda_0}|\mathcal{W}\mathcal{F}_h|^2+\frac{1}{\mu}|\mathfrak{f}_h|^2 \right](r^2\sin\phi) d{\bf y}.
\end{equation}
It follows from \eqref{IL-left}-\eqref{IL-right} that 
\begin{equation*}
\begin{split}
&\frac{\lambda_0}{4}\inf_{{\bf y}\in\overline{\mathcal{N}_L}}\{1,-r^2\tilde{a}_{22}({\bf y})\}\int_{\mathcal{N}_L}\left[(\partial_r\chi_h)^2+\frac{(\partial_{\phi}\chi_h)^2}{r^2}\right](r^2\sin\phi)d{\bf y}\\
&\quad+\frac{7}{8}\int_{\mathcal{N}_L}\left[(\partial_r\Psi_h)^2+\frac{(\partial_{\phi}\Psi_h)^2}{r^2}\right](r^2\sin\phi)d{\bf y}+\frac{\mu}{2}\int_{\mathcal{N}_L}\Psi_h^2 (r^2\sin\phi)d{\bf y}\\
&\le \frac{1}{\lambda_0}\int_{\mathcal{N}_L}|\mathcal{W}\mathcal{F}_h|^2(r^2\sin\phi)d{\bf y}+\frac{1}{\mu}\int_{\mathcal{N}_L}|\mathfrak{f}_h|^2 (r^2\sin\phi)d{\bf y}\\
&\quad+\int_{\partial\mathcal{N}_L\cap\{r=r_{\rm en}\}}\mathcal{W}\frac{\mathfrak{g}_h^2}{2}(r^2\sin\phi )d{\bf y}'.
\end{split}
\end{equation*}
Since $\chi_{h}=0$ on $\partial\mathcal{N}_L\cap\{r=r_{\rm en}\}$, we apply the Poincar\'e inequality to get 
\begin{equation*}
\begin{split}
&\|\chi_h\|_{H^1(\Omega)}+\|\Psi_h\|_{H^1(\Omega)}\\
&\le C\left(\|\mathcal{F}_h\|_{L^2(\Omega)}+\|\mathfrak{f}_h\|_{L^2(\Omega)}+\|\mathfrak{g}_h\|_{L^2({\Gamma_{\rm en}})}+\|\varphi_{\rm en}-\bar{\varphi}\|_{C^0(\overline{\Gamma_{\rm en}})}\right)
\end{split}
\end{equation*}
for a constant $C>0$ depending only on the data, $L$, $\lambda_0$, and $\max_{r\in[r_{\rm en},r_{\rm ex}]}\mathcal{W}(r)$.

To complete the proof, we find $\mathcal{W}$ and $L$ satisfying \eqref{W-sa}.
The definitions of $\mathfrak{a}_i$ $(i=1,2,3,4)$ in \eqref{a123} and \eqref{a4} directly imply
\begin{equation*}
\begin{split}
\mathfrak{a}_1-\mathfrak{a}_4
=&\,2\mathcal{W}(\tilde{a}_1-\alpha_1)-\left[\partial_r\mathcal{W}+\frac{2\mathcal{W}}{r}+\frac{2\mathcal{W}\tilde{a}_{12}\cos\phi}{\sin\phi}+2\mathcal{W}(\partial_{\phi}\tilde{a}_{12})\right]\\
&-\frac{1}{2}\mathcal{W}^2-2\mathcal{W}^2b_1^2-\frac{1}{\mu}(-c\mathcal{W}+h_1)^2\\
=&-\partial_r\mathcal{W}+\left[2(\tilde{a}_1-\alpha_1)-\frac{2}{r}-\frac{2\tilde{a}_{12}\cos\phi}{\sin\phi}-2(\partial_{\phi}\tilde{a}_{12})+\frac{2ch_1}{\mu}\right]\mathcal{W}\\
&-\left[\frac{1}{2}+2b_1^2+\frac{c^2}{\mu}\right]\mathcal{W}^2-\frac{h_1^2}{\mu}
\end{split}
\end{equation*}
and
\begin{equation*}
\begin{split}
-\frac{1}{\tilde{a}_{22}}(\mathfrak{a}_2-{2M_0})
=&-\partial_r\mathcal{W}+\left(-\frac{\partial_r\tilde{a}_{22}}{\tilde{a}_{22}}-\frac{2}{r}\right)\mathcal{W}+\frac{2M_0}{\tilde{a}_{22}}.
\end{split}
\end{equation*}
By applying the method of \cite[Step 3 in the proof of Proposition 2.4]{bae2021structural}, we can find $\mathcal{W}$ and $L$ satisfying 
\begin{equation*}
-\partial_r\mathcal{W}-\mathfrak{b}_2\mathcal{W}^2-\mathfrak{b}_1\mathcal{W}-\mathfrak{b}_0=\lambda_0\mbox{ for }r\in[r_{\rm en},r_{\rm ex}(=r_{\rm en}+L)],
\end{equation*}
where $\mathfrak{b}_2$, $\mathfrak{b}_1$, and $\mathfrak{b}_0$ are defined by  
\begin{equation*}
\begin{split}
&\mathfrak{b}_2:=\max_{r\in[r_{\rm en},r_{\rm ex}]}\left(\frac{1}{2}+2b_1^2+\frac{c^2}{\mu}\right),\\
&\mathfrak{b}_1:=\max_{{\bf y}\in\overline{\mathcal{N}_L}}\left\{-\left[2(\tilde{a}_1-\alpha_1)-\frac{2}{r}-\frac{2\tilde{a}_{12}\cos\phi}{\sin\phi}-2(\partial_{\phi}\tilde{a}_{12})+\frac{2ch_1}{\mu}\right],\frac{\partial_r\tilde{a}_{22}}{\tilde{a}_{22}}+\frac{2}{r}\right\},\\
&\mathfrak{b}_0:=\max_{{\bf y}\in\overline{\mathcal{N}_L}}\left\{\frac{h_1^2}{\mu}, \frac{2M_0}{-\tilde{a}_{22}}\right\}.
\end{split}
\end{equation*}
Furthermore, $\lambda_0$ and $\max_{r\in[r_{\rm en},r_{\rm ex}]}\mathcal{W}(r)$ depend on the data and $L$.
The proof of Lemma \ref{lem-H1} is completed.
\end{proof}

\begin{lemma}\label{Lem-po}
Let ${\bm\tau}_{\flat}$ be from Lemma \ref{Lem51}.
There exists a small constant $\tilde{\bm\tau}_{\flat}\in(0,{\bm\tau_{\flat}}]$ depending only on the data and $(L,\delta_1,\delta_2,\delta_3)$ so that if ${\bm\tau}_p\in(0,\tilde{\bm\tau}_{\flat}]$, then, 
for each $(\chi_{\ast},\Psi_{\ast})\in\mathcal{J}(\delta_2)$,
the linear boundary value problem \eqref{fix-lin-ho} has a unique axisymmetric solution $(\chi_h,\Psi_h)\in[H_{\ast}^4(\Omega)]^2$ that satisfies 
\begin{equation}\label{lem-vw-est}
\begin{split}
&\|\chi_h\|_{H_{\ast}^4(\Omega)}\le C\left(\|\mathcal{F}_h\|_{H_{\ast}^3(\Omega)}+\|\mathfrak{f}_h\|_{H^2(\Omega)}+\|\mathfrak{g}_h\|_{H^3({\Gamma_{\rm en}})}\right),\\
&\|\Psi_h\|_{H_{\ast}^4(\Omega)}\le C\left(\|\mathcal{F}_h\|_{H^2(\Omega)}+\|\mathfrak{f}_h\|_{H^2(\Omega)}+\|\mathfrak{g}_h\|_{H^2({\Gamma_{\rm en}})}\right)
\end{split}
\end{equation}
for a constant $C>0$ depending only on the data and $L$.
\end{lemma}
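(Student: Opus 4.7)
My plan is to treat \eqref{fix-lin-ho} as a linear hyperbolic--elliptic coupled system for $(\chi_h,\Psi_h)$ and combine a Galerkin/iteration existence argument with a higher-order energy and elliptic bootstrap. The fact that $a_{22}(\Psi_*+\bar\Phi,\tilde{\bf q})$ is strictly negative by Lemma \ref{Lem51}(b) shows that $\mathscr{L}_1^{(\chi_*,\Psi_*)}$ is a second-order operator of hyperbolic type in which $r$ plays the role of a time-like variable, while $\mathscr{L}_2$ is uniformly elliptic with a zeroth order coefficient $c$ bounded below. The boundary data for $\chi_h$ are exactly the Cauchy data $\chi_h=0$ and $\partial_r\chi_h=\mathfrak g_h$ on $\Gamma_{\rm en}$ together with a reflecting condition on $\Gamma_{\rm w}$, which is the well-posed setup for a forward hyperbolic problem; the Neumann data for $\Psi_h$ on $\partial\Omega$ give a standard Neumann problem that is well-posed thanks to the lower-order term $-g_0\Psi_h$.

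For existence I would iterate: given $\Psi_h^{(n)}\in H^4_*(\Omega)$ with $\Psi_h^{(0)}=0$, first solve the linear hyperbolic Cauchy--Neumann problem
\begin{equation*}
\mathscr{L}_1^{(\chi_*,\Psi_*)}(\chi_h^{(n+1)},\Psi_h^{(n)})=\mathcal F_h\ \ \text{in }\Omega,
\quad \chi_h^{(n+1)}=0,\ \partial_r\chi_h^{(n+1)}=\mathfrak g_h\ \text{on }\Gamma_{\rm en},\ \partial_\phi\chi_h^{(n+1)}=0\ \text{on }\Gamma_{\rm w},
\end{equation*}
via the method of characteristics/Galerkin along the $r$-direction; then, given $\chi_h^{(n+1)}$, solve the linear Neumann problem $\mathscr{L}_2(\chi_h^{(n+1)},\Psi_h^{(n+1)})=\mathfrak f_h$ in $\Omega$ with the Neumann data of \eqref{fix-lin-ho} on $\Gamma_{\rm en}\cup\Gamma_{\rm w}\cup\Gamma_{\rm ex}$ by the standard Lax--Milgram argument. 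For the differences $(\chi_h^{(n+1)}-\chi_h^{(n)},\Psi_h^{(n+1)}-\Psi_h^{(n)})$ I would apply the $H^1$ a priori estimate of Lemma \ref{lem-H1}, whose constants depend on $L$ in a controlled way. Choosing $L$ small enough to absorb the coupling coefficients $b_1$ and $h_1$ into a contraction factor will produce a Cauchy sequence in $H^1(\Omega)^2$, whose limit solves \eqref{fix-lin-ho}. Uniqueness is immediate from Lemma \ref{lem-H1} applied to the difference of two solutions (for which $\mathcal F_h,\mathfrak f_h,\mathfrak g_h$ all vanish).

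For the regularity upgrade I would bootstrap in two directions. Tangentially (in $\phi$ and $\theta$), I differentiate both equations using that $\partial_\phi$ preserves the hyperbolic structure and commutes with $\mathscr{L}_2$ up to lower-order terms; axisymmetry handles $\partial_\theta$ trivially and the compatibility conditions in $\mathcal J(\delta_2)$ and \eqref{super-comp} ensure that tangentially differentiated data satisfy the right boundary conditions. Repeated application of Lemma \ref{lem-H1} to the differentiated system then produces up to three tangential derivatives for $\chi_h$ and $\Psi_h$ in $L^2$. The radial regularity in the $H^k_*$ norm is recovered by algebra: from $\mathscr{L}_1^{(\chi_*,\Psi_*)}(\chi_h,\Psi_h)=\mathcal F_h$ we solve algebraically for $\partial_{rr}\chi_h$ in terms of $\partial_{r\phi}\chi_h,\partial_{\phi\phi}\chi_h,\partial_r\chi_h,\partial_\phi\chi_h,\Psi_h,\partial_r\Psi_h,\partial_\phi\Psi_h$ and $\mathcal F_h$, whose right-hand side is already controlled; similarly $\mathscr{L}_2$ gives $\partial_{rr}\Psi_h$ in terms of already controlled quantities. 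Iterating this procedure yields $\partial_r^k$ of $\chi_h$ and $\Psi_h$ in $H^{4-k}$-type norms, which is precisely what the $H^4_*$ norm asks for, and produces the estimates \eqref{lem-vw-est}. On the axis $\{\phi=0\}$ I would treat the apparent $\cot\phi$ singularities in \eqref{a123} and in $g_2$ via the method of reflection across $\Gamma_{\rm w}$ announced in the introduction, extending $\chi_h,\Psi_h$ to an auxiliary larger axisymmetric domain so that the standard interior Sobolev theory for the elliptic part $\mathscr{L}_2$ can be applied away from any geometric singularity.

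The main obstacle I expect is the coupling through the $-b_1\partial_i\Psi-c\Psi$ and $-h_1\partial_r\chi$ terms combined with the smallness constraint on $L$. The $H^1$ estimate of Lemma \ref{lem-H1} was already tuned by an explicit choice of weight $\mathcal W$ and length $L$ to kill such cross terms, but at higher order the commutators $[\partial_\phi,\mathscr L_1^{(\chi_*,\Psi_*)}]$ and $[\partial_\phi,\mathscr L_2]$ produce new coupling lower-order terms that must again be absorbed into the left-hand side by refining the weight or by making $L$ and ${\bm\tau}_p$ still smaller. Closing this bootstrap requires verifying that the constants $M_l$ of Lemma \ref{Lem51}(a) and the estimates \eqref{est-coeff} suffice so that the remainders are linear in $(\|\chi_h\|_{H^k_*},\|\Psi_h\|_{H^k_*})$ with a coefficient absorbed by ${\bm\tau}_p\le\tilde{\bm\tau}_\flat$; choosing $\tilde{\bm\tau}_\flat$ at the end accordingly yields the final estimates \eqref{lem-vw-est}.
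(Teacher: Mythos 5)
Your overall architecture (decoupled hyperbolic--elliptic iteration, $H^1$ base estimate from Lemma~\ref{lem-H1}, bootstrap to higher regularity, algebraic recovery of $\partial_{rr}$) is reasonable at a coarse level, but it differs from the paper's route in an essential place, and it is exactly there that your proposal has a gap.

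The paper does not bootstrap by applying $\partial_\phi$ to the equations. Instead, inside Lemma~\ref{Lemma27} it performs a Galerkin discretization in $\phi$ only, expanding $V_m(r,\phi)=\sum_j v_j(r)\xi_j(\phi)$ and $W_m(r,\phi)=\sum_j w_j(r)\xi_j(\phi)$ in the eigenfunctions of the operator $-\tfrac{1}{\sin\phi}(\sin\phi\,\xi')'$ on $(0,\phi_0)$ with Neumann conditions at \emph{both} $\phi=0$ and $\phi=\phi_0$. The reason this is done is precisely the one you try to wave away: the coefficients $a_2$, $g_2$, and $\mathfrak a_3$ contain $\cos\phi/\sin\phi$, and $\partial_\phi$ of such a coefficient produces $\csc^2\phi$, which is not bounded near $\phi=0$. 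Applying Lemma~\ref{lem-H1} to the $\partial_\phi$-differentiated system therefore does not close, since the hypotheses of that lemma (bounded $\tilde a_{ij}$-type coefficients) are no longer met. The eigenfunction expansion sidesteps this entirely: the $\xi_j$ are smooth on $[0,\phi_0]$, orthonormal in $L^2(\sin\phi\,d\phi)$, and satisfy $-\tfrac{1}{\sin\phi}(\sin\phi\,\xi_j')'=\omega_j\xi_j$, so the paper can trade $\partial_\phi$-regularity of $V_m$ for regularity of the data by repeatedly integrating by parts against $\xi_j'$ and $\sin\phi\,\xi_j'$ (this is what \eqref{664}, \eqref{ggnn}, \eqref{omega-ab}, \eqref{xi-eq1}--\eqref{xi-eq2} accomplish). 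In particular the axis estimates work because $\xi_j'(0)=0$ forces $\partial_\phi V_m$, $\partial_\phi W_m$ to vanish at $\phi=0$ at a rate that compensates the $\cot\phi$ factors. Your proposal has no mechanism to produce this vanishing.

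Your proposed remedy is also aimed at the wrong boundary. Reflection across $\Gamma_{\rm w}$ extends the solution into $\phi>\phi_0$ and helps with the wall corner, which is indeed what the paper does for the \emph{elliptic} equation for $W_m$ in \eqref{Wm-eq}. It does nothing for the apparent singularity at $\phi=0$: that is not a boundary of the three-dimensional domain but the symmetry axis, and in $(r,\phi)$ coordinates the hyperbolic operator $\mathscr L_1^{(\chi_*,\Psi_*)}$ has a genuine coordinate degeneracy there. Saying that ``interior Sobolev theory'' in Cartesian coordinates applies is true for $\mathscr L_2$ (it is just a perturbed Laplacian), but the energy estimate for $\mathscr L_1^{(\chi_*,\Psi_*)}$ is not an elliptic interior estimate and must be carried out in the $(r,\phi)$ variables where $r$ is time-like; that is precisely where the $\cot\phi$ terms live. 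Finally, the paper also inserts a mollification step (the sequence $(S_n,\Lambda_n,\chi_n,\Psi_n,{\bf W}_n)$) before the Galerkin argument, since the coefficients $\tilde a_{ij}$, $\tilde a_i$ are only $H^3_*$-regular; you would need a comparable approximation before any of your iterations produce classical solutions on which to do energy identities.

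In summary: your existence-by-iteration idea is plausible and your uniqueness via Lemma~\ref{lem-H1} is fine, but the tangential bootstrap by direct $\partial_\phi$-differentiation fails at the axis, and the reflection you invoke does not repair it. The missing idea is the eigenfunction (spectral-in-$\phi$) Galerkin decomposition, which encodes the axis regularity in the basis and reduces the higher-order estimates to manipulations with $\xi_j$, $\xi_j'$, and the eigenvalue relation.
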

\begin{proof}
Overall, the outline is same as the proof of \cite[Proposition 2.6]{bae2021structural}. 
In this case, however, we need to compute estimates very carefully due to the singularity issues related with the polar angle $\phi$ on the axis of the divergent nozzle.

{\bf Step 1.} Approximation of \eqref{fix-lin-ho}: 
By using standard mollifier and the extension method (cf. \cite[$\S$6.9]{gilbarg2015elliptic}), one can show that,  
for each $(S_{\ast},\frac{\Lambda_{\ast}}{r\sin\phi}{\bf e}_{\theta},\chi_{\ast},\Psi_{\ast},{\bf W}_{\ast})\in\mathcal{J}(\delta_1)\times\mathcal{J}(\delta_2)\times\mathcal{J}(\delta_3)$, there exists a sequence $\{(S_n,\frac{\Lambda_n}{r\sin\phi}{\bf e}_{\theta},\chi_n,\Psi_n,{\bf W}_n)\}_{n=1}^{\infty}\subset C^{\infty}(\overline{\Omega})$ such that 
\begin{equation*}
\begin{split}
&\partial_\phi S_n=\partial_{\phi}\left(\frac{\Lambda_n}{r\sin\phi}\right)=\partial_\phi \chi_n=\partial_\phi\Psi_n={\bf W}_n=0\mbox{ on }\Gamma_{\rm w},\\
&\|S_n-S_{\ast}\|_{H_{\ast}^4(\Omega)}+\left\|\left(\frac{\Lambda_n}{r\sin\phi}-\frac{\Lambda_{\ast}}{r\sin\phi}\right){\bf e}_{\theta}\right\|_{H_{\ast}^3(\Omega)}\le \frac{\delta_1{\bm\tau}_p}{2n},\\
&\|\chi_n-\chi_{\ast}\|_{H_{\ast}^4(\Omega)}+\|\Psi_n-\Psi_{\ast}\|_{H_{\ast}^4(\Omega)}\le \frac{\delta_2{\bm\tau}_p}{2n},\\
&\|{\bf W}_n-{\bf W}_{\ast}\|_{H^5_{\ast}(\Omega)}\le \frac{\delta_3{\bm\tau}_p}{2n}.
\end{split}
\end{equation*}
To simplify notations, for each $n\in\mathbb{N}$, we set
\begin{equation*}
\begin{split}
&{\bf q}_n:={\bf q}\left(\nabla(\chi_n+\bar{\varphi}),\nabla\times {\bf W}_n,\frac{\Lambda_n}{r\sin\phi}\right),\quad{\bf t}_n:={\bf q}_n-\nabla(\chi_n+\bar{\varphi}),\\
&a_{ij}^{(n)}:=a_{ij}(\Psi_n+\bar{\Phi},{\bf q}_n),\quad
a_i^{(n)}:=a_i(\Psi_n+\bar{\Phi},{\bf q}_n).
\end{split}
\end{equation*}
Also, we set 
\begin{equation*}
\begin{split}
&\mathcal{L}_1^{(n)}(\chi,\Psi):=\sum_{i,j=1}^2 a_{ij}^{(n)}\partial_{ij}\chi+\sum_{i=1}^2((a_i^{(n)}-\alpha_i)\partial_i\chi-b_i\partial_i\Psi)-c\Psi,\\
&\mathcal{F}^{(n)}:=\mathcal{F}(S_n,DS_n, \Psi_n,D\Psi_n,D\chi_n,{\bf t}_n)-\mathcal{L}_1(\chi_{bd},\Psi_{bd}),\\
&\mathfrak{f}^{(n)}:=\mathfrak{f}(S_n,\Psi_n,D\chi_n,{\bf t}_n)-\mathscr{L}_2(\chi_{bd},\Psi_{bd}),\\
&\mathfrak{g}^{(n)}:=-u_0+u_{\rm en}-(\nabla\times{\bf W}_n)\cdot{\bf e}_r-\nabla\chi_{bd}\cdot{\bf e}_r.
\end{split}
\end{equation*}

\begin{lemma}\label{Lemma27}
There exists a small constant ${\bm\tau}^{\flat}\in(0,{\bm\tau_{\flat}}]$ depending only on the data and $(L,\delta_1,\delta_2,\delta_3)$ so that if ${\bm\tau}_p\in(0,{\bm\tau}^{\flat}]$, then, 
for each $n\in\mathbb{N}$, the linear boundary value problem 
\begin{equation}\label{app-vw}
\left\{\begin{split}
\mathcal{L}_1^{(n)}(v,w)=\mathcal{F}^{(n)},\quad \mathscr{L}_2(v,w)=\mathfrak{f}^{(n)}&\mbox{ in }\Omega,\\
v=0,\quad \partial_r v=\mathfrak{g}^{(n)},\quad \partial_rw=0&\mbox{ on }\Gamma_{\rm en},\\
\partial_\phi v=0,\quad \partial_\phi w=0&\mbox{ on }\Gamma_{\rm w},\\
\partial_rw=0&\mbox{ on }\Gamma_{\rm ex}
\end{split}\right.
\end{equation}
has a unique axisymmetric solution $(v,w)\in[H_{\ast}^4(\Omega)]^2$ that satisfies 
\begin{equation}\label{vw0est}
\begin{split}
&\|v\|_{H^4_{\ast}(\Omega)}\le C\left(\|\mathcal{F}^{(n)}\|_{H^3_{\ast}(\Omega)}+\|\mathfrak{f}^{(n)}\|_{H^2(\Omega)}+\|\mathfrak{g}^{(n)}\|_{H^3({\Gamma_{\rm en}})}\right),\\
&\|w\|_{H^4_{\ast}(\Omega)}\le C\left(\|\mathcal{F}^{(n)}\|_{H^2(\Omega)}+\|\mathfrak{f}^{(n)}\|_{H^2(\Omega)}+\|\mathfrak{g}^{(n)}\|_{H^2({\Gamma_{\rm en}})}\right)
\end{split}
\end{equation}
for a constant $C>0$ depending only on the data and $L$.
\end{lemma}
%
For reader's convenience, the proof of Lemma \ref{Lemma27} is given later since it is not short. Assume that Lemma \ref{Lemma27} is true.


{\bf Step 2.} The well-posedness of \eqref{fix-lin-ho}: 
For each $n\in\mathbb{N}$, let $(v_n,w_n)\in [H^4_{\ast}(\Omega)]^2$ be the unique axisymmetric solution to \eqref{app-vw} obtained in Lemma \ref{Lemma27}. 
Then, by \eqref{vw0est} and the general Sobolev inequality, the sequences $\{(v_n,w_n)\}_{n\in\mathbb{N}}$ and $\{(\partial_rv_n,\partial_rw_n)\}_{n\in\mathbb{N}}$ are uniformly bounded in $C^{1,\frac{1}{2}}(\overline{\Omega})$.
The Arzel\'a-Ascoli theorem and the weak compactness property of $H^3(\Omega)$ imply there exist a subsequence $\{(v_{n_k},w_{n_k})\}$ and functions $(\chi_h,\Psi_h)$ so that 
\begin{equation*}
\begin{split}
&(v_{n_k},w_{n_k})\to(\chi_h,\Psi_h)\mbox{ and }(\partial_rv_{n_k},\partial_rw_{n_k})\to(\partial_r\chi_h,\partial_r\Psi_h)\mbox{ in }[ C^{1,\frac{1}{4}}(\overline{\Omega})]^2,\\
&(v_{n_k},w_{n_k})\rightharpoonup(\chi_h,\Psi_h)\mbox{ and }(\partial_rv_{n_k},\partial_rw_{n_k})\rightharpoonup(\partial_r\chi_h,\partial_r\Psi_h)\mbox{ in }[H^3(\Omega)]^2.
\end{split}
\end{equation*}
Then $(\chi_h,\Psi_h)$ is an axisymmetric strong solution to \eqref{fix-lin-ho} and satisfies 
\begin{equation}\label{H3-1}
\begin{split}
&\|\chi_h\|_{H^4_{\ast}(\Omega)}\le C\left(\|\mathcal{F}_h\|_{H^3_{\ast}(\Omega)}+\|\mathfrak{f}_h\|_{H^2(\Omega)}+\|\mathfrak{g}_h\|_{H^3({\Gamma_{\rm en}})}\right),\\
&\|\Psi_h\|_{H^4_{\ast}(\Omega)}\le C\left(\|\mathcal{F}_h\|_{H^2(\Omega)}+\|\mathfrak{f}_h\|_{H^2(\Omega)}+\|\mathfrak{g}_h\|_{H^2({\Gamma_{\rm en}})}\right)
\end{split}
\end{equation} 
for a constant $C$ depending only on the data and $L$.
One can easily check that the solution is unique by using a standard contraction argument if ${\bm\tau}_p\in(0,\tilde{\bm\tau}_{\flat}]$ for a sufficiently small constant $\tilde{\bm\tau}_\flat\in(0,{\bm\tau}^{\flat}]$ depending only on the data and $(L,\delta_1,\delta_2,\delta_3)$.
The proof of Lemma \ref{Lem-po} is completed.
\end{proof}

Now we prove Lemma \ref{Lemma27}.
\begin{proof}[Proof of Lemma \ref{Lemma27}]
The proof is divided into four steps.

{\bf Step 1.} Approximate Problems:
%
Let $\{\xi_k=\xi_k(\phi)\}_{k=0}^{\infty}$ be eigenfunctions with eigenvalues $\{\omega_k\}_{k=0}^{\infty}$ to the eigenvalue problem 
\begin{equation}\label{eigen}
\left\{\begin{split}
-\frac{1}{\sin\phi}(\sin\phi \xi')'=\omega\xi\quad&\mbox{ in }(0,\phi_0), \\
\xi'=0\quad&\mbox{ on }\{\phi=0,\phi_0\}.
\end{split}\right.
\end{equation}
Then
$\{\xi_k\}_{k=0}^{\infty}$ forms an orthonormal basis in $L^2((0,\phi_0);\sin\phi d\phi)$ with respect to the inner product defined by 
\begin{equation}\label{inner}
\langle\xi_j,\xi_k\rangle=\int_0^{\phi_0}\xi_j(\phi)\xi_k(\phi)\sin\phi\, d{\phi}.
\end{equation}

Let $(V_m,W_m)$ be of the form
\begin{equation}\label{VW-form}
V_m(r,\phi):=\sum_{j=0}^{m}v_j(r)\xi_j(\phi),
\quad 
W_m(r,\phi):=\sum_{j=0}^{m}w_j(r)\xi_j(\phi)
\end{equation}
for $(r,\phi)\in\mathcal{R}_L:=\{(r,\phi)\in\mathbb{R}^2: r_{\rm en}<r<r_{\rm ex},\,0<\phi<\phi_0\}$.
For each $m\in\mathbb{N}$, we seek a solution $(V_m, W_m)$ satisfying
\begin{equation}\label{VWMM}
\begin{split}
&\langle \mathcal{L}_1^{(n)}(V_m,W_m),\xi_k\rangle=\langle\mathcal{F}^{(n)},\xi_k \rangle,\\
&\langle\mathscr{L}_2(V_m,W_m),\xi_k\rangle=\langle \mathfrak{f}^{(n)},\xi_k\rangle\mbox{ for all }k=0,1,\ldots, m
\end{split}
\end{equation}
with boundary conditions
\begin{equation}\label{VW-BD}
\left\{\begin{split}
V_m=0,\,\, \partial_r V_m=\sum_{j=0}^{m}\langle \mathfrak{g}^{(n)},\xi_j\rangle\xi_j,\,\,\partial_r W_m=0\mbox{ on }\Gamma_{\rm en},\\
\partial_{\phi}V_m=0,\,\, \partial_{\phi}W_m=0\mbox{ on }\Gamma_{\rm w},\\
\partial_r W_m=0\mbox{ on }\Gamma_{\rm ex}.
\end{split}\right.
\end{equation}
%
%
As in the proof of Lemma \ref{lem-H1}, we get 
\begin{equation}\label{VW-H1}
\begin{split}
\|V_m\|_{H^1(\Omega)}&+\|W_m\|_{H^1(\Omega)}
\le C\left(\|\mathcal{F}^{(n)}\|_{L^2(\Omega)}+\|\mathfrak{f}^{(n)}\|_{L^2(\Omega)}+\|\mathfrak{g}^{(n)}\|_{L^2({\Gamma_{\rm en}})}\right)
\end{split}
\end{equation}
for a constant $C>0$ depending only on the data and $L$.

{\bf Step 2.} Analysis on ODE problems:  
We  rewrite \eqref{VWMM}-\eqref{VW-BD} as a boundary value problem for $\{(v_k,w_k)\}_{k=0}^{m}$ as follows:
\begin{equation}\label{vw-kk}
\left\{\begin{split}
v_k''+2\langle a_{12}^{(n)}\sum_{j=0}^m\partial_{\phi}\xi_j,\xi_k\rangle v_k'+\langle a_{22}^{(n)}\sum_{j=0}^m\partial_{\phi}^2\xi_j,\xi_k\rangle v_k
+ (a_1^{(n)}-\alpha_1)v_k'\\
+\langle a_2^{(n)}\sum_{j=0}^m\partial_{\phi}\xi_j,\xi_k\rangle v_k-b_1w_k'-cw_k=\langle\mathcal{F}^{(n)},\xi_k \rangle,\\
w_k''+\sum_{j=0}^m\langle \frac{1}{r^2}\partial_{\phi}^2\xi_j,\xi_k\rangle w_k+\frac{2}{r}w_k'+\sum_{j=0}^m\langle \frac{\cos\phi}{r^2\sin\phi}\partial_{\phi}\xi_j,\xi_k\rangle w_k\\
-g_0 w_k-h_1 v_k'=\langle \mathfrak{f}^{(n)},\xi_k\rangle,\\
v_k(r_{\rm en})=0,\, v_k'(r_{\rm en})=\langle \mathfrak{g}^{(n)},\xi_k\rangle, \,w_k'(r_{\rm en})=0,\, w_k'(r_{\rm ex})=0.
\end{split}\right.
\end{equation}
By using the Fredholm alternative theorem and the bootstrap argument, one can prove that 
the boundary value problem \eqref{vw-kk} for $\{(v_k,w_k)\}_{k=0}^m$ has a unique smooth solution on the interval $[r_{\rm en},r_{\rm ex}]$.

{\bf Step 3.} Estimate for $(V_m,W_m)$:
\begin{lemma}\label{lem28}
Fix $n\in\mathbb{N}$.
For each $m\in\mathbb{N}$, let $(V_m,W_m)$ given in the form \eqref{VW-form} be the solution to \eqref{VWMM}-\eqref{VW-BD}. Then $(V_m,W_m)$ satisfies  
\begin{equation}\label{lem28-est}
\begin{split}
&\|V_m\|_{H^4_{\ast}(\Omega)}\le C\left(\|\mathcal{F}^{(n)}\|_{H^3_{\ast}(\Omega)}+\|\mathfrak{f}^{(n)}\|_{H^2(\Omega)}+\|\mathfrak{g}^{(n)}\|_{H^3({\Gamma_{\rm en}})}\right),\\
&\|W_m\|_{H^4_{\ast}(\Omega)}\le C\left(\|\mathcal{F}^{(n)}\|_{H^2(\Omega)}+\|\mathfrak{f}^{(n)}\|_{H^2(\Omega)}+\|\mathfrak{g}^{(n)}\|_{H^2({\Gamma_{\rm en}})}\right)
\end{split}
\end{equation}
for a constant $C>0$ depending only on the data and $L$.
The estimate constant $C$ is independent of $n,m\in\mathbb{N}$ and $(S_{\ast},\frac{\Lambda_{\ast}}{r\sin\phi}{\bf e}_{\theta},\chi_{\ast},\Psi_{\ast},{\bf W}_{\ast})\in\mathcal{J}(\delta_1)\times\mathcal{J}(\delta_2)\times\mathcal{J}(\delta_3)$.
\end{lemma}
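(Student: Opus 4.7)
The plan is to iterate the energy argument of Lemma \ref{lem-H1}, exploiting the fact that the first equation in \eqref{vw-kk} is hyperbolic with $r$ playing the role of the time-like variable (since $\bar{a}_{22}$ stays uniformly bounded away from zero by Lemma \ref{Lem51}(b)), while the second equation $\mathscr{L}_2$ is uniformly elliptic on $\Omega$. Once an energy estimate controls $\partial_r^{2} V_m$ in $L^2$, the first equation can be solved algebraically for the missing $\partial_\phi^{2} V_m$; likewise elliptic regularity immediately recovers $\partial_\phi^{2} W_m$ from $\partial_r^{2} W_m$ and lower-order terms. Differentiating in $r$ and iterating this trade yields $H^k_\ast$ bounds for every $k \leq 4$.

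Concretely, for $V_m$ I would differentiate the first equation of \eqref{vw-kk} in $r$ up to three times and repeat the weighted energy argument of Lemma \ref{lem-H1} with multipliers $\mathcal{W}(r)\partial_r^{j+1}V_m$ and $\partial_r^{j} W_m$; each iteration yields an $H^{j+1}(\Omega)$ bound for the radial derivatives $(\partial_r^{j+1}V_m, \partial_r^{j+1}W_m)$ in terms of the data, and the principal part of $\mathcal{L}_1^{(n)}$ then converts the remaining $\partial_\phi^{j+1}V_m$ into controlled quantities via its hyperbolicity. For $W_m$, since the boundary conditions $\partial_r W_m = 0$ on $\Gamma_{\rm en} \cup \Gamma_{\rm ex}$ and $\partial_\phi W_m = 0$ on $\Gamma_{\rm w}$ form a pure oblique-Neumann system (and after reflecting across $\Gamma_{\rm w}$ to smooth the corners at $\Gamma_{\rm w} \cap (\Gamma_{\rm en} \cup \Gamma_{\rm ex})$, as suggested in the Remark following Theorem \ref{Thm2.2}), the standard $H^k$-regularity theory for elliptic equations applied to $\mathscr{L}_2(V_m, W_m) = \langle\mathfrak{f}^{(n)},\xi_k\rangle$ yields $\|W_m\|_{H^{j+2}_\ast} \le C(\|\mathfrak{f}^{(n)}\|_{H^{j}} + \|h_1\partial_r V_m\|_{H^{j}})$ up to lower-order terms. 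The two estimates close simultaneously because the coupling term $h_1\partial_r V_m$ in the second equation costs exactly one derivative, which is precisely why the final $V_m$-bound in \eqref{lem28-est} carries $\|\mathcal{F}^{(n)}\|_{H^3_\ast}$, one order higher than the $\|\mathcal{F}^{(n)}\|_{H^{2}}$ that suffices on the $W_m$-side.

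The principal obstacle is the coordinate singularity at the polar axis $\phi = 0$, where coefficients such as $a_2^{(n)}$ and $\mathfrak{a}_3$ from \eqref{a123} blow up owing to the $\cot\phi$ factor. The Galerkin basis $\{\xi_k\}$ from \eqref{eigen} is chosen precisely to cope with this: its Neumann condition $\xi_k'(0) = 0$ guarantees that $\cot\phi \cdot \xi_k'(\phi)$ extends continuously to $\phi = 0$, and the weighted inner product \eqref{inner} with measure $\sin\phi \, d\phi$ absorbs every apparent singularity arising from integration by parts in $\phi$. A secondary requirement is that $C$ be independent of both $m$ and $n$; this follows from the uniform $C^l(\overline{\Omega})$-convergence of the coefficient sequences $a_{ij}^{(n)}, a_i^{(n)}$ provided by the mollification in Step 1 together with Lemma \ref{Lem51}(a),(c), and from the fact that the Galerkin projection onto $\mathrm{span}\{\xi_0,\ldots,\xi_m\}$ is $L^2((0,\phi_0); \sin\phi\, d\phi)$-orthogonal, so it commutes with the energy identity up to commutator terms controlled by the lower-order bound \eqref{VW-H1}.
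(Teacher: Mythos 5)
Your high-level plan lines up with the paper's: iterate an energy estimate in $r$ for $V_m$ (treating $r$ as time-like), use elliptic regularity with reflection across $\Gamma_{\rm w}$ for $W_m$, and rely on the Neumann eigenbasis $\{\xi_k\}$ to absorb the polar-axis singularity. However, there is a genuine gap in the step that bounds the pure angular derivatives of $V_m$, and a related gap at the entrance boundary.

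You write that once $\partial_r^2 V_m$ is controlled, ``the first equation can be solved algebraically for the missing $\partial_\phi^2 V_m$,'' and more generally that the principal part of $\mathcal{L}_1^{(n)}$ converts $\partial_\phi^{j+1}V_m$ into controlled quantities. This presumes the equation $\mathcal{L}_1^{(n)}(V_m,W_m)=\mathcal{F}_m$ holds \emph{pointwise}. It does not: the Galerkin approximant only satisfies $\langle \mathcal{L}_1^{(n)}(V_m,W_m),\xi_k\rangle=\langle\mathcal{F}^{(n)},\xi_k\rangle$ for $k\le m$, and because $a_{22}^{(n)}$ depends on $\phi$, the operator $\mathcal{L}_1^{(n)}$ does not map $\mathrm{span}\{\xi_0,\dots,\xi_m\}$ into itself, so the residual $\mathcal{L}_1^{(n)}V_m-\mathcal{F}_m$ lives in the orthogonal complement and cannot simply be dropped. (For $W_m$ your argument is fine: the coefficients of $\mathscr{L}_2$ depend on $r$ alone, so the angular part of $\mathscr{L}_2 W_m$ stays in the span and the Galerkin equation is in fact pointwise, as in \eqref{Wm-eq}.) The paper circumvents this for $V_m$ with an eigenfunction-algebra argument: rewrite $\langle a_{22}^{(n)}\partial_\phi^2 V_m,\xi_k\rangle=\langle\mathfrak{R},\xi_k\rangle$, substitute $\xi_k''=-\omega_k\xi_k-\frac{\cos\phi}{\sin\phi}\xi_k'$ via the Sturm--Liouville equation to obtain \eqref{xi2}, multiply by $v_k$ and sum, and thereby produce the weighted quantity $-\int a_{22}^{(n)}(\partial_\phi^2V_m)^2\sin\phi\,d\phi$ directly. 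Nothing in your proposal replaces this device.

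A second gap concerns the entrance boundary term. After one (or more) $r$-differentiations, the boundary datum enters through $\int_{\Gamma_{\rm en}}|\partial_\phi^j q_m|^2\sin\phi\,d\phi$ with $q_m=\partial_r V_m=\sum_{j\le m}\langle\mathfrak{g}^{(n)},\xi_j\rangle\xi_j$ on $\Gamma_{\rm en}$. Controlling these sums with an $m$-independent constant by $\|\mathfrak{g}^{(n)}\|_{H^3(\Gamma_{\rm en})}$ is precisely what the Parseval-type identities \eqref{664}, \eqref{ggnn}, and \eqref{omega-ab} accomplish, using that $\{\xi_j'/\sqrt{\omega_j}\}$ is also orthonormal in $L^2((0,\phi_0);\sin\phi\,d\phi)$ and that $-\Delta_{\bf x}(\partial_{x_i}\xi_j)=\omega_j\,\partial_{x_i}(\xi_j/r^2)$. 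Your proposal never touches this, and the closing remark that ``the Galerkin projection commutes with the energy identity up to commutator terms controlled by \eqref{VW-H1}'' is not an adequate substitute: the energy identity is not run through a projection at all --- it is tested against multipliers like $\partial_r^{j+1}V_m$ that already lie in the Galerkin span --- and the $m$-uniformity comes from the orthogonality/Parseval identities above, not from a commutator estimate by the $H^1$ bound.
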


From now on, the constant $C$ depends only on the data and $L$, which may vary from line to line.

\begin{proof}[Proof of Lemma \ref{lem28}]
The proof is divided into three steps. 

{\bf Step 1.} ($H^2$ estimate of $(V_m,W_m)$)
Fix $m\in\mathbb{N}$ and set 
\begin{equation*}
\mathcal{F}_m:=\sum_{j=0}^m\langle\mathcal{F}^{(n)},\xi_j\rangle\xi_j,\quad \mathfrak{f}_m:=\sum_{j=0}^m\langle\mathfrak{f}^{(n)},\xi_j\rangle\xi_j.
\end{equation*}
Then it follows from \eqref{VWMM}-\eqref{VW-BD} that
\begin{equation}\label{Wm-eq}
\left\{\begin{split}
\partial_{rr}W_m+\frac{1}{r^2}\partial_{\phi\phi}W_m-g_0W_m=\mathfrak{f}_m^{\ast}\mbox{ in }\Omega,\\
\partial_rW_m=0\mbox{ on }\Gamma_{\rm en},\quad \partial_r W_m=0\mbox{ on }\Gamma_{\rm ex},\quad \partial_{\phi}W_m=0\mbox{ on }\Gamma_{\rm w}
\end{split}
\right.
\end{equation}
for 
\begin{equation}\label{fmstar}
\mathfrak{f}_{m}^{\ast}:=-\frac{2}{r}\partial_rW_m-\frac{\cos\phi}{r^2\sin\phi}\partial_{\phi}W_m+h_1\partial_r V_m+\mathfrak{f}_m.
\end{equation}
By using the method of reflection across the wall, we get
\begin{equation*}
\|W_m\|_{H^2(\Omega)}\le C\left(\|\mathfrak{f}^{(n)}\|_{L^2(\Omega)}+\|W_m\|_{H^1(\Omega)}+\|V_m\|_{H^1(\Omega)}\right).
\end{equation*}
It follows from \eqref{VW-H1} that 
\begin{equation}\label{W-H2}
\|W_m\|_{H^2(\Omega)}\le C\left(\|\mathcal{F}^{(n)}\|_{L^2(\Omega)}+\|\mathfrak{f}^{(n)}\|_{L^2(\Omega)}+\|\mathfrak{g}^{(n)}\|_{L^2({\Gamma_{\rm en}})}\right).
\end{equation}
 
Now we compute a $H^2$ estimate of $V_m$. From \eqref{VWMM}, we have
\begin{equation*}
\langle \mathcal{L}_1^{(n)}(V_m,W_m),\xi_k\rangle=\langle\mathcal{F}_m,\xi_k \rangle,\quad k=0,1,\ldots,m.
\end{equation*}
This can be rewritten as 
\begin{equation}\label{643}
\langle \mathcal{L}_{h}(V_m),\xi_k\rangle=\langle\mathfrak{F}_m+b_1\partial_rW_m+cW_m,\xi_k\rangle,\quad k=0,1,\ldots,m
\end{equation}
for $\mathcal{L}_h(V):=\sum_{i,j=1}^2 a_{ij}^{(n)}\partial_{ij}V+\sum_{i=1}^2(a_i^{(n)}-\alpha_i)\partial_iV$.
For notational simplicity, set 
$$q_m:=\partial_rV_m.$$
Differentiating \eqref{643} with respect to $r$ gives
\begin{equation}\label{qm-eq}
\begin{split}
\langle \mathcal{L}_{h}(q_m),\xi_k\rangle
=&\langle\partial_r(\mathfrak{F}_m+b_1\partial_rW_m+cW_m),\xi_k\rangle\\
&+\langle-2\partial_ra_{12}^{(n)}\partial_{\phi}q_m-\partial_ra_{22}^{(n)}\partial_{\phi}^2V_m-\sum_{i=1}^2\partial_r(a_i^{(n)}-\alpha_i)q_m,\xi_k\rangle.
\end{split}
\end{equation}
We multiply \eqref{qm-eq} by $v_k''$ for each $k=0,1,\ldots,m$, and add up the results over $k=0$ to $m$, finally integrate the summation with respect to $r$ on the interval $[r_{\rm en}, t]$ for $t$ varying in the interval $[r_{\rm en},r_{\rm ex}]$ to get 
\begin{equation}\label{A7}
\begin{split}
\int_{\mathcal{N}_t}\mathcal{L}_h(q_m)&\partial_r q_m (r^2\sin\phi) d{\bf y}\\
&=\int_{\mathcal{N}_t}\partial_r\left(\mathfrak{F}_m+b_1\partial_rW_m+cW_m\right)\partial_rq_m(r^2\sin\phi)d{\bf y}\\
&\quad+\int_{\mathcal{N}_t} \left(-2\partial_ra_{12}^{(n)}\partial_{\phi}q_m-\partial_ra_{22}^{(n)}\partial_{\phi}^2V_m\right)\partial_rq_m(r^2\sin\phi)d{\bf y}\\
&\quad+\int_{\mathcal{N}_t} \left(-\sum_{i=1}^2\partial_r(a_i^{(n)}-\alpha_i)q_m\right)\partial_rq_m(r^2\sin\phi)d{\bf y}
\end{split}
\end{equation}
for $\mathcal{N}_t:=\mathcal{N}_L\cap\{r<t\}$. 
By the definition of $\mathcal{L}_h(V)$, we obtain that 
\begin{equation*}
\begin{split}
&\mbox{LHS of \eqref{A7}}\\
&=\frac{1}{2}\left(\int_{\overline{\mathcal{N}_t}\cap\{r=t\}}-\int_{\overline{\mathcal{N}_t}\cap\{r=r_{\rm en}\}}\right)\left[(\partial_r q_m)^2-a_{22}^{(n)}(\partial_{\phi}q_m)^2\right](r^2\sin\phi)d\phi\\
&\quad-\int_{\mathcal{N}_t}\left[2r\sin\phi+2(\partial_{\phi}a_{12}^{(n)})r^2\sin\phi+2a_{12}^{(n)}r^2\cos\phi \right]\frac{(\partial_rq_m)^2}{2} d{\bf y}\\
&\quad-\int_{\mathcal{N}_t}\left[(\partial_{\phi}a_{22}^{(n)})(r^2\sin\phi)+a_{22}^{(n)}(r^2\cos\phi)\right](\partial_rq_m)(\partial_{\phi}q_m)d{\bf y}\\
&\quad+\int_{\mathcal{N}_t}\left[(\partial_ra_{22}^{(n)})r^2\sin\phi+2a_{22}^{(n)}r\sin\phi\right]\frac{(\partial_{\phi}q_m)^2}{2}d{\bf y}\\
&\quad+\int_{\mathcal{N}_t}\sum_{i=1}^2(a_i^{(n)}-\alpha_i)(\partial_i q_m)(\partial_rq_m)(r^2\sin\phi)d{\bf y}.
\end{split}
\end{equation*}
%
Since
\begin{equation*}
\begin{split}
\left|\frac{a_{12}^{(n)}}{\sin\phi}\right|+\left|\frac{a_{22}^{(n)}}{\sin\phi}\right|\le C,
\end{split}
\end{equation*}
 there exist positive constants $C_i$ ($i=1,2,3$) depending only on the data and $L$ such that 
\begin{equation}\label{A8}
\begin{split}
\mbox{LHS of \eqref{A7}}
\ge&\, C_1\int_{\overline{\mathcal{N}_t}\cap\{r=t\}}|Dq_m|^2(r^2\sin\phi)d\phi\\
&-C_2\int_{\overline{\mathcal{N}_t}\cap\{r=r_{\rm en}\}}|Dq_m|^2 (r^2\sin\phi)d\phi\\
&-C_3\int_{\mathcal{N}_t}|Dq_m|^2 (r^2\sin\phi)d{\bf y}.
\end{split}
\end{equation}
for $D:=\nabla_{(r,\phi)}$. 
It follows from \eqref{A7}-\eqref{A8} that 
\begin{equation}\label{Gron}
\begin{split}
&\int_{\overline{\mathcal{N}_t}\cap\{r=t\}}|Dq_m|^2(r^2\sin\phi) d\phi\\
&\le\, \frac{C_2}{C_1}\int_{\overline{\mathcal{N}_t}\cap\{r=r_{\rm en}\}}|Dq_m|^2 (r^2\sin\phi)d\phi
+ C\int_{\mathcal{N}_t}(|Dq_m|^2+|\partial_{\phi}^2V_m|^2)(r^2\sin\phi) d{\bf y}\\
&\quad+C\left(\|\mathcal{F}^{(n)}\|_{H^1(\Omega)}+\|\mathfrak{f}^{(n)}\|_{L^2(\Omega)}+\|\mathfrak{g}^{(n)}\|_{H^1({\Gamma_{\rm en}})}\right).
\end{split}
\end{equation}

Now we need to compute estimates of 
\begin{equation*}
\mbox{(i) }\int_{\overline{\mathcal{N}_t}\cap\{r=r_{\rm en}\}}|Dq_m|^2 (r^2\sin\phi)d\phi\quad\mbox{and}\quad
\mbox{(ii) }\int_{\mathcal{N}_t}|\partial_{\phi}^2V_m|^2(r^2\sin\phi)d{\bf y}.
\end{equation*}
First we compute (i). We have 
\begin{equation*}
\begin{split}
&\int_{\overline{\mathcal{N}_t}\cap\{r=r_{\rm en}\}}|\partial_{\phi}q_m|^2 (r^2\sin\phi)d\phi\\
&=r_{\rm en}^2\int_{\overline{\mathcal{N}_t}\cap\{r=r_{\rm en}\}}\left(\sum_{j=0}^m\langle\mathfrak{g}^{(n)},\xi_j \rangle \xi_j'(\phi)\right)^2(\sin\phi) d\phi
=r_{\rm en}^2 \sum_{j=0}^m\langle\mathfrak{g}^{(n)},\xi_j \rangle^2\omega_j\\
\end{split}
\end{equation*}
since
\begin{equation*}
\int_{\overline{\mathcal{N}_t}\cap\{r=r_{\rm en}\}}\langle\mathfrak{g}^{(n)},\xi_j \rangle\langle\mathfrak{g}^{(n)},\xi_k \rangle \xi_j'\xi_k' \sin\phi d\phi
=\left\{\begin{split}
	\langle\mathfrak{g}^{(n)},\xi_j \rangle^2\omega_j\quad &\mbox{for }j=k,\\
	0\quad&\mbox{otherwise.}
	\end{split}\right.
\end{equation*}
It follows from
\begin{equation}\label{664}
\begin{split}
\langle \mathfrak{g}^{(n)},\xi_j\rangle
&=\int_0^{\phi_0}\mathfrak{g}^{(n)}\xi_j\sin\phi d\phi=-\frac{1}{\omega_j}\int_0^{\phi}\mathfrak{g}^{(n)}(\sin\phi \xi_j')'d\phi\\
&=\frac{1}{\omega_j}\int_0^{\phi}(\mathfrak{g}^{(n)})'\sin\phi\xi_j' d\phi
=\frac{1}{\sqrt{\omega_j}}\langle(\mathfrak{g}^{(n)})',\frac{\xi_j'}{\sqrt{\omega_j}}\rangle
\end{split}
\end{equation}
that
\begin{equation*}
\begin{split}
\int_{\overline{\mathcal{N}_t}\cap\{r=r_{\rm en}\}}|\partial_{\phi}q_m|^2 (r^2\sin\phi)d\phi
=r_{\rm en}^2 \sum_{j=0}^m\langle(\mathfrak{g}^{(n)})',\frac{\xi_j'}{\sqrt{\omega_j}}\rangle^2.
\end{split}
\end{equation*}
Since the set $\{\frac{\xi_j'}{\sqrt{\omega_j}}\}_{j=1}^{\infty}$ forms an orthonormal basis in $L^2((0,\phi_0);\sin\phi d\phi)$, we have 
\begin{equation*}
\int_{\overline{\mathcal{N}_t}\cap\{r=r_{\rm en}\}}|\partial_{\phi} q_m|^2 (r^2\sin\phi)d\phi
\le \phi_0 r_{\rm en}^2 \|\mathfrak{g}^{(n)}\|_{H^1({\Gamma_{\rm en}})}^2.
\end{equation*}
For each $k=0,1,\ldots, m$,  multiplying \eqref{643} by $v_k''$ and summing up over $k=0$ to $m$ gives
\begin{equation*}
\begin{split}
&\int_{\overline{\mathcal{N}_t}\cap\{r=r_{\rm en}\}}\left(|\partial_rq_m|^2+2a_{12}^{(n)}(\partial_{\phi}q_m)(\partial_rq_m)+(a_1^{(n)}-\alpha_1)q_m\partial_rq_m\right)(\sin\phi) d\phi\\
&=\int_{\overline{\mathcal{N}_t}\cap\{r=r_{\rm en}\}}\left(\mathcal{F}^{(n)}+cW_m\right)\partial_rq_m (\sin\phi)d\phi.
\end{split}
\end{equation*}
By the Cauchy-Schwartz inequality, trace inequality, and \eqref{VW-H1}, we get 
\begin{equation}\label{H22-ent}
\begin{split}
&\int_{\overline{\mathcal{N}_t}\cap\{r=r_{\rm en}\}}|Dq_m|^2(r^2\sin\phi)d{\phi}\\
&\le C\left(\|\mathcal{F}^{(n)}\|_{H^1(\Omega)}+\|\mathfrak{f}^{(n)}\|_{L^2(\Omega)}+\|\mathfrak{g}^{(n)}\|_{H^1({\Gamma_{\rm en}})}\right).
\end{split}
\end{equation}

Now we compute (ii). 
For $k=0,1,\ldots,m$,
\eqref{643} can be rewritten as 
\begin{equation*}
\begin{split}
\langle a_{22}^{(n)}\partial_{\phi}^2V_m,\xi_k\rangle
=\langle& \mathfrak{R},\xi_k\rangle,
\end{split}
\end{equation*}
where $\mathfrak{R}$ is defined by 
\begin{equation*}
\mathfrak{R}:=-\partial_rq_m-2a_{12}^{(n)}\partial_\phi q_m-(a_1^{(n)}-\alpha_1)q_m-a_2^{(n)}\partial_\phi V_m
-\mathfrak{F}_m+b_1\partial_rW_m+cW_m.
\end{equation*}
It follows from \eqref{eigen} that 
\begin{equation}\label{xi2}
\langle a_{22}^{(n)}\partial_{\phi}^2V_m, \xi_k''\rangle=\langle \mathfrak{R}, \frac{\cos\phi}{\sin\phi}\xi_k'+\xi_k''\rangle-\langle a_{22}^{(n)}\partial_{\phi}^2V_m, \frac{\cos\phi}{\sin\phi}\xi_k'\rangle.
\end{equation}
By the definition of $\langle\cdot,\cdot\rangle$ and \eqref{xi2}, we have
\begin{equation*}
\begin{split}
&-\int_0^{\phi_0}a_{22}^{(n)}(\partial_{\phi}^2V_m)^2\sin\phi d\phi\\
&=-\sum_{k=0}^mv_k\int_0^{\phi_0}a_{22}^{(n)}\partial_{\phi}^2V_m \xi_k'' \sin\phi d\phi\\
&=-\sum_{k=0}^m v_k\int_0^{\phi_0}\mathfrak{R}\left( \frac{\cos\phi}{\sin\phi}\xi_k'+\xi_k''\right)\sin\phi d\phi+\sum_{k=0}^mv_k\int_0^{\phi_0}a_{22}^{(n)}\partial_{\phi}^2V_m\frac{\cos\phi}{\sin\phi} \xi_k'\sin\phi d\phi\\
&=-\int_0^{\phi_0}\mathfrak{R}(\partial_\phi V_m)\cos\phi+\mathfrak{R}(\partial_{\phi}^2V_m)\sin\phi d\phi+\int_0^{\phi_0}a_{22}^{(n)}(\partial_{\phi}^2V_m)(\partial_{\phi} V_m)\cos\phi d\phi\\
&=:-(A_1+A_2)+A_3.
\end{split}
\end{equation*}
By the Cauchy-Schwartz inequality and the axisymmetric properties, we have
\begin{equation*}
\begin{split}
|A_1|+|A_2|+|A_3|
\le&\,C\int_0^{\phi_0}\left(\mathfrak{R}^2+|\partial_{\phi}V_m|^2\right)\sin\phi d\phi\\
&+\epsilon\int_0^{\phi_0}|\partial_{\phi}^2V_m|^2\sin\phi d\phi
\end{split}
\end{equation*}
for a sufficiently small constant $\epsilon\in(0,\inf_{{\bf y}\in\overline{\mathcal{N}_L}}\left\{1,\frac{-a_{22}^{(n)}({\bf y})}{4}\right\})$ depending only on the data.
Then
\begin{equation}\label{phi22}
\int_{\mathcal{N}_t}|\partial_\phi^2V_m|^2 (r^2\sin\phi) d{\bf y}\le C\left(\|\mathcal{F}^{(n)}\|_{H^1(\Omega)}+\|\mathfrak{f}^{(n)}\|_{L^2(\Omega)}+\|\mathfrak{g}^{(n)}\|_{H^1({\Gamma_{\rm en}})}\right).
\end{equation}
It follows from \eqref{Gron}, \eqref{H22-ent}, and \eqref{phi22} that 
\begin{equation*}
\begin{split}
&\int_{\overline{\mathcal{N}_t}\cap\{r=t\}}|Dq_m|^2(r^2\sin\phi) d\phi\\
&\le C\int_{\mathcal{N}_t}|Dq_m|^2(r^2\sin\phi) d{\bf y}
+C\left(\|\mathcal{F}^{(n)}\|_{H^1(\Omega)}+\|\mathfrak{f}^{(n)}\|_{L^2(\Omega)}+\|\mathfrak{g}^{(n)}\|_{H^1({\Gamma_{\rm en}})}\right).
\end{split}
\end{equation*}
Applying the Gronwall's inequality with \eqref{phi22} gives
\begin{equation*}
\|V_m\|_{H^2(\Omega)}\le C\left(\|\mathcal{F}^{(n)}\|_{H^1(\Omega)}+\|\mathfrak{f}^{(n)}\|_{L^2(\Omega)}+\|\mathfrak{g}^{(n)}\|_{H^1({\Gamma_{\rm en}})}\right).
\end{equation*}

{\bf Step 2.} ($H^3$ estimate of $(V_m,W_m)$)
Back to \eqref{Wm-eq}.
Since $\mathfrak{f}_m^{\ast}$ is $H^1$ near the wall $\Gamma_{\rm w}$, we again apply the method of reflection across the wall $\Gamma_{\rm w}$ to get 
\begin{equation}\label{Wm-H3}
\|W_m\|_{H^3(\Omega)}\le 
C\left(\|\mathcal{F}^{(n)}\|_{H^1(\Omega)}+\|\mathfrak{f}^{(n)}\|_{H^1(\Omega)}+\|\mathfrak{g}^{(n)}\|_{H^1({\Gamma_{\rm en}})}\right).
\end{equation}
%
By using \eqref{664} and  $(\mathfrak{g}^{(n)})'(\phi_0)=0$, we have
\begin{equation}\label{ggnn}
\begin{split}
\langle\mathfrak{g}^{(n)},\xi_j\rangle
&=\left\langle \mathfrak{g}^{(n)},-\frac{(\sin\phi\xi_j')'}{\omega_j\sin\phi}\right\rangle
=\frac{1}{\omega_j}\left\langle (\mathfrak{g}^{(n)})',\xi_j'\right\rangle\\
&=-\frac{1}{\omega_j}\langle (\mathfrak{g}^{(n)})'',\xi_j\rangle-\frac{1}{\omega_j}\left\langle \frac{(\mathfrak{g}^{(n)})'\cos\phi}{\sin\phi},\xi_j\right\rangle.
\end{split}
\end{equation}
Since the equation in \eqref{eigen} is equivalent to 
\begin{equation}\label{delta-xi}
-\Delta_{\bf x}\xi=\frac{\omega}{r^2}\xi,
\end{equation}
it follows from \eqref{ggnn} and \eqref{delta-xi} that 
\begin{equation*}
\begin{split}
\langle \mathfrak{g}^{(n)},\xi_j\rangle\Delta_{\bf x}\xi_j
&=-\langle \mathfrak{g}^{(n)},\xi_j\rangle\frac{\omega_j}{r^2}\xi_j\\
&=\frac{1}{r^2}\langle (\mathfrak{g}^{(n)})'',\xi_j\rangle\xi_j+\frac{1}{r^2}\left\langle \frac{(\mathfrak{g}^{(n)})'\cos\phi}{\sin\phi},\xi_j\right\rangle\xi_j.
\end{split}
\end{equation*}
This implies that 
\begin{equation}\label{gc2}
\begin{split}
\int_{\Gamma_{\rm en}}(\partial_{\phi}^2\partial_r V_m)^2 \sin\phi d\phi
&=\int_{\Gamma_{\rm en}}\left(\sum_{j=0}^m\langle \mathfrak{g}^{(n)},\xi_j\rangle\xi_j''\right)^2\sin\phi d\phi\\
&\le C \|\mathfrak{g}^{(n)}\|_{H^2({\Gamma_{\rm en}})}^2.
\end{split}
\end{equation}
Similarly, since it holds that 
$\partial_\phi\mathcal{F}^{(n)}\equiv 0\mbox{ on }\Gamma_{\rm w},$ we have
\begin{equation}\label{FH2}
\|\mathcal{F}_m\|_{H^2(\Omega)}\le C \|\mathcal{F}^{(n)}\|_{H^2(\Omega)}.
\end{equation}
Applying the same way in Step 1 and using \eqref{gc2}-\eqref{FH2} imply
\begin{equation}\label{Vm-H3}
\|V_m\|_{H^3(\Omega)}\le C\left(\|\mathcal{F}^{(n)}\|_{H^2(\Omega)}+\|\mathfrak{f}^{(n)}\|_{H^1(\Omega)}+\|\mathfrak{g}^{(n)}\|_{H^2({\Gamma_{\rm en}})}\right).
\end{equation}

{\bf Step 3.} ($H^3$ estimate of $(\partial_rV_m,\partial_rW_m)$)
Differentiating \eqref{Wm-eq} with respect to $r$ gives 
\begin{equation*}
\left\{\begin{split}
\partial_{rr}(\partial_rW_m)+\frac{1}{r^2}\partial_{\phi\phi}(\partial_rW_m)-g_0(\partial_rW_m)=\mathfrak{R}_m\mbox{ in }\Omega,\\
\partial_rW_m=0\mbox{ on }\Gamma_{\rm en}\cup\Gamma_{\rm ex},\,\, \partial_{\phi}(\partial_rW_m)=0\mbox{ on }\Gamma_{\rm w}
\end{split}
\right.
\end{equation*}
for $\mathfrak{R}_m$ given by 
\begin{equation*}
\mathfrak{R}_m:=\frac{2}{r^3}\partial_{\phi\phi}W_m+g_0'W_m+\partial_r\mathfrak{f}_m^{\ast}\in H^1(\Omega).
\end{equation*}
%
As in Step 2, we use the method of reflection to get the estimate 
\begin{equation*}
\begin{split}
\|\partial_rW_m\|_{H^3(\Omega)}
\le &\,C\left(\|W_m\|_{H^3(\Omega)}+\|\partial_r\mathfrak{f}_m^{\ast}\|_{H^1(\Omega)}\right).
\end{split}
\end{equation*}
By  the definition of $\mathfrak{f}_m^{\ast}$ given in \eqref{fmstar}, we have
\begin{equation}\label{484}
\begin{split}
\|\partial_rW_m\|_{H^3(\Omega)}
\le &\,C\left(\|V_m\|_{H^3(\Omega)}+\|W_m\|_{H^3(\Omega)}+\|\partial_r\mathfrak{f}_m\|_{H^1(\Omega)}\right).
\end{split}
\end{equation}
By \eqref{Wm-H3}, \eqref{Vm-H3}, and \eqref{484}, we have 
\begin{equation*}
\begin{split}
&\|\partial_rW_m\|_{H^3(\Omega)}\\
&\le \,C\left(\|\mathcal{F}^{(n)}\|_{H^2(\Omega)}+\|\mathfrak{f}^{(n)}\|_{H^1(\Omega)}+\|\mathfrak{g}^{(n)}\|_{H^2({\Gamma_{\rm en}})}+\|\partial_r\mathfrak{f}^{(n)}\|_{H^1(\Omega)}\right).
\end{split}
\end{equation*}


To compute a $H^3$ estimate of $q_m(=\partial_rV_m)$, we note that 
\begin{equation}\label{omega-ab}
-\Delta_{\bf x}(\partial_{x_i}\xi)
=\omega\partial_{x_i}\left(\frac{\xi}{r^2}\right)=\omega a_i\xi+\omega b_i\xi'
\end{equation}
for $a_i$ and $b_i$ ($i=1,2,3$) given by 
\begin{equation*}
\left\{\begin{split}
&a_1:=-\frac{2}{r^3}\sin\phi\cos\theta,\quad b_1:=\frac{1}{r^3}\cos\phi\cos\theta,\\
&a_2:=-\frac{2}{r^3}\sin\phi\sin\phi,\quad b_2:=\frac{1}{r^3}\cos\phi\sin\theta,\\
&a_3:=-\frac{2}{r^3}\cos\phi,\quad b_3:=-\frac{1}{r^3}\sin\phi.
\end{split}\right.
\end{equation*}
Then, by \eqref{664}, \eqref{ggnn} and \eqref{omega-ab}, we have
\begin{equation}\label{xi-eq1}
\begin{split}
\langle \mathfrak{g}^{(n)},\xi_j\rangle\Delta_{\bf x}(\partial_{x_i}\xi_j)
&=-\langle \mathfrak{g}^{(n)},\xi_j\rangle\omega_j a_i\xi_j-\langle \mathfrak{g}^{(n)},\xi_j\rangle\omega_j b_i\xi_j'\\
&=\langle (\mathfrak{g}^{(n)})'',\xi_j\rangle a_i\xi_j+\left\langle\frac{(\mathfrak{g}^{(n)})'\cos\phi}{\sin\phi},\xi_j
\right\rangle a_i\xi_j\\
&\quad+\langle (\mathfrak{g}^{(n)})'',\xi_j\rangle b_i\xi_j'+\left\langle\frac{(\mathfrak{g}^{(n)})'\cos\phi}{\sin\phi},\xi_j\right\rangle b_i\xi_j'.
\end{split}
\end{equation}
Similarly to \eqref{664}, we get
\begin{equation}\label{xi-eq2}
\begin{split}
&\langle (\mathfrak{g}^{(n)})'',\xi_j\rangle b_i\xi_j'+\left\langle\frac{(\mathfrak{g}^{(n)})'\cos\phi}{\sin\phi},\xi_j\right\rangle b_i\xi_j'\\
&=\frac{1}{\sqrt{\omega_j}}\left\langle (\mathfrak{g}^{(n)})''',\frac{\xi_j'}{\sqrt{\omega_j}}\right\rangle b_i\xi_j'+\frac{1}{\sqrt{\omega_j}}\left\langle\left(\frac{(\mathfrak{g}^{(n)})'\cos\phi}{\sin\phi}\right)',\frac{\xi_j'}{\sqrt{\omega_j}}\right\rangle b_i\xi_j'.
\end{split}
\end{equation}
By using \eqref{xi-eq1}-\eqref{xi-eq2}, we get 
\begin{equation*}
\begin{split}
\int_{\Gamma_{\rm en}}(\partial_{\phi}^3\partial_r V_m)^2 \sin\phi d\phi
&=\int_{\Gamma_{\rm en}}\left(\sum_{j=0}^m\langle \mathfrak{g}^{(n)},\xi_j\rangle\xi_j'''\right)^2\sin\phi d\phi\\
&\le C \|\mathfrak{g}^{(n)}\|_{H^3({\Gamma_{\rm en}})}^2.
\end{split}
\end{equation*}
Applying the method to obtain the $H^3$ estimate of $V_m$ in Steps 1 and 2 gives
\begin{equation*}
\|\partial_rV_m\|_{H^3(\Omega)}\le C\left(\|\partial_r\mathcal{F}^{(n)}\|_{H^2(\Omega)}+\|\partial_r\mathfrak{f}^{(n)}\|_{H^1(\Omega)}+\|\mathfrak{g}^{(n)}\|_{H^3({\Gamma_{\rm en}})}\right).
\end{equation*}
The proof of Lemma \ref{lem28} is completed.\end{proof}

{\bf Step 4.} Taking the limit: 
For a fixed $n\in\mathbb{N}$, let $(V_m,W_m)$ given in the form \eqref{VW-form} be the solution to \eqref{VWMM}-\eqref{VW-BD}. 
By Lemma \ref{lem28}, the solution satisfies \eqref{lem28-est}.
The general Sobolev inequality implies that $\{(V_m,W_m)\}_{m\in\mathbb{N}}$ and $\{(\partial_rV_m,\partial_rW_m)\}_{m\in\mathbb{N}}$ are bounded in $[C^{1,\frac{1}{2}}(\overline{\Omega})]^2$.
Then there exists a subsequence $\{(V_{m_k},W_{m_k})\}_{k\in\mathbb{N}}$ so that the limit $(v,w)\in [H^4_{\ast}({\Omega})\cap C^{1,\frac{1}{2}}(\overline{\Omega})]^2$ of the subsequence is a strong solution to \eqref{app-vw}.
The estimate \eqref{vw0est} can be obtained from the estimate \eqref{lem28-est} with the weak $H^3$ convergence of $\{(V_{m_k},W_{m_k})\}_{k\in\mathbb{N}}$ and $\{(\partial_rV_{m_k},\partial_rW_{m_k})\}_{k\in\mathbb{N}}$.
Also, one can see that the solution is unique if ${\bm\tau}_p\in(0,{\bm\tau}^{\flat}]$ for a sufficiently small ${\bm\tau}^{\flat}\in(0,{\bm\tau_{\flat}}]$
depending only on the data and $(L,\delta_1,\delta_2,\delta_3)$
by using a standard contraction argument.
The proof of Lemma \ref{Lemma27} is completed.
\end{proof}

Back to the proof of Lemma \ref{lema-po}.
Let $(\chi_h,\Psi_{h})$ be the solution obtained in Lemma \ref{Lem-po}.
Then the boundary value problem \eqref{lin-pps-bd}-\eqref{fix-lin-pps}  has a unique solution $(\chi:=\chi_h+\chi_{bd},\Psi:=\Psi_h+\Psi_{bd})$ that satisfies 
\begin{equation}\label{fix-est-po}
\begin{split}
&\|\chi\|_{H_{\ast}^4(\Omega)}+\|\Psi\|_{H_{\ast}^4(\Omega)}\\
&\le C\left(\|S_{\ast}-S_0\|_{H_{\ast}^4(\Omega)}+\left\|\frac{\Lambda_{\ast}}{r\sin\phi}{\bf e}_{\theta}\right\|_{H_{\ast}^3(\Omega;\mathbb{R}^3)}+\|\psi_{\ast}{\bf e}_{\theta}\|_{H_{\ast}^5(\Omega;\mathbb{R}^3)}+{\bm\tau}_p\right)\\
&\quad+C\left(\|\chi_{\ast}\|_{H_{\ast}^4(\Omega)}^2+\|\Psi_{\ast}\|^2_{H_{\ast}^4(\Omega)}\right)\\
&\le C_{\sharp}\left((1+\delta_1+\delta_3){\bm\tau}_p+(\delta_2{\bm\tau}_p)^2\right)
\end{split}
\end{equation}
for a constant $C_{\sharp}>0$ depending only on the data and $L$.
For the unique solution $(\chi,\Psi)$ of \eqref{lin-pps-bd}-\eqref{fix-lin-pps} associated with $(\chi_{\ast},\Psi_{\ast})\in\mathcal{J}(\delta_2)$, we define an iteration mapping $\mathcal{I}_p:\mathcal{J}(\delta_2)\to[H_{\ast}^4(\Omega)]^2$ by 
\begin{equation*}
\mathcal{I}_p(\chi_{\ast},\Psi_{\ast})=(\chi,\Psi).
\end{equation*}
If we choose $\delta_2$ and ${\bm\tau}_2$ satisfying
\begin{equation*}
2C_{\sharp}(1+\delta_1+\delta_3)\le\delta_2\mbox{ and }
{\bm\tau}_2\le \frac{1}{2C_{\sharp}\delta_2},
\end{equation*}
then  $\mathcal{I}_p$ maps $\mathcal{J}(\delta_2)$ into itself.

Applying the Schauder fixed point theorem gives the mapping $\mathcal{I}_p$ has a fixed point $(\chi_{\sharp},\Psi_{\sharp})\in\mathcal{J}(\delta_2)$. 
Obviously, the fixed point $(\chi_{\sharp},\Psi_{\sharp})$ satisfies \eqref{pp-prob-sup} and \eqref{po=est}.
By applying the standard contraction argument,
one can prove that there exists a sufficiently small constant ${\bm\tau}_2\in(0,\frac{1}{2C_{\sharp}\delta_2}]$ depending only on the data and $(L, \delta_1,\delta_2,\delta_3)$ so that the solution is unique.
This finishes the proof of Lemma \ref{lema-po}.\qed


\bigskip
\noindent
{\bf Acknowledgements} \,\,
The research of Hyangdong Park was supported in part by the POSCO Science Fellowship of POSCO TJ Park Foundation and a KIAS Individual Grant (MG086702) at Korea Institute for Advanced Study. 

\bigskip
\noindent
{\bf Conflict of interests}\,\,
There is no conflict of interest.

\bigskip
\noindent
{\bf Data availability statement}\,\,
Data sharing not applicable to this article as no datasets were generated or analyzed during the current study.

\bibliographystyle{siam}
\bibliography{References}

\end{document}